\documentclass[12pt]{article}
\usepackage{amsmath,amssymb,amsthm}
\usepackage{amsfonts}
\usepackage{graphicx}
 
\newtheorem{Theorem}{Theorem}[section]
\newtheorem{Lemma}[Theorem]{Lemma} 
\newtheorem{Proposition}[Theorem]{Proposition}
\newtheorem{Corollary}[Theorem]{Corollary}
\theoremstyle{definition}
\newtheorem{Definition}[Theorem]{Definition}

\def\C{\mathbb{C}} 
\def\ei{\mathbf{i}}
\def\ej{\mathbf{j}}
\def\ek{\mathbf{k}}
\def\D{\mathcal{D}}

\def\H{\mathbb{H}}
\def\M{\mathcal{M}} 
\def\R{\mathbb{R}} 
 
\def\L{\mathcal{L}} 
\def\S{\mathcal{S}} 
\def\Mat{M}
\DeclareMathOperator{\Ker}{Ker}

\DeclareMathOperator{\ce}{ce}
\DeclareMathOperator{\se}{se}
\DeclareMathOperator{\Ce}{Ce}
\DeclareMathOperator{\Se}{Se}
\DeclareMathOperator{\Sc}{Sc} 
\DeclareMathOperator{\VEC}{Vec} 
\DeclareMathOperator{\re}{Re} 
\DeclareMathOperator{\im}{Im} 
 
\newcommand{\pic}[4]{\vspace{1ex}\setlength{\unitlength}{.05\textwidth}
\begin{picture}(0,0)(0,0)
\put(#2){\includegraphics[#4]{#1.jpg}} 
\end{picture}\vspace*{#3cm}\vspace{1ex}} 

\begin{document}

\begin{center}
 {\Large Reduced-quaternionic Mathieu functions, time-dependent Moisil-Teodorescu operators, and the imaginary-time wave equation}

\medskip
\medskip

Jo\~{a}o Morais\\
R. Michael Porter

 \end{center}

\begin{abstract}
We construct a one-parameter family of generalized Mathieu
  functions, which are reduced quaternion-valued functions of a pair
  of real variables lying in an ellipse, and which we call
  $\lambda$-reduced quaternionic Mathieu functions.  We prove that the
  $\lambda$-RQM functions, which are in the kernel of the
  Moisil-Teodorescu operator $D+\lambda$ ($D$ is the Dirac operator
  and $\lambda\in\R\setminus\{0\}$), form a complete orthogonal system
  in the Hilbert space of square-integrable $\lambda$-metamonogenic
  functions with respect to the $L^2$-norm over confocal
  ellipses. Further, we introduce the zero-boundary
  $\lambda$-RQM-functions, which are $\lambda$-RQM functions whose
  scalar part vanishes on the boundary of the ellipse. The limiting
  values of the $\lambda$-RQM functions as the eccentricity of the
  ellipse tends to zero are expressed in terms of Bessel functions of
  the first kind and form a complete orthogonal system for
  $\lambda$-metamonogenic functions with respect to the $L^2$-norm on
  the unit disk.  A connection between the $\lambda$-RQM functions and
  the time-dependent solutions of the imaginary-time wave equation in
  the elliptical coordinate system is shown.
\end{abstract}

\noindent \textbf{Mathematics Subject Classification (2010).} Primary 30G35; Secondary 33E10, 34B30, 42C30, 35J05, 35Q41.

\medskip

\noindent \textbf{Keywords.} Quaternionic analysis, elliptical coordinates, Mathieu functions, Bessel functions, quaternionic functions, Moisil-Teodorescu operators, imaginary-time wave equation. 


\section{Introduction}

Hypercomplex analysis is concerned with the study of elliptic partial
differential operators of the form
$\sum_{i=0}^m e_i\,\partial/\partial x_i$ defined on functions taking
values in a Clifford algebra over the Euclidean space $\R^{m+1}$ with
ideal units $e_i$. W.\ Hamilton introduced such an operator in the
context of quaternions in \cite{WRH1853} and it was studied
extensively by R.\ Fueter \cite{Fueter1940,Fueter1949}. Functions that
are annihilated by this operator are called monogenic (or regular, or
holomorphic, or hyperholomorphic).  One may study a large variety of
versions of $\R^{m+1}$-valued (paravector) or $\R^{m}$-valued (vector)
monogenic functions defined in domains in $\R^d$ of diverse dimensions
$d$
\cite{Abul-EzConstales1990,BrackxDelangheSommen1982,CacaoFalcaoMalonek2011,Delanghe2007,Delanghe2009,GuerlebeckHabethaSproessig2008,GuerlebeckHabethaSproessig2016,MoraisHabilitation2019};
each case has its particularities.

Let us consider $d=2$ and the first-order partial differential quaternionic operator
\begin{equation*}
D = \ei\frac{\partial}{\partial x} + \ej\frac{\partial}{\partial y}
\end{equation*}
in planar domains. The Euclidean space $\R^3$ can be naturally
embedded in the quaternions to form ``reduced quaternions'' in the
real linear span of $1$, $\ei$, and $\ej$
\cite{MoraisHabilitation2019}. In \cite{LuPeRoSha2013}, a detailed
investigation of the Moisil-Teodorescu operator $D+\lambda$ ($\lambda\in\R-\{0\}$) was
carried out for reduced-quaternion-valued functions defined in
elliptical domains. The natural representation of $D$ in elliptical
coordinates gave rise to an elliptical version of the Cauchy kernel
for $D+\lambda$ ($\lambda$-metamonogenic functions) and related
fundamental integral formulas. It was pointed out that one may obtain
$\lambda$-metamonogenic functions of two variables as specific
products of pairs of Mathieu functions of the elliptical coordinates.
As was shown in \cite{LuPeRoSha2013}, there is an intimate relation
between $D+\lambda$ and the Helmholtz operator
$\Delta+\lambda^2$. Applications of the theory of metamonogenic
functions can be found in \cite{Morais2014} and
\cite{MoraisRosaKou2015}.

Here we develop this idea further. In Section
\ref{Quaternionic_Mathieu_functions}, we give a complete orthogonal
system of reduced-quaternion $\lambda$-metamonogenic functions in an
elliptical domain. The basis elements are of the particular form
described in \cite{LuPeRoSha2013}, but special attention must be given
to the underlying parameters. We call them $\lambda$-reduced
quaternionic Mathieu functions ($\lambda$-RQM functions) and denote
them $\Mat^{\pm}_n[\lambda]$. Section
\ref{Zero-boundary_RQM-functions} introduces the zero-boundary
$\lambda$-RQM-functions, which are $\lambda$-RQM functions whose real part vanishes
on the boundary of the ellipse of given eccentricity $\mu$. We prove
that they form a complete orthogonal system in the Hilbert space of
$\lambda$-metamonogenic functions with respect to the $L^2$-norm over
ellipses. The limiting values of the $\lambda$-RQM functions as
$\mu \to 0$ are expressed in terms of Bessel functions of the first
kind and form a complete orthogonal system for $\lambda$-metamonogenic
functions with respect to the $L^2$-norm on the unit disk, which is
also new. We also present numerical results consistent with the
theoretical analysis. As an application, we explain a connection
between the $\lambda$-RQM functions and the time-dependent solutions
of the imaginary-time wave equation in the elliptical coordinate
system.

We have relied heavily on \cite{McLachlan1951} as our primary
reference for Mathieu functions. While this text provides a vast
amount of information, culled from a series of articles in The London,
Edinburgh, and Dublin Philosophical Magazine and Journal of Science
from 1945 to 1951, it uses some confusing and inconsistent notation,
and the order of the topics and the mathematical rigor are rather
loose. Fortunately, several expositions have recently appeared,
notably \cite{ArfkenWeberHarris2013,BrCoZa2021} that clarify the
development of the fundamental properties of Mathieu
functions. However, as far as we know, the specific parameters in
$Z^{\pm}_{n,m}[\mu](\xi,\eta)$ for $\mu>0$ and the limit
$Z^{\pm}_{n,m}[0](x,y)$ are not easily found in the literature. We
hope that the results presented in Sections \ref{sec:preliminaries}
and \ref{Quaternionic_Mathieu_functions} regarding the completeness as
well as the limiting cases of Mathieu functions in terms of Bessel
functions will help clarify these issues for those interested in
special functions, even those who may not be concerned with the
quaternionic theory.


\section{Preliminaries\label{sec:preliminaries}} 

\subsection{Elliptical coordinates\label{subsec:ellipcoord}} 

Mathieu functions originated in the expression of operators involving
the two-dimensional Laplacian $\Delta=\partial^2/\partial x^2+\partial^2/\partial y^2$
in an equivalent formulation in \textit{elliptical coordinates}
\begin{equation} \label{eq:coordinates} x=\cosh \xi\cos \eta ,
\quad y=\sinh \xi\sin \eta ,
\end{equation}
where $\xi\ge0$ and $0 \le \eta \le 2\pi$, which parametrizes points
$(x,y)\in\R^2$. We will write
\begin{equation} \label{eq:Rectangle}
 R_{\xi_0} = [0,\xi_0) \times [0,2\pi]
\end{equation}
for the set of parameters corresponding to the open elliptical domain
\begin{equation}
  \Omega_{\xi_0} = \{(x, y) \in \R^2 \colon (\xi, \eta) \in R_{\xi_0} \},
\end{equation}
whose major and minor axes are of length
$2\cosh\xi_0$ and $2\sinh\xi_0$, respectively. We will freely identify
points $(x,y)\in\R^2$ with complex numbers $z\in\C$ and thus consider
$\Omega_{\xi_0} \subseteq \C$.

Note that points $(x,0)\in\R^2$ have a double representation
corresponding to $\xi=0$, namely $x=\cos\eta=\cos(2\pi-\eta)$, where
for small $\epsilon>0$, $(x,\epsilon)$ has coordinate $0\le\eta<\pi$
while a nearby $(x,-\epsilon)$ has coordinate $\pi\le\eta\le2\pi$.
Thus a differentiable real-valued function $F(x,y)$ defined in a
neighborhood of the segment $[-1,1]\times\{0\}$ corresponds via
\eqref{eq:coordinates} to a function $f(\xi,\eta)$ in a rectangle
$[0,\epsilon) \times [0,2\pi]$ satisfying the following properties:
 \begin{itemize}
\item[(a)] \textit{periodicity:} $f(\xi,0) = f(\xi,2\pi)$, 
\item[(b)] \textit{continuity of  displacement:}
  $f(0,\eta) = f(0,2\pi-\eta)$, and
\item[(c)] \textit{continuity of gradient:}
$\displaystyle \frac{\partial f}{\partial \xi}(0,\eta) =
-\frac{\partial f}{\partial \xi}(0,2\pi-\eta)$.
\end{itemize}

\begin{Definition} \label{defi:S}
We denote by $\S(\xi_0)$ the set of all $C^1$ functions in the
half-open rectangle $R_{\xi_0}$ satisfying the symmetries (a), (b), and
(c).  
 \end{Definition}

For functions whose first partial derivatives with respect to
$\xi$ and $\eta$ lie in  $\S(\xi_0)$, the Laplace operator
is expressed in elliptical coordinates by
\eqref{eq:coordinates} as
\begin{equation} \label{eq:defLmu}
\L = \frac{2 }{(\cosh 2\xi -
\cos 2\eta)}\big(
\frac{\partial^2}{\partial\xi^2}+\frac{\partial^2}{\partial\eta^2}
\big).
 \end{equation}
\subsection{Mathieu functions}

Mathieu functions were introduced in this Journal in \cite{Mathieu1868}. We summarize their basic properties following
the construction of McLachlan \cite{McLachlan1951}. For $q,a\in\R$, define $\phi^\pm(\eta)=\phi^\pm(\eta;\,q,a)$, $\psi^\pm(\xi)=\psi^\pm(\xi;\,q,a)$ as solutions of
the second-order ordinary differential equations
\begin{align}
  \phi''(\eta) + (a-2q\cos2\eta)\,\phi(\eta)=  0, \label{eq:mathieu1} \\[0.5ex]
  \psi''(\xi) - (a-2q\cosh2\xi)\,\psi(\xi)=  0, \label{eq:mathieu2}
\end{align}
where $\phi^+, \psi^+$ are even and $\phi^-, \psi^-$ are odd.
One combines the equation \eqref{eq:mathieu1} with either
\begin{align} \label{Boundary_condition1}
&  \phi^+(0) = \phi^+(2\pi),\quad (\phi^+{})^{\prime}(0) = (\phi^+{})^{\prime}(\pi) = 0,  
\end{align}
or
\begin{align} \label{Boundary_condition2}
& \phi^-(0) = \phi^-(2\pi),\quad \phi^-{}(0) = \phi^-{}(\pi) = 0,
\end{align}
to give a boundary-value problem in which the quantity $a$ is regarded
as an eigenvalue. These problems determine the
increasing sequences of real eigenvalues $a_n(q)$ or $b_n(q)$ (\textit{Mathieu characteristics}) indexed by $n=0,1,\dots$. The (angular) Mathieu functions are
then defined as
\begin{equation} \label{Mathieu_functions_trigonometric}
\ce_n(\eta,q) = \phi^+(\eta;\,q,a_n(q)), \quad \se_n(\eta,q) = \phi^-(\eta;\,q,b_n(q)),
\end{equation}
and the modified (or radial) Mathieu functions are
\begin{equation} \label{Mathieu_functions_hyperbolic}
\Ce_n(\xi,q) = \ce_n(i\xi,q), \quad \Se_n(\xi,q) = -i\se_n(i\xi,q).
\end{equation}
By the boundary conditions
\eqref{Boundary_condition1}-\eqref{Boundary_condition2}, $\ce_n$ is
even, $\se_n$ is odd, and both are $2\pi$-periodic for every fixed
$q$. We normalize these eigenfunctions so that the $L^2$-norms of
$\ce_n$ and $\se_n$ over the interval $[0,2\pi]$ are equal to
$\sqrt{\pi}$ except that $\ce_0 \equiv 1$ (cf.\ the discussion in
\cite{McLachlan1951} of why it is preferable not to choose certain
Fourier coefficients to be equal to $1$ as is done in
\cite{WhittakerWatson1927}). Note that $\Ce_n(\xi,q)$ and
$\Se_n(\xi,q)$ are even and odd real-valued solutions of
\eqref{eq:mathieu2}, respectively. Further references on Mathieu
functions, including numerical aspects, are
\cite{Alhargan1996,Alhargan2001,Gutierrez-VegaIturbe-CastilloChavez-Cerda2000,HabashyKongChew1986,LinderFreese1994,MeixnerSchaefke1954,PhelpsHunter1965,Sarchinger1894,ToyaymaShogen1994}.

To unify the notation, we will write
\begin{align}  \label{eq:psiphi}
\psi^+_n[q](\xi)&=\Ce_n(\xi,q),\ \
  \psi^-_n[q](\xi)= \Se_n(\xi,q),
 \nonumber\\[0.25ex]
  \phi^+_n[q](\eta)&= \ce_n(\eta,q),\ \ \,
  \phi^-_n[q](\eta)=\se_n(\eta,q).
\end{align}

\begin{Definition}\label{def:zeta}  For $n \geq 0$, the even and odd
  \textit{two-dimensional Mathieu functions} corresponding to
  $q>0$ are the products
  $\zeta^\pm_n[q]$ given by
\begin{align*}
 \zeta^+_n[q](\xi,\eta) &= \psi^+_n[q](\xi) \,\phi^+_n[q](\eta),   \\[0.25ex]
  \zeta^-_n[q](\xi,\eta)&= \psi^-_n[q](\xi)\,\phi^-_n[q](\eta).  
\end{align*}
\end{Definition}

We never use $\psi^-_0$, $\phi^-_0$, $\zeta^-_0$ since they are
identically zero. It can be observed that $\psi^+_n[q] \, \phi^+_n[q]$
and $\psi^-_n[q]\,\phi^-_n[q]$ satisfy the conditions (a), (b), and
(c) of $\S_0(\xi_0)$ and are in fact the only possible products of
Mathieu and modified Mathieu functions satisfying these properties
simultaneously.

 These classical functions have been considered for example in
 \cite{ArfkenWeberHarris2013,McLachlan1945,McLachlan1947,McLachlan1951,Sato2006,Sato2010}. This
 is because the Mathieu equations arise from the Helmholtz equation
\begin{align}   \label{eq:helmholtz}
    \Delta F + \lambda^2 F = 0
\end{align}
by the method of separation of variables:

\begin{Proposition} \label{prop:zetahelmholtz}
  The two-dimensional Mathieu functions (with $q>0$) are solutions of the
  Helmholtz equation in elliptical coordinates, that is,
\begin{equation} \label{eq:mathieuelliptic}
\left(\L +  4q \right)\zeta^{\pm}_n[q] =0,
\end{equation}
where $\L$ is defined by \eqref{eq:defLmu}.
\end{Proposition}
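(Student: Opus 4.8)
The plan is to verify \eqref{eq:mathieuelliptic} by direct substitution, exploiting the fact that $\zeta^{\pm}_n[q]$ is a separated product $\psi^{\pm}_n[q](\xi)\,\phi^{\pm}_n[q](\eta)$ in which both factors are governed by one and the same Mathieu characteristic: $a_n(q)$ in the even case, $b_n(q)$ in the odd case. Writing $\zeta=\psi(\xi)\,\phi(\eta)$ and noting that no mixed derivative enters \eqref{eq:defLmu}, I would first record
\[
\frac{\partial^2\zeta}{\partial\xi^2}+\frac{\partial^2\zeta}{\partial\eta^2}
  =\psi''(\xi)\,\phi(\eta)+\psi(\xi)\,\phi''(\eta).
\]

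Next I would insert the defining equations \eqref{eq:mathieu1} and \eqref{eq:mathieu2} in the forms $\psi''(\xi)=(a-2q\cosh2\xi)\,\psi(\xi)$ and $\phi''(\eta)=-(a-2q\cos2\eta)\,\phi(\eta)$, with the common value $a=a_n(q)$ (respectively $b_n(q)$). The decisive cancellation is that the two terms carrying $a$ appear with opposite signs --- a direct consequence of the opposite signs of the potential terms in \eqref{eq:mathieu1} and \eqref{eq:mathieu2} --- so the $a$-dependence drops out and one is left with
\[
\frac{\partial^2\zeta}{\partial\xi^2}+\frac{\partial^2\zeta}{\partial\eta^2}
  =-2q\,(\cosh2\xi-\cos2\eta)\,\psi(\xi)\,\phi(\eta)
  =-2q\,(\cosh2\xi-\cos2\eta)\,\zeta.
\]
Multiplying by the prefactor $2/(\cosh2\xi-\cos2\eta)$ from \eqref{eq:defLmu}, the weight cancels exactly and yields $\L\zeta=-4q\,\zeta$, which is \eqref{eq:mathieuelliptic}.

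There is essentially no computational difficulty; the content of the proposition lies entirely in the bookkeeping above, and two points deserve care. The first is the insistence that the radial and angular factors share the same separation constant $a$ --- this is precisely why Definition \ref{def:zeta} pairs $\psi^+_n$ with $\phi^+_n$ and $\psi^-_n$ with $\phi^-_n$ rather than mixing indices; a mismatched pair would leave a residual term proportional to $(\cosh2\xi-\cos2\eta)^{-1}$ and so fail to solve \eqref{eq:mathieuelliptic}. The second, which I regard as the only genuinely subtle point, is the behavior at the foci $(\pm1,0)$, corresponding to $\xi=0$ and $\eta\in\{0,\pi,2\pi\}$, where the denominator $\cosh2\xi-\cos2\eta$ vanishes and the expression \eqref{eq:defLmu} for $\L$ is a priori singular. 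The computation resolves this automatically: the numerator $\partial_\xi^2\zeta+\partial_\eta^2\zeta$ carries the very same vanishing factor $(\cosh2\xi-\cos2\eta)$, so the apparent singularity is removable and the identity $\L\zeta=-4q\,\zeta$ extends by continuity across the foci. Finally, since the remark following Definition \ref{def:zeta} already certifies that $\zeta^{\pm}_n[q]$ obeys the symmetry conditions (a)--(c), the operator $\L$ applies to it in the sense intended by \eqref{eq:defLmu}, and the proof is complete.
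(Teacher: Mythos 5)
Your proof is correct: the cancellation of the common characteristic value $a_n(q)$ (resp.\ $b_n(q)$) between the angular equation \eqref{eq:mathieu1} and the radial equation \eqref{eq:mathieu2}, followed by the exact cancellation of the weight $(\cosh2\xi-\cos2\eta)$ against the prefactor in \eqref{eq:defLmu}, is precisely the classical separation-of-variables computation, and your added remarks --- that mismatched indices would leave a residual term, and that the apparent singularity at the foci is removable because the numerator carries the same vanishing factor --- are both sound. The paper states this proposition without proof, treating it as classical, so your argument supplies exactly the standard verification it implicitly relies on.
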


We have
\begin{Proposition} \label{prop:orthogonalmathieu} Let $q > 0$.  (i)
  The collection $\{\phi^\pm_n[q]\}_{n=0}^\infty$ is a complete subset
  of $C^0[0,2\pi]$ in the sense of uniform convergence and is a
  complete orthogonal system in $L^2([0,2\pi])$.  \\(ii) For every
  $\xi_0>0$, the collection $\{\psi^\pm_n[q]\}_{n=0}^\infty$ is a
  complete subset of $C^0[0,\xi_0]$ in the sense of uniform
  convergence and is a complete (nonorthogonal) system in
  $L^2([0,\xi_0])$. 
\end{Proposition}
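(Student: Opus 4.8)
The plan is to handle (i) as a classical periodic Sturm--Liouville problem and (ii) by transporting the completeness of the trigonometric system through the substitution $\eta=i\xi$ together with Weierstrass approximation. For (i), I would first rewrite \eqref{eq:mathieu1} as the eigenvalue equation $-\phi'' + 2q\cos(2\eta)\,\phi = a\phi$ on $[0,2\pi]$, a self-adjoint Sturm--Liouville problem with $p\equiv1$, weight $r\equiv1$, and periodic boundary conditions, which splits by parity into the even problem (eigenpairs $(a_n,\ce_n)$) and the odd problem (eigenpairs $(b_n,\se_n)$). Orthogonality within each parity class is the Lagrange-identity computation: for eigenfunctions with distinct eigenvalues the boundary term $[\phi_j'\phi_k-\phi_j\phi_k']_0^{2\pi}$ vanishes by $2\pi$-periodicity, forcing $\int_0^{2\pi}\phi_j\phi_k\,d\eta=0$, and since $r\equiv1$ this is plain $L^2$-orthogonality. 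Orthogonality across the two classes is immediate because $\ce_n$ is even and $\se_m$ is odd about $\eta=0$, so their product integrates to zero over the periodically symmetric interval.

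For the completeness half of (i), I would invoke the spectral theorem for the self-adjoint operator $-d^2/d\eta^2+2q\cos2\eta$ with periodic boundary conditions: it has compact resolvent, so its eigenfunctions $\{\ce_n,\se_n\}$ form a complete orthogonal system in $L^2([0,2\pi])$. For the uniform statement, I would argue that finite Mathieu combinations are uniformly dense in the continuous $2\pi$-periodic functions: the trigonometric monomials are uniformly dense (Fej\'er/Weierstrass), and each $\cos m\eta$, $\sin m\eta$ is itself a uniform limit of finite combinations of $\ce_n$, resp.\ $\se_n$, because the eigenfunction expansion of a smooth function obeying the boundary conditions converges uniformly (its Mathieu--Fourier coefficients decay rapidly).

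For (ii), the key observation is that under $\eta=i\xi$ the expansion $\ce_n=\sum_m A^{(n)}_m\cos m\eta$ becomes $\Ce_n(\xi)=\sum_m A^{(n)}_m\cosh m\xi=\sum_m A^{(n)}_m T_m(\cosh\xi)$, with $T_m$ the Chebyshev polynomials, and similarly $\Se_n$ in terms of $\sinh m\xi$. Since $u=\cosh\xi$ is a homeomorphism of $[0,\xi_0]$ onto $[1,\cosh\xi_0]$ and $\{T_m(u)\}$ spans all polynomials, the span of $\{\cosh m\xi\}_{m\ge0}$ is uniformly dense in $C^0([0,\xi_0])$ by Weierstrass; so it suffices to show each $\cosh m\xi$ is a uniform limit on $[0,\xi_0]$ of finite combinations of the $\Ce_n$. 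I would obtain this by inverting the coefficient relation on the real axis --- writing $\cos m\eta=\sum_n\gamma^{(m)}_n\ce_n(\eta)$ (uniformly convergent, $\cos m\eta$ being smooth and even) --- and then continuing to $\eta=i\xi$: the partial sums are entire, and the tail $\sum_{n>N}\gamma^{(m)}_n\ce_n(\eta)$ is bounded on the strip $|\im\eta|\le\xi_0$ by $\sum_{n>N}|\gamma^{(m)}_n|\,Ce^{n\xi_0}$. Because $|\gamma^{(m)}_n|$ decays super-exponentially while $|\ce_n(\eta)|\le Ce^{n\xi_0}$ there, the series converges uniformly on compacts of the strip; by the Weierstrass convergence theorem its limit is the entire continuation of $\cos m\eta$, so $\eta=i\xi$ yields $\cosh m\xi=\sum_n\gamma^{(m)}_n\Ce_n(\xi)$ uniformly on $[0,\xi_0]$. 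This places each $\cosh m\xi$ in the uniform closure of the span of $\{\Ce_n\}$, which with Weierstrass gives uniform completeness and, on a finite interval, $L^2$-completeness; the nonorthogonality is then confirmed by exhibiting a single pair with $\int_0^{\xi_0}\psi^+_j[q]\psi^+_k[q]\,d\xi\ne0$, $j\ne k$.

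The main obstacle is the quantitative control in (ii): one must know that $\gamma^{(m)}_n$, the weight of a fixed mode $m$ in $\ce_n$ as the order $n\to\infty$, decays faster than $e^{-n\xi_0}$, and that $|\ce_n|$ grows no faster than $e^{n\xi_0}$ on the strip $|\im\eta|\le\xi_0$. Both follow from the three-term recurrence satisfied by the coefficients $A^{(n)}_m$ --- the relevant minimal solution decays super-exponentially --- but making these estimates precise and uniform enough to justify the term-by-term continuation is the delicate step; everything else is either classical Sturm--Liouville theory or Weierstrass approximation.
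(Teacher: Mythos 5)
Your proposal is correct in outline but proceeds quite differently from the paper, which disposes of both parts essentially by citation: statement (i) is referred to Higgins (p.\ 104), and for (ii) the authors invoke McLachlan's equations (5), (6) on p.\ 207, which expand $\cos(2\sqrt{q}\cosh\xi)$ and $\sin(2\sqrt{q}\cosh\xi)$ in the system $\{\psi^\pm_n[q]\}$, followed by elementary remarks (uniform convergence is unaffected by reparametrization; $L^\infty$-convergence implies $L^2$-convergence on a finite interval). Your treatment of (i) --- the periodic Sturm--Liouville operator $-d^2/d\eta^2+2q\cos2\eta$ with compact resolvent, parity splitting, and the Lagrange identity --- is exactly the standard argument underlying the cited result; to be complete you should add that each eigenspace is invariant under $\eta\mapsto-\eta$ (or invoke Ince's coexistence theorem), so that the eigenfunctions are exhausted by the $\ce_n$ and $\se_n$. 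Your treatment of (ii) is genuinely different: instead of importing ready-made expansions of trigonometric functions of $\cosh\xi$, you manufacture expansions of $\cosh m\xi$ by analytically continuing the angular expansion $\cos m\eta=\sum_n\gamma^{(m)}_n\ce_n(\eta)$ into the strip and substituting $\eta=i\xi$, then finish with the Chebyshev substitution and Weierstrass approximation. This buys a self-contained proof with uniform convergence directly on $[0,\xi_0]$, and you correctly exploit that only the $\Ce_n$ are needed (no parity obstruction, since $\cosh$ maps $[0,\xi_0]$ bijectively onto $[1,\cosh\xi_0]$). The cost is exactly the step you flag: one must prove that $\gamma^{(m)}_n$ --- which, with the paper's normalization, is essentially the Fourier coefficient $A^{(n)}_m$ of $\ce_n$ --- decays super-exponentially in $n$ for fixed $m$, and that $\sup_{|\im\eta|\le\xi_0}|\ce_n(\eta)|=O(e^{n\xi_0})$ uniformly in $n$; both are true and follow from the minimal-solution (continued-fraction/Perron--Kreuser) analysis of the recurrence $(a-m^2)A_m=q(A_{m-2}+A_{m+2})$, but as written they are asserted rather than proved. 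It is worth noting that the paper's own proof of (ii) is also only a sketch --- the two functions $\cos(2\sqrt{q}\cosh\xi)$ and $\sin(2\sqrt{q}\cosh\xi)$ by themselves do not span, so the cited expansions must be used in a parametrized form to generate enough functions of $\cosh\xi$ --- so your more explicit route, once the coefficient estimates are supplied, arguably documents the density mechanism more transparently than the citation does.
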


\begin{proof}
  Statement (i) is proved in \cite[p.\ 104]{Higgins1977}.  Statement (ii), which
  is often used in the literature without formal justification, can be
  deduced, for example, from equations (5), (6) on page 207 of
  \cite{McLachlan1951}, which allow one to express
  $\cos(2\sqrt{q}\cosh\xi)$ and $\sin(2\sqrt{q}\cosh\xi)$ in terms of
  $\{\psi^\pm_{n}[q]\}$. One applies elementary observations such as
  the fact that uniform convergence is not affected by
  reparametrization and that convergence in $L^\infty$ implies
  convergence in $L^2$. 
\end{proof}

We will often use the weighted scalar product
\begin{equation} \label{eq:weighted}
  \langle f,g\rangle_{\xi_0} = \int_0^{2\pi}\!\!\!\int_0^{\xi_0} f(\xi,\eta)\,g(\xi,\eta)\, \frac{1}{2} (\cosh2\xi-\cos2\eta) \, d\xi d\eta.
\end{equation}  It is readily verified
that when changing from elliptical to Cartesian coordinates, we write
$ F(x,y)=f(\xi,\eta)$, $G(x,y) = g(\xi,\eta)$, and so \eqref{eq:weighted} is equal to the unweighted inner product of
$F, G$ in $L^2(\Omega_{\xi_0})$ with respect to Lebesgue measure.
 
\subsection{Quaternions and $\lambda$-metamonogenic functions}

 We are interested in a theory of functions from the plane domain
 $\Omega_{\xi_0}$ to $\R^3$. For this purpose, we embed $\R^3$ into the
space $\H$ of (real) quaternions by considering quaternions with
vanishing $\textbf{k}$-coefficient. These are also known as
\textit{reduced quaternions}. A quaternion
\cite{K2003,MoraisGeorgievSproessig2014} is notated as
$a =a_0+\ei a_1+\ej a_2+\ek a_3$, where $a_0 = \Sc a$ is the scalar part of $a$. Here $a_m=[a]_m\in\R$ and
$\ei,\ej,\ek$ are the quaternionic imaginary units satisfying
$\ei^2 = \ej^2 = \ek^2 = \ei \ej \ek = -1$. The usual
component-wise defined addition is implied when $\H$ is identified with
$\R^4$, which also induces the absolute value on $\H$.

Let $\Omega$ be a domain in $\mathbb{R}^2$ (open and connected). Let
$L^2(\Omega,\R^3)$ denote the $\R$-linear space of all $\R^3$-valued
functions $F\colon\Omega\to\R^3$, that is
\begin{eqnarray*}
F(x,y) = [F]_0 (x,y) + \ei [F]_1(x,y) +
    \ej [F]_2(x,y), 
\end{eqnarray*}
such that the components $[F]_m$ ($m=0,1,2$) are in the usual
$L^2(\Omega,\R)$.  We will use the $\R$-valued inner product
\begin{eqnarray} \label{eq:InnerProduct}
 \langle F, G \rangle_{\Omega} =
  \Sc\int \!\!\!\int_{\Omega} \;  \overline{F(x,y)} \, G(x,y) \,dx\,dy
\end{eqnarray}
for $F, G \in L^2(\Omega,\R^3)$. It is a real Hilbert space with the
associated norm $\|F\|_{\Omega} = \langle F, F\rangle^{1/2}$, and it coincides
with the usual $L^2$-norm for $F$, viewed as a vector-valued function
in $\Omega$ \cite{GS1989,GS1997}. Similarly, one has the weighted
inner product $\langle f, g \rangle_{\xi_0}$ analogous to
\eqref{eq:weighted}, \eqref{eq:InnerProduct} for $\R^3$-valued
functions defined in the coordinate rectangle $R_{\xi_0}$.

We consider the operator
\begin{equation} \label{eq:defD}
D = \ei\frac{\partial}{\partial x} + \ej\frac{\partial}{\partial y}.
\end{equation}

\begin{Definition} \label{Main_definitions} Given
  $\lambda \in \R \setminus \{0\}$, a function of two variables is
  said to be \textit{$\lambda$-metaharmonic} if it is in the kernel of
  the Helmholtz operator $\Delta+\lambda^2$ and
  \textit{$\lambda$-metamonogenic} 
  if it is in the kernel of the Moisil-Teodorescu operator $D+\lambda$. For domains
  $\Omega\subseteq\R^2$, we write
  \begin{equation*}
\M(\Omega;\lambda) = \Ker (D + \lambda) \subseteq
  C^1(\Omega,\R^3)
  \end{equation*}
and
$\M_2(\Omega;\lambda) = \M(\Omega;\lambda) \cap
L^2(\Omega,\R^3)$. Further, for $\xi_0>0$, we write
\begin{equation*}
\M({\xi_0};\lambda) = \Ker (\D + \lambda) \subseteq \S(\xi_0),
\end{equation*}
where
$\S(\xi_0)$ was given in Definition \ref{defi:S}, and
$\M_2(\xi_0;\lambda) = \M({\xi_0};\lambda) \cap L^2(R_
{\xi_0},\R^3)$ with the weighted product \eqref{eq:weighted}.
\end{Definition}

We will see in Proposition \ref{prop:convergence} that
$\M_2(\xi_0;\lambda)$ is a closed subset of $L^2(R_{\xi_0},\R^3)$ and
therefore is a Hilbert space.
 
While $D+\lambda$ does not generally commute with
quaternionic functions, since we are considering $\R^3$-valued
functions in this paper, the definition of $\lambda$-metamonogenic function does not depend on
whether one applies $D+\lambda$ from the left or the right.

\begin{Proposition}  \label{prop:factorhelmholtz}
$(D+\lambda)(D-\lambda) = -(\Delta+\lambda^2)$ for $\lambda \in \R \setminus \{0\}$.
\end{Proposition}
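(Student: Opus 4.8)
The plan is to verify this directly as an identity of differential operators with constant quaternionic coefficients acting on the left, exploiting the fact that $\lambda\in\R$ is a real scalar and therefore commutes with $D$ and with the imaginary units. First I would expand the composition formally. Because $\lambda$ is central in $\H$, the middle terms cancel cleanly:
\begin{equation*}
(D+\lambda)(D-\lambda) = D^2 - \lambda D + \lambda D - \lambda^2 = D^2 - \lambda^2,
\end{equation*}
so the entire claim reduces to establishing the single operator identity $D^2 = -\Delta$.

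Next I would compute $D^2$ by expanding the square of $D=\ei\,\partial/\partial x + \ej\,\partial/\partial y$ and keeping careful track of the order of the imaginary units, since they do not commute:
\begin{equation*}
D^2 = \ei^2\frac{\partial^2}{\partial x^2} + \ei\ej\frac{\partial^2}{\partial x\,\partial y} + \ej\ei\frac{\partial^2}{\partial y\,\partial x} + \ej^2\frac{\partial^2}{\partial y^2}.
\end{equation*}
From the multiplication rules $\ei^2=\ej^2=-1$ and $\ei\ej=\ek=-\ej\ei$, the two cross terms combine into $\ek\,(\partial^2/\partial x\,\partial y - \partial^2/\partial y\,\partial x)$.

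The only point requiring justification is the vanishing of this $\ek$-term. Since $D^2$ is a second-order operator, it is applied to functions that are at least $C^2$ (which is automatic on the interior for $\lambda$-metamonogenic functions, as applying $D$ once already shows they are as smooth as one needs), so by the equality of mixed partial derivatives the bracketed expression is zero and the $\ek$ contribution disappears entirely. This is what I expect to be the main—indeed the only—subtlety: the bookkeeping of noncommutativity produces a spurious $\ek$-component that must be eliminated by Schwarz's theorem, and it is worth noting that its cancellation is also what keeps $D^2$ acting within the reduced quaternions. With that term gone we are left with
\begin{equation*}
D^2 = -\frac{\partial^2}{\partial x^2} - \frac{\partial^2}{\partial y^2} = -\Delta,
\end{equation*}
and substituting into the reduction above yields $(D+\lambda)(D-\lambda) = -\Delta - \lambda^2 = -(\Delta+\lambda^2)$, completing the argument.
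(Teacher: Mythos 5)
Your proof is correct. The paper itself states this proposition without proof, treating it as a classical fact (citing Hamilton and the standard quaternionic-analysis literature), and your direct expansion---centrality of $\lambda\in\R$ killing the cross terms, $\ei^2=\ej^2=-1$, and Schwarz's theorem eliminating the spurious $\ek\,(\partial^2/\partial x\,\partial y-\partial^2/\partial y\,\partial x)$ term---is precisely the standard verification the paper implicitly relies on.
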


The factorization of the Laplacian via $D$ was observed by W.\
Hamilton \cite[Section CVII]{WRH1853}\footnote{We are grateful
to V.\ V.\ Kravchenko for pointing out this fact to us.}. The factorization for the
Helmholtz operator appears in modern literature in
\cite{Gurl1986,GuerlebeckHabethaSproessig2008,GuerlebeckHabethaSproessig2016}
for functions defined in domains in $3$-space. It was subsequently
explored in \cite{KrSh,K2003} and in elliptical, spherical, and ellipsoidal
(prolate and oblate spheroidal) coordinate systems in
\cite{LuPeRoSha2013,LunaMoraisRosaShapiro2016,MoraisRosaKou2015,MoraisRosa2015,MoraisPerezLe2016},
where a version of $D+\lambda$ appears.

\begin{Proposition} \label{prop:convergence} Let
  $\Omega\subseteq\R^2$. (i) Let $F_n\in \M({\Omega;\lambda})$ and
  suppose that $F_n\to F$ uniformly on compact subsets of
  $\Omega$. Then $F\in \M({\Omega;\lambda})$.  (ii) Let
  $F_n,F\in \M_2({\Omega;\lambda})$ and suppose that
  $\|F_n\to F\|_2\to0$. Then $F_n\to F$ uniformly on compact subsets
  of $\Omega$.
\end{Proposition}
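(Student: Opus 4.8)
The plan is to pass through the second-order theory using the factorization of Proposition \ref{prop:factorhelmholtz}. The crucial observation is that if $(D+\lambda)F=0$, then applying $D-\lambda$ yields $(D-\lambda)(D+\lambda)F = -(\Delta+\lambda^2)F = 0$, so every $\lambda$-metamonogenic function is $\lambda$-metaharmonic; in particular each of its three real components $[F]_m$ is a solution of the scalar Helmholtz equation $\Delta u + \lambda^2 u = 0$. This is an elliptic equation with constant (hence real-analytic) coefficients, so the full machinery of interior elliptic estimates becomes available for the components, and both statements reduce to standard facts about such solutions.

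For part (i), I would first use the fact that for a fixed elliptic operator, local uniform convergence of solutions forces local uniform convergence of all derivatives. Concretely, the interior estimates for the Helmholtz equation bound $\|u\|_{C^k(K)}$ on a compact $K \subset\subset \Omega$ in terms of $\|u\|_{C^0(\Omega')}$ for an intermediate set $K \subset\subset \Omega' \subset\subset \Omega$. Applying these to the components of $F_n - F_m$ shows that $\{F_n\}$ is Cauchy in $C^1(K)$ for every such $K$, so $F \in C^1(\Omega,\R^3)$ and $DF_n \to DF$ uniformly on compact sets. Since $DF_n = -\lambda F_n \to -\lambda F$, passing to the limit gives $DF = -\lambda F$, that is, $(D+\lambda)F = 0$. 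Alternatively, one may invoke the Cauchy integral representation for $D+\lambda$ from \cite{LuPeRoSha2013} and pass the uniform limit through the integral over a fixed interior contour; the regularity route is more self-contained.

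For part (ii), the key tool is the interior $L^\infty$--$L^2$ estimate for the Helmholtz equation: for $K \subset\subset \Omega' \subset\subset \Omega$ there is a constant $C = C(K,\Omega',\lambda)$ such that every solution $u$ of $\Delta u + \lambda^2 u = 0$ satisfies $\|u\|_{C^0(K)} \le C \|u\|_{L^2(\Omega')}$. Since $F_n - F$ is again $\lambda$-metamonogenic, hence $\lambda$-metaharmonic, applying this estimate componentwise gives $\|F_n - F\|_{C^0(K,\R^3)} \le C' \|F_n - F\|_{L^2(\Omega',\R^3)} \le C' \|F_n - F\|_2 \to 0$, which is exactly the asserted uniform convergence on compact sets.

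The main obstacle is not conceptual but lies in correctly invoking and justifying the interior elliptic estimates in a form tailored to the Helmholtz operator, since the intended readership is oriented toward special functions rather than PDE theory. A careful treatment would either cite a standard reference for Caccioppoli- and G\aa rding-type interior inequalities or derive the required bounds directly from a fundamental-solution (Green's function) representation of solutions on smooth subdomains, which simultaneously delivers both the derivative control needed for (i) and the $L^\infty$--$L^2$ control needed for (ii).
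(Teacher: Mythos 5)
Your proposal is correct in substance but takes a genuinely different route from the paper. You push everything down to the scalar Helmholtz equation via the factorization of Proposition \ref{prop:factorhelmholtz} and then invoke general interior elliptic estimates componentwise: $C^0$-to-$C^1$ control on compact subsets for (i), and the $L^\infty$--$L^2$ interior bound for (ii). The paper instead stays at the first-order level: it builds the explicit Cauchy kernel $K_\lambda = -(D-\lambda)\Theta_\lambda$ from the fundamental solution $\Theta_\lambda(z)=\frac{1}{2}Y_0(\lambda|z|)$, establishes a Cauchy integral formula for $\M(\Omega;\lambda)$, and derives both parts from it --- (i) by passing the uniform limit through the contour integral (this is exactly the alternative you mention in passing), and (ii) by averaging the Cauchy formula over radii, using the explicit Bessel-function bound $|rK_\lambda(re^{i\theta})|\le c_{r_0}$ and the Cauchy--Bunyakovsky--Schwarz inequality to obtain the pointwise estimate $|G(z)|\le c_{r_0}'\|G\|_2$, which is precisely the interior estimate you import wholesale from elliptic theory. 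Your route buys brevity and generality at the cost of citing (or reproving) standard PDE machinery for an audience the authors explicitly expect to be special-functions oriented; the paper's route is self-contained and quantitative, and keeps the quaternionic structure visible, with the same kernel reappearing in the elliptical-coordinate theory of \cite{LuPeRoSha2013}.

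One caveat you should repair before your argument is airtight: Definition \ref{Main_definitions} only requires elements of $\M(\Omega;\lambda)$ to be $C^1$, so you cannot literally apply $D-\lambda$ to $(D+\lambda)F=0$ to conclude that the components $[F]_m$ solve the Helmholtz equation classically --- that step needs second derivatives. The standard fix is one line: $(D+\lambda)F=0$ holds in the sense of distributions, hence $(\Delta+\lambda^2)F=0$ distributionally, and hypoellipticity of the Helmholtz operator upgrades $F$ (and each $F_n$) to $C^\infty$, after which your interior estimates apply. With that repair, part (i) goes through as written --- your $C^1$-Cauchy argument on compact sets correctly delivers $DF_n\to DF$, whence $(D+\lambda)F=0$ --- and part (ii) is immediate since $F_n-F$ is again $\lambda$-metamonogenic. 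Note that the paper's kernel-based proof sidesteps this regularity issue entirely, since the Stokes-theorem derivation of the Cauchy formula needs only $C^1$ data.
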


\begin{proof}
  (i) The function $\Theta_\lambda(z)=(1/2)Y_0(\lambda|z|)$ is a fundamental
  solution of the Helmholtz equation \eqref{eq:helmholtz}, where $Y_0$
  is the Bessel function of the second kind of order zero \cite[p.\
  154]{Vladimirov1971}. (We have combined the complex-valued
  fundamental solutions generally treated in the literature into a
  real-valued solution since we are not interested in the behavior at
  infinity.) Define
  \begin{equation*}
  K_\lambda(z) = -(D-\lambda)\Theta_\lambda(z),
  \end{equation*}
  which is smooth on
  $\R^2\setminus\{0\}$ and satisfies $(D+\lambda) K_\lambda(z)=0$ for
  $z\not=0$.  From $(D+\lambda)F_n=0$, standard techniques
  \cite{K2003} produce the Cauchy integral formula for
  $\M({\Omega;\lambda})$,
  \begin{equation*}
  F_n(z_0) = \int_C K_\lambda(z-z_0) \vec n(z) F_n(z)\,ds
  \end{equation*}
  for every simple closed curve $C$ encircling $z_0$ in $\Omega$. Here
  $ds$ represents arc length and $\vec n(z) = \ei n_1(z) + \ej n_2(z)$,
  where $n_1(z)+in_2(z)$ is the outward unit normal vector to $\C$ at
  $z$. (The analogous integral kernel in elliptical coordinates is
  worked out in \cite{LuPeRoSha2013}.) By uniform convergence on $C$,
  we have a Cauchy integral representation for $F(z)$ as well, from
  which it follows that $(D+\lambda)F=0$.  \\(ii) We have explicitly
  \begin{equation*}
  2K_\lambda(z) = \lambda Y_0(\lambda|z|)+
    \ei \frac{\lambda\re (z)}{|z|} \, Y_1(\lambda|z|) +
    \ej \frac{\lambda\im (z)}{|z|} \, Y_1(\lambda|z|),
  \end{equation*}
where $Y_1=-Y_0^{\prime}$ is the Bessel function of the second kind of order one. It is well known that
 \begin{equation*}
 \lim_{r\downarrow0}\frac{Y_0(r)}{\log r} =
    -\lim_{r\downarrow0}rY_1(r) = \frac{2}{\pi} .
 \end{equation*}
 By the Cauchy formula, for any $G\in\M(\Omega;\lambda)$ we have
 \begin{equation*}
 G(z_0) = \int_0^{2\pi} K_\lambda(re^{i\theta}) \, \vec n(re^{i\theta})
   \, G(re^{i\theta})\,r d\theta 
 \end{equation*}
 as long as $r$ does not exceed the distance $r_0$ from $z_0$ to the
 boundary of $\Omega$.  Multiply by $r$ and integrate with respect
 to $r$ to obtain
 \begin{equation*}
 \frac{r_0^2}{2} \le  c_{r_0} \int_0^{2\pi} \hspace{-0.075cm} \int_0^{r_0} |G(z)|\,dx\,dy,
 \end{equation*}
 where the bound $|r K_\lambda(r e^{i\theta})|\le c_{r_0}$ for
 $0\le r\le r_0$ is guaranteed by the explicit representation of
 $K_\lambda$. From the Cauchy-Bunyakovsky-Schwarz inequality, we obtain
 an estimate
 \begin{equation*}
 |G(z)| \le c_{r_0}' \|G\|_2,
 \end{equation*}
 valid for $|z-z_0|\le r_0$. Apply this to $G=F-F_n$ to deduce that
 $F_n$ converges locally uniformly to $F$, hence uniformly on every
 compact subset of $\Omega$.  
\end{proof}

It is clear that the change of coordinates \eqref{eq:coordinates}
defines one-to-one correspondences
$\M(\Omega_{\xi_0};\lambda)\leftrightarrow \M(\xi_0;\lambda)$ and
$\M_2(\Omega_{\xi_0};\lambda)\leftrightarrow \M_2(\xi_0;\lambda)$,
the latter being a Hilbert space isometry, and that Proposition
\ref{prop:convergence} has an immediate analogue for functions in
$\S(\xi_0)$. Note that compact subsets of $\Omega_{\xi_0}$ correspond
to compact subsets of $R_{\xi_0}$ despite the ambiguity of
the correspondence along $[-1,1]$ noted in Section \ref{subsec:ellipcoord}.

We conclude this section with the following technical lemma, whose proof is immediate from Definition \ref{Main_definitions}.
\begin{Lemma} \label{lemm:zeroscalar}
Let $F \in \M({\Omega;\lambda})$. If $F_0$ is constant, then $F = 0$ identically.
\end{Lemma}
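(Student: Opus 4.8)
The plan is to prove the statement by writing the metamonogenicity condition $(D+\lambda)F=0$ out in real coordinates. Writing $F=[F]_0+\ei[F]_1+\ej[F]_2$ and applying $D=\ei\,\partial/\partial x+\ej\,\partial/\partial y$ together with the multiplication rules $\ei^2=\ej^2=-1$, $\ei\ej=\ek$, $\ej\ei=-\ek$, I would first compute $DF$ and collect terms according to their quaternionic unit. Setting the result equal to $-\lambda F$ and matching the components $1,\ei,\ej,\ek$ separately gives the system
\begin{align*}
\lambda[F]_0 &= \frac{\partial[F]_1}{\partial x}+\frac{\partial[F]_2}{\partial y},\\
\lambda[F]_1 &= -\frac{\partial[F]_0}{\partial x},\\
\lambda[F]_2 &= -\frac{\partial[F]_0}{\partial y},\\
0 &= \frac{\partial[F]_2}{\partial x}-\frac{\partial[F]_1}{\partial y}.
\end{align*}
The final ($\ek$-)equation must be included because $(D+\lambda)F=0$ is required to vanish as a full quaternion, even though $F$ itself carries no $\ek$-part.

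The conclusion is then immediate. If $F_0=[F]_0$ is constant, then $\partial[F]_0/\partial x=\partial[F]_0/\partial y=0$, so the second and third equations reduce to $\lambda[F]_1=\lambda[F]_2=0$. Since $\lambda\neq0$ by hypothesis, this forces $[F]_1=[F]_2=0$ identically. Substituting back into the first (scalar) equation yields $\lambda[F]_0=0$, and again $\lambda\neq0$ gives $[F]_0=0$. Hence all three components vanish, so $F=0$ identically, and the $\ek$-equation is trivially satisfied.

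There is essentially no genuine obstacle here: the lemma is asserted to be immediate from Definition \ref{Main_definitions}, and the only point requiring care is the bookkeeping of the quaternion products, in particular keeping track of the spurious $\ek$-component generated by $DF$ even for reduced-quaternion-valued $F$. The essential structural feature being exploited is that $D+\lambda$ couples the scalar part $[F]_0$ to the vector parts $[F]_1,[F]_2$ through first derivatives of $[F]_0$ alone, together with the algebraic term $\lambda$; the assumption $\lambda\in\R\setminus\{0\}$ is exactly what makes these couplings invertible and rules out nonzero constants in the kernel.
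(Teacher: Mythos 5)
Your proof is correct and is precisely the computation the paper has in mind when it declares the lemma ``immediate from Definition \ref{Main_definitions}'': expanding $(D+\lambda)F=0$ into its $1,\ei,\ej,\ek$ components (your system checks out, including the sign conventions from $\ei\ej=\ek$, $\ej\ei=-\ek$) and using $\lambda\neq0$ to kill first the vector parts and then the scalar part. Your remark about retaining the $\ek$-equation, while harmless, is also apt bookkeeping, since $DF$ genuinely acquires a $\ek$-component even for reduced-quaternion-valued $F$.
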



\section{Quaternionic Mathieu functions} \label{Quaternionic_Mathieu_functions}

In this section, we introduce the reduced quaternionic analogue of the
Mathieu functions.  Recalling \eqref{eq:defLmu}, we have that the
Helmholtz equation is represented in elliptical coordinates as
\begin{align} \label{eq:helmholtzmu}
  (\L   + \lambda^2)h(\xi,\eta)= 0    
\end{align}
for $h \in C^2(R_{\xi_0},\R)$. We look at the similar
expression for $D+\lambda$ and give a complete system for its
kernel.

\subsection{$\lambda$-metamonogenic functions in elliptical coordinates}

An elementary calculation gives the following.
\begin{Proposition}[\cite{LuPeRoSha2013}] \label{prop:Dell} The
  operator $D$ in elliptical coordinates in $\Omega_{\xi_0}$ is given by
\begin{align*} 
\D  &=   \frac{2}{ (\cosh 2\xi - \cos 2\eta)}
 \bigg(  (\ei \sinh \xi \cos\eta + \ej \cosh \xi \sin \eta) 
 \frac{\partial}{\partial \xi}  \\
& \hspace*{21ex} + \;  (\ej \sinh \xi \cos \eta - \ei \cosh \xi \sin\eta)
 \frac{\partial}{\partial \eta}\bigg).   
\end{align*}
More precisely, if $f(\xi,\eta)$ is the expression of $F(x,y)$ in
$\Omega_{\xi_0}$ for a fixed $C^1$ function $F$, then
$\D f(\xi,\eta) = DF(x,y)$. 
\end{Proposition}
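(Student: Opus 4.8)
The plan is to use the chain rule to rewrite the Cartesian partials $\partial/\partial x$ and $\partial/\partial y$ in terms of $\partial/\partial\xi$ and $\partial/\partial\eta$, and then substitute into the definition \eqref{eq:defD} of $D$. Differentiating \eqref{eq:coordinates} gives the Jacobian
\[
M = \begin{pmatrix} \partial x/\partial\xi & \partial y/\partial\xi \\ \partial x/\partial\eta & \partial y/\partial\eta \end{pmatrix} = \begin{pmatrix} \sinh\xi\cos\eta & \cosh\xi\sin\eta \\ -\cosh\xi\sin\eta & \sinh\xi\cos\eta \end{pmatrix},
\]
whose rows express $\partial/\partial\xi$ and $\partial/\partial\eta$ as combinations of $\partial/\partial x$ and $\partial/\partial y$.

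First I would compute $\det M = \sinh^2\xi\cos^2\eta + \cosh^2\xi\sin^2\eta$ and reduce it, via the double-angle identities $\sinh^2\xi = \tfrac12(\cosh 2\xi - 1)$, $\cosh^2\xi = \tfrac12(\cosh 2\xi + 1)$, $\cos^2\eta = \tfrac12(1 + \cos 2\eta)$, $\sin^2\eta = \tfrac12(1 - \cos 2\eta)$, to the closed form $\det M = \tfrac12(\cosh 2\xi - \cos 2\eta)$. This is the only step requiring any care: the cross terms in $\cosh 2\xi\cos 2\eta$ cancel, leaving exactly the quantity whose reciprocal (times $2$) is the prefactor in the claimed formula.

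Next I would invert $M$, obtaining
\[
\frac{\partial}{\partial x} = \frac{1}{\det M}\Big(\sinh\xi\cos\eta\,\frac{\partial}{\partial\xi} - \cosh\xi\sin\eta\,\frac{\partial}{\partial\eta}\Big), \quad \frac{\partial}{\partial y} = \frac{1}{\det M}\Big(\cosh\xi\sin\eta\,\frac{\partial}{\partial\xi} + \sinh\xi\cos\eta\,\frac{\partial}{\partial\eta}\Big).
\]
Substituting these into $\ei\,\partial/\partial x + \ej\,\partial/\partial y$ and collecting the $\partial/\partial\xi$ and $\partial/\partial\eta$ terms yields the coefficient $\ei\sinh\xi\cos\eta + \ej\cosh\xi\sin\eta$ on $\partial/\partial\xi$ and $\ej\sinh\xi\cos\eta - \ei\cosh\xi\sin\eta$ on $\partial/\partial\eta$, with the common factor $1/\det M = 2/(\cosh 2\xi - \cos 2\eta)$ in front, which is precisely $\D$.

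Finally, the closing assertion that $\D f(\xi,\eta) = DF(x,y)$ is nothing more than the statement that this substitution \emph{is} the chain rule applied to the composition $F(x(\xi,\eta),y(\xi,\eta)) = f(\xi,\eta)$; since $F\in C^1$ the chain rule is valid and no further argument is needed. The main obstacle is purely the determinant simplification, after which the computation is bookkeeping.
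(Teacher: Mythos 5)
Your proof is correct and is precisely the ``elementary calculation'' the paper alludes to (it prints no proof of its own, simply citing \cite{LuPeRoSha2013}): invert the Jacobian of \eqref{eq:coordinates}, simplify its determinant to $\tfrac12(\cosh 2\xi-\cos 2\eta)$ via the double-angle identities, and substitute the resulting expressions for $\partial/\partial x$, $\partial/\partial y$ into \eqref{eq:defD}. All your intermediate formulas (the Jacobian, its determinant, the inverted partials, and the collected coefficients) check out, so nothing is missing.
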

 
The following statement follows directly from the definition of $\D$
and Proposition \ref{prop:factorhelmholtz}.

\begin{Proposition} \label{prop:Factorization_Laplace_Operator} The
  operator $\D$ factors the Helmholtz operator in
  elliptical coordinates; that is,
\begin{equation} \label{eq:Factorization} 
  -(\D+\lambda)(\D-\lambda) \, = \,
  -(\D-\lambda)(\D+\lambda)   =   \L + \lambda^2,
\end{equation}
where $\L$ is defined by \eqref{eq:defLmu}.
\end{Proposition}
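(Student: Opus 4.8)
The plan is to reduce the statement entirely to results already established, so that no new computation in elliptical coordinates is required. The key observation is that Proposition \ref{prop:Dell} gives an exact operator identity $\D f(\xi,\eta) = DF(x,y)$ whenever $f$ is the expression of a fixed $C^1$ function $F$ in the coordinates \eqref{eq:coordinates}. Thus $\D$ is nothing more than the Cartesian operator $D$ pulled back through the change of variables, acting on the pulled-back function. The same pullback relation holds, by \eqref{eq:defLmu} together with Proposition \ref{prop:zetahelmholtz}, between the elliptical Laplacian $\L$ and the Cartesian $\Delta$, namely $\L f(\xi,\eta) = \Delta F(x,y)$. I would state these two pullback identities first, making explicit that $(\xi,\eta)$ and $(x,y)$ are related by \eqref{eq:coordinates}.

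First I would compose the operators. Since $(D+\lambda)$ and $(D-\lambda)$ are both first-order operators obtained by pullback, and the scalar $\lambda$ is constant (hence unaffected by the coordinate change), the composition $(\D+\lambda)(\D-\lambda)$ applied to $f$ equals the pullback of $(D+\lambda)(D-\lambda)$ applied to $F$, provided $F$ is $C^2$ so that the second-order composition is well-defined. Concretely, for $f$ the expression of a $C^2$ function $F$, one has
\begin{equation*}
(\D+\lambda)(\D-\lambda)f(\xi,\eta) = (D+\lambda)(D-\lambda)F(x,y),
\end{equation*}
and likewise with the factors in the opposite order. Here I must take care that applying $\D-\lambda$ first produces a reduced-quaternion-valued function whose coordinate expression is again a legitimate pullback, so that Proposition \ref{prop:Dell} may be invoked a second time; this is where one uses that the components of $D F$ are themselves $C^1$ functions on $\Omega_{\xi_0}$.

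Next I would invoke Proposition \ref{prop:factorhelmholtz}, which gives $(D+\lambda)(D-\lambda) = -(\Delta+\lambda^2)$ in Cartesian coordinates, and the analogous identity with the factors reversed (the two orders agree because $D$ and $\lambda$ commute and $D$ has constant coefficients, so $(D-\lambda)(D+\lambda)$ expands to the same expression). Pulling this back through \eqref{eq:coordinates} and using $\L f = \Delta F$ yields
\begin{equation*}
-(\D+\lambda)(\D-\lambda)f = -(\D-\lambda)(\D+\lambda)f = (\L+\lambda^2)f,
\end{equation*}
which is exactly \eqref{eq:Factorization}. The main obstacle is purely bookkeeping rather than conceptual: one must confirm that the intermediate function $(\D-\lambda)f$, viewed in Cartesian coordinates, is smooth enough and lies in the class for which Proposition \ref{prop:Dell} applies, so that the second application of the pullback identity is legitimate along the whole domain, including any coordinate ambiguity along $[-1,1]\times\{0\}$ noted earlier. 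Since this degeneracy concerns only the coordinate representation and not the underlying Cartesian operators, the identity holds wherever both sides are defined, and I would remark that the elliptical identity is therefore an immediate transcription of the Cartesian factorization in Proposition \ref{prop:factorhelmholtz}.
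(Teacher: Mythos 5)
Your proof is correct and takes essentially the same route as the paper, which disposes of this proposition in a single line (``follows directly from the definition of $\D$ and Proposition \ref{prop:factorhelmholtz}''), i.e., exactly the pullback-of-the-Cartesian-factorization argument you spell out, including the observation that the two factor orders agree because $D$ has constant coefficients. One minor citation slip: the identity $\L f(\xi,\eta)=\Delta F(x,y)$ comes from the statement accompanying \eqref{eq:defLmu} in Section \ref{subsec:ellipcoord}, not from Proposition \ref{prop:zetahelmholtz}, which is about Mathieu functions solving the Helmholtz equation.
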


The factorization \eqref{eq:Factorization} suggests that reduced
quaternionic $\lambda$-meta\-mo\-no\-genic functions should play a
role for the operator $\L$, similar to the usual metaharmonic
functions in two variables for the corresponding Helmholtz operator
\cite{Vekua1968}. However, there is a significant difference, inasmuch
as the first-order operator $\D$ has nonconstant coefficients, and the
theories of such operators are much more sophisticated (see, e.g.,
\cite{KrSh}).


\subsection{$\lambda$-RQM functions\label{subsec:RQMfunctions}}

In this part, we define the $\lambda$-reduced quaternionic Mathieu functions and
express their components in terms of the solutions of the angular and
radial Mathieu equations. Such $\R^3$-valued Mathieu functions were
considered in \cite{LuPeRoSha2013}, and properties of the Cauchy
integral and related transformations were explored. Here we are
interested in constructing explicit formulas.
 
\begin{Definition} \label{Definition_RQM functions} Let $\lambda\in\R\setminus\{0\}$. For $n \geq 0$, the corresponding  
  \textit{$\lambda$-reduced quaternionic Mathieu functions} ($\lambda$-RQM functions) are the  $\R^3$-valued functions
  $\Mat^+_n[\lambda]$, $\Mat^-_n[\lambda]$ defined in the
  parametric space $[0,\infty) \times [0,2\pi]$ by
\begin{equation*}
  \Mat^\pm_n[\lambda] = -\frac{1}{\lambda} \,
  (\D -\lambda)\zeta^\pm_n[\lambda^2/4],
\end{equation*}
where $\zeta^\pm_n[q]$ are given in Definition \ref{def:zeta}.
\end{Definition}

Note that both the source function and the operator are adapted to the elliptical domain of interest in this definition.

The quaternionic components of the $\lambda$-RQM functions can be read off
immediately from the formulas of Definition \ref{def:zeta} and
Proposition \ref{prop:Dell}:

\begin{Proposition} \label{prop:mathieucomponents} Let $\lambda\in\R\setminus\{0\}$.
  The $\lambda$-RQM functions can be expressed as follows, where
  we write $q=\lambda^2/4$:
\begin{align*}
  & \Mat^\pm_n[\lambda](\xi,\eta) =
     \psi^\pm_n[q](\xi)\phi^\pm_n[q](\eta)  -
 \frac{2}{\lambda (\cosh 2\xi - \cos 2\eta)} \times \\
& \ \ \Bigl( \ei \big( \sinh \xi \cos \eta \, \psi^\pm_n[q]'(\xi) \phi^\pm_n[q](\eta) - \cosh \xi \sin \eta \, \psi^\pm_n[q](\xi) \phi^\pm_n[q]'(\eta) \big) \\
& \ + \ej\big( \cosh \xi \sin \eta \, \psi^\pm_n[q]'(\xi) \phi^\pm_n[q](\eta) + \sinh \xi \cos \eta \, \psi^\pm_n[q](\xi) \phi^\pm_n[q]'(\eta) \big) \Bigl).
\end{align*}
In particular, the scalar parts are simply $\Sc \Mat^\pm_n[\lambda]  =  \zeta^{\pm}_n[\lambda^2/4]$.
Further, $\Mat^\pm_n[\lambda] \in \M_2(\xi_0;q)$ for every $\xi_0>0$.
\end{Proposition}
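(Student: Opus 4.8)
The plan is to handle the three assertions in turn, the first two being direct computations and the third carrying the real content. For the component formula and the scalar part, write $q=\lambda^2/4$ and abbreviate $\psi=\psi^\pm_n[q]$, $\phi=\phi^\pm_n[q]$, so that $\zeta^\pm_n[q]=\psi(\xi)\phi(\eta)$ and hence $\partial_\xi\zeta^\pm_n[q]=\psi'\phi$, $\partial_\eta\zeta^\pm_n[q]=\psi\phi'$. Substituting these two derivatives into the expression for $\D$ from Proposition \ref{prop:Dell} and collecting the $\ei$- and $\ej$-coefficients produces $\D\zeta^\pm_n[q]$ explicitly; since
\[
\Mat^\pm_n[\lambda]=-\frac{1}{\lambda}(\D-\lambda)\zeta^\pm_n[q]=\zeta^\pm_n[q]-\frac{1}{\lambda}\D\zeta^\pm_n[q],
\]
the stated formula then follows by bookkeeping. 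The scalar part is immediate: the coefficients of $\D$ in Proposition \ref{prop:Dell} are pure vectors (multiples of $\ei$ and $\ej$), so $\D\zeta^\pm_n[q]$ has vanishing scalar part, whence $\Sc\Mat^\pm_n[\lambda]=\Sc\zeta^\pm_n[q]=\zeta^\pm_n[q]$.

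Next I would verify metamonogenicity. Applying $\D+\lambda$ to the definition and invoking the factorization of Proposition \ref{prop:Factorization_Laplace_Operator} gives
\[
(\D+\lambda)\Mat^\pm_n[\lambda]=-\frac{1}{\lambda}(\D+\lambda)(\D-\lambda)\zeta^\pm_n[q]=\frac{1}{\lambda}(\L+\lambda^2)\zeta^\pm_n[q].
\]
Because $q=\lambda^2/4$, i.e.\ $4q=\lambda^2$, Proposition \ref{prop:zetahelmholtz} yields $(\L+\lambda^2)\zeta^\pm_n[q]=(\L+4q)\zeta^\pm_n[q]=0$. Hence $(\D+\lambda)\Mat^\pm_n[\lambda]=0$, so $\Mat^\pm_n[\lambda]\in\Ker(\D+\lambda)$; note the factorization forces the shift $\lambda$ (not $q$) here.

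The main obstacle is the remaining membership requirements, namely that $\Mat^\pm_n[\lambda]$ lies in $\S(\xi_0)$ and is square-integrable, because the explicit formula carries the factor $1/(\cosh 2\xi-\cos 2\eta)$, which blows up at the foci $(\xi,\eta)=(0,0),(0,\pi),(0,2\pi)$, i.e.\ at the points $(\pm 1,0)$. I would argue that this singularity is only apparent by passing through the Cartesian picture. The products $\psi^\pm_n\phi^\pm_n$ satisfy the symmetries (a), (b), (c) (as recorded after Definition \ref{def:zeta}), so $\zeta^\pm_n[q]\in\S(\xi_0)$ is the coordinate expression of a genuine $C^1$ real function $Z$ on $\Omega_{\xi_0}$, indeed on a slightly larger ellipse since $\psi,\phi$ are defined for all $\xi,\eta$. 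By Proposition \ref{prop:zetahelmholtz}, $Z$ solves the Helmholtz equation $(\Delta+\lambda^2)Z=0$ there, so by elliptic regularity $Z$ is real-analytic on the open ellipse, in particular smooth at the interior foci. Consequently $(D-\lambda)Z$ is a smooth $\R^3$-valued function on $\overline{\Omega_{\xi_0}}$, and by the identity $\D f=DF$ of Proposition \ref{prop:Dell} it is precisely the Cartesian avatar of $\Mat^\pm_n[\lambda]$. Being smooth on the compact set $\overline{\Omega_{\xi_0}}$, each component is bounded, hence lies in $L^2(\Omega_{\xi_0})$; by the remark following \eqref{eq:weighted} the weighted product on $R_{\xi_0}$ equals the Cartesian $L^2$-product, so $\Mat^\pm_n[\lambda]\in L^2(R_{\xi_0},\R^3)$. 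Finally, since $\Mat^\pm_n[\lambda]$ is the pullback of a smooth Cartesian function, each of its components again satisfies (a), (b), (c), so $\Mat^\pm_n[\lambda]\in\S(\xi_0)$. Combined with the previous paragraph, this shows $\Mat^\pm_n[\lambda]\in\M_2(\xi_0;\lambda)$ for every $\xi_0>0$.
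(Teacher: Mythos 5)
Your proof is correct and takes essentially the same route as the paper, which likewise reads the component formula off from Definition \ref{def:zeta} and Proposition \ref{prop:Dell} and obtains $\lambda$-metamonogenicity from the factorization of Proposition \ref{prop:Factorization_Laplace_Operator} together with $(\L+4q)\zeta^\pm_n[q]=0$ and the symmetries of $\S(\xi_0)$. Your added care about the apparent singularity of the factor $1/(\cosh 2\xi-\cos 2\eta)$ at the foci (via the Cartesian picture and elliptic regularity) and the explicit $L^2(R_{\xi_0},\R^3)$ membership fills in details the paper only asserts, and you correctly read the statement's ``$\M_2(\xi_0;q)$'' as standing for $\M_2(\xi_0;\lambda)$, which is what both your factorization argument and the paper's intend.
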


The final statement, that the $\Mat^\pm_n[\lambda]$ are
$\lambda$-metamonogenic, is seen from the factorization of
Proposition \ref{prop:factorhelmholtz} and the fact that the
$\zeta^\pm_n[q]$ are always $2\sqrt{q}$-metaharmonic as well as
satisfying the symmetry properties of $\S(\xi_0)$. Note that 
$\Mat^\pm_n[-\lambda]=\overline{\Mat^\pm_n[\lambda]}$. The
idea of completing a scalar solution of the Helmholtz equation
to a $\lambda$-metamonogenic function by applying
$(-1/\lambda)(D-\lambda)$ may be found in \cite{DelgadoKravchenko2019}.

\begin{figure}[t]
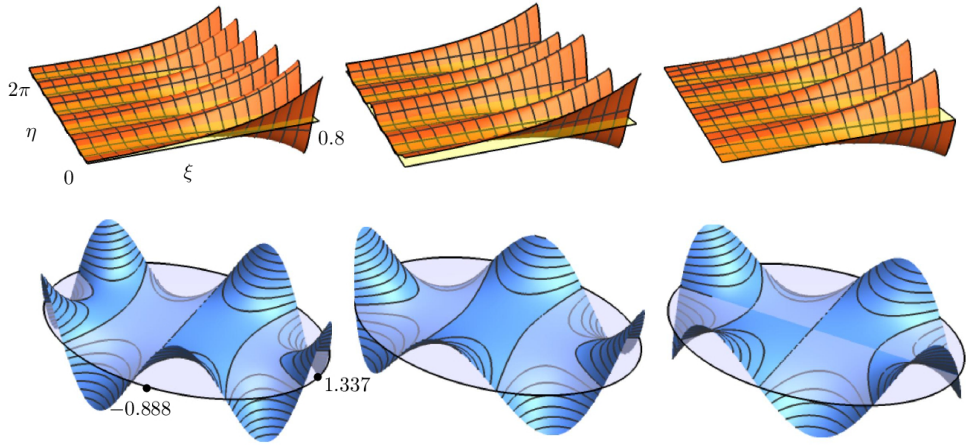

  \pic{figqrm}{1,-5.7}{3}{scale=.28}\\
  \pic{figqrmellipse}{1,-5.7}{3}{scale=.28}
  \caption{Graphs of the components of the RQM function
    $\Mat^+_{5}[1.5]$ over the coordinate rectangle $R_{0.8}$ (above)
    and the corresponding 1.5-metamonogenic function in the ellipse
    $\Omega_{0.8}$ (below). \label{fig:rqm}}
\end{figure}


\subsection{Completeness}

We now prove a reduced-quaternionic counterpart of Proposition \ref{prop:orthogonalmathieu}.
\begin{Theorem} \label{Theorem_completeness}
For fixed $\lambda\in\R\setminus\{0\}$, the closure of the collection of $\lambda$-RQM functions $\{\Mat^\pm_n[\lambda]\colon n\ge0\}$, is a complete subset of
  $\M({\xi_0};\lambda)$ in the sense of uniform convergence on
  compact subsets. Further, it is complete in the Hilbert space
  $\M_2(\xi_0;\lambda)$.
\end{Theorem}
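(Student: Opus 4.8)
The plan is to reduce the whole statement to the completeness of the \emph{scalar} Mathieu functions among $\lambda$-metaharmonic functions. First I would record the structural fact that a $\lambda$-metamonogenic function is determined by its scalar part. Writing $F=\Sc F+\ei F_1+\ej F_2$ and expressing $\D$ in Cartesian form $D=\ei\,\partial/\partial x+\ej\,\partial/\partial y$ (Proposition \ref{prop:Dell}), the $\ei$- and $\ej$-components of $(D+\lambda)F=0$ give $F_1=-\lambda^{-1}\partial_x(\Sc F)$ and $F_2=-\lambda^{-1}\partial_y(\Sc F)$, while the scalar component, after substituting these, reads $\Delta(\Sc F)+\lambda^2\,\Sc F=0$. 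Hence $F=-\lambda^{-1}(\D-\lambda)(\Sc F)$, so the operator $T:=-\lambda^{-1}(\D-\lambda)$ is a linear bijection from the $\lambda$-metaharmonic functions in $\S(\xi_0)$ onto $\M(\xi_0;\lambda)$ satisfying $\Sc\circ T=\mathrm{id}$ (surjectivity holds because the Cartesian components $-\lambda^{-1}\partial_x h,-\lambda^{-1}\partial_y h$ descend from honest functions on $\Omega_{\xi_0}$ and so inherit the symmetries of $\S(\xi_0)$). This refines Lemma \ref{lemm:zeroscalar} and matches Proposition \ref{prop:mathieucomponents}, where $\Mat^\pm_n[\lambda]=T\zeta^\pm_n[\lambda^2/4]$ and $\Sc\Mat^\pm_n[\lambda]=\zeta^\pm_n[\lambda^2/4]$. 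Because $T$ is linear, approximating $F$ by combinations of the $\Mat^\pm_n[\lambda]$ amounts to approximating $\Sc F$ by the same combinations of the $\zeta^\pm_n[\lambda^2/4]$ and then applying $T$.

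Next I would establish that $\{\zeta^\pm_n[q]\}$ (with $q=\lambda^2/4$) is complete among $\lambda$-metaharmonic functions in $\S(\xi_0)$ by separation of variables. Given such an $h=\Sc F$, for each fixed $\xi$ I expand $h(\xi,\cdot)$ in the complete orthogonal angular system $\{\phi^\pm_n[q]\}$ of Proposition \ref{prop:orthogonalmathieu}(i), obtaining coefficient functions $c^\pm_n(\xi)$. Substituting into the metaharmonic equation written as $(\partial_\xi^2+\partial_\eta^2)h+2q(\cosh2\xi-\cos2\eta)h=0$, integrating the $\partial_\eta^2$ term by parts (legitimate by the periodicity (a)) and replacing $\partial_\eta^2\phi^\pm_n$ via the angular equation \eqref{eq:mathieu1}, the two $\cos2\eta$ terms cancel and each $c^\pm_n$ is seen to satisfy the radial Mathieu equation \eqref{eq:mathieu2} with the corresponding characteristic. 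The symmetry conditions (b), (c) defining $\S(\xi_0)$ force $c^+_n{}'(0)=0$ and $c^-_n(0)=0$, selecting the regular solutions, so $c^\pm_n=a^\pm_n\psi^\pm_n[q]$ and $h=\sum_n\bigl(a^+_n\zeta^+_n[q]+a^-_n\zeta^-_n[q]\bigr)$.

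For the first (uniform-on-compacts) assertion I would then transfer this expansion through $T$. The partial sums $h_N=\sum_{n\le N}(a^+_n\zeta^+_n[q]+a^-_n\zeta^-_n[q])$ converge to $h$ uniformly on compact subsets; here the uniform completeness in Proposition \ref{prop:orthogonalmathieu} together with the real-analyticity of the metaharmonic $h$ (which makes the angular coefficients decay fast, uniformly in $\xi$ on compact subsets) is what upgrades the $L^2_\eta$-convergence of the Fourier–Mathieu series to uniform convergence. Since each $h-h_N$ is again $\lambda$-metaharmonic (Proposition \ref{prop:zetahelmholtz}), standard interior estimates for the elliptic operator $\Delta+\lambda^2$ promote uniform convergence on a compact set to $C^1$-convergence on a slightly smaller one; as $T$ involves only first derivatives, $Th_N=\sum_{n\le N}(a^+_n\Mat^+_n[\lambda]+a^-_n\Mat^-_n[\lambda])\to Th=F$ uniformly on compact subsets. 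Equivalently, $F-Th_N$ is $\lambda$-metamonogenic with scalar part $h-h_N\to0$, and its vector part $-\lambda^{-1}\nabla(h-h_N)$ is controlled by the interior gradient estimate.

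The hard part will be the Hilbert-space assertion, where interior convergence is not enough and one must control the $L^2$-mass up to the boundary $\xi=\xi_0$. Using the conformal scale factor of the coordinates, $\|F-Th_N\|_{\xi_0}^2$ equals
\[
\int_0^{\xi_0}\!\!\int_0^{2\pi}\Bigl[(h-h_N)^2\,\tfrac12(\cosh2\xi-\cos2\eta)+\tfrac{1}{\lambda^2}\bigl((\partial_\xi(h-h_N))^2+(\partial_\eta(h-h_N))^2\bigr)\Bigr]\,d\eta\,d\xi,
\]
and the assumption $F\in\M_2(\xi_0;\lambda)$ is exactly the finiteness of the analogous weighted $H^1$-quantity for $h$. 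The angular orthogonality of $\{\phi^\pm_n[q]\}$ diagonalizes in $n$ the $\partial_\xi$-term and the unweighted parts, yielding Parseval-type tails that vanish as $N\to\infty$; the only non-diagonal piece, produced by the $\cos2\eta$ in the weight, must be dominated by these using the same completeness. Pushing the one-dimensional completeness of Proposition \ref{prop:orthogonalmathieu} through both the $\xi$–$\eta$ coupling of the elliptical weight and the differentiation in $T$, so as to obtain genuine $L^2(\Omega_{\xi_0})$ rather than merely interior convergence, is the main obstacle; once it is resolved, the closedness of $\M_2(\xi_0;\lambda)$ from Proposition \ref{prop:convergence} completes the proof.
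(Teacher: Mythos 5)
Your first three paragraphs reproduce, in more detail, exactly the paper's argument: the paper also defines $g=-\lambda^{-1}(\D-\lambda)f_0$ (your operator $T$, equation \eqref{Function_construction}), identifies $g=f$ via Lemma \ref{lemm:zeroscalar}, expands the scalar part in the $\zeta^\pm_n[\lambda^2/4]$, and passes the expansion through the first-order operator term by term (the paper cites Proposition \ref{prop:convergence}(i) where you invoke interior elliptic estimates for $\Delta+\lambda^2$ --- the same mathematical content). Your separation-of-variables derivation of the scalar expansion, with the symmetry conditions (b), (c) of $\S(\xi_0)$ forcing $c^+_n{}'(0)=0$ and $c^-_n(0)=0$ and thereby selecting $c^\pm_n=a^\pm_n\psi^\pm_n[q]$, is correct and in fact fills in a step the paper compresses into a bare citation of Proposition \ref{prop:orthogonalmathieu}. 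So for the locally-uniform assertion your proposal is sound and matches the paper.

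The genuine gap is that your final paragraph does not prove the second assertion of the theorem: you correctly derive the norm identity $\|F\|_{\xi_0}^2=\int\bigl[f_0^2\,\tfrac12(\cosh2\xi-\cos2\eta)+\lambda^{-2}(|\partial_\xi f_0|^2+|\partial_\eta f_0|^2)\bigr]\,d\xi\,d\eta$, but then defer the decisive estimate (``the main obstacle; once it is resolved\dots''), so the $L^2$ completeness --- half the theorem --- is left unestablished. Concretely: the scalar term of the tail can indeed be handled, since angular orthogonality diagonalizes the unweighted $L^2$ norm in $\eta$ and Parseval plus dominated convergence kills the tail (the weight being bounded on $R_{\xi_0}$, and weighted/unweighted norms comparable near $\xi=\xi_0$); but the gradient term is genuinely non-diagonal, because $\int_0^{2\pi}\phi_n'\phi_m'\,d\eta=-2q\int_0^{2\pi}\cos2\eta\,\phi_n\phi_m\,d\eta\neq0$ for distinct indices of the same parity, and bounding these cross terms up to the boundary $\xi=\xi_0$ requires more than $\ell^2$ decay of the coefficients. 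Your proposal names this difficulty but supplies no mechanism to overcome it. For comparison, the paper itself disposes of this case in a single sentence (``The completeness in $L^2$ is proved similarly, using Proposition \ref{prop:convergence}(ii)''), i.e., it reruns the scalar-reduction argument with the $L^2$ completeness of Proposition \ref{prop:orthogonalmathieu} and uses the implication from $L^2$ to locally uniform convergence to identify limits, without carrying out the boundary tail estimate you set up; so while your instinct that this is the delicate point is defensible, as written your proof establishes only the first assertion and must be counted incomplete against the stated theorem.
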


\begin{proof}
  Let $f\in \M(\xi_0;\lambda)$ be arbitrary and write
  $f=f_0+\ei f_1+\ej f_2$. Then by Proposition
  \ref{prop:orthogonalmathieu}, we may approximate the function
  \begin{equation*}
 f_0 = \sum_{n=0}^\infty a^\pm_n \zeta^\pm_n[q]
 \end{equation*}
 as a real linear combination of the scalar parts of the
 $\Mat^\pm_n[\lambda]$, where $q=\lambda^2/4$, the sum converging uniformly
 on $[0,\xi_0-\epsilon)\times[0,2\pi]$ for every $\epsilon>0$. Define
 \begin{equation} \label{Function_construction}
 g = -\frac{1}{\lambda}(\D-\lambda)f_0,
 \end{equation}
 so $g=g_0+\ei g_1+\ej g_2 \in\M(R_{\xi_0};\lambda)$ and $g_0=f_0$. By Lemma
 \ref{lemm:zeroscalar}, $g=f$. By Proposition \ref{prop:convergence}(i),
 \begin{equation*}
 g_1 = -\frac{1}{\lambda}\sum_{n=0}^\infty a_n^\pm
   [(D-\lambda)\zeta^\pm_n[q]]\,]_1 = \sum_{n=0}^\infty a_n^\pm [\Mat^\pm_n[\lambda]]_1
  \end{equation*}
   converging uniformly on compact subsets. A similar formula holds for $g_2$. Therefore
 \begin{equation*}
 g = \sum_{n=0}^\infty a_n \Mat^\pm_n[\lambda].
 \end{equation*}
 Thus $f$ is approximated uniformly on compact subsets.

 The completeness in $L^2$ is proved similarly, using  
Proposition \ref{prop:convergence}(ii). 
\end{proof}

\subsection{Limiting behavior of RQM-functions viewed in a disk} \label{subsec:limiting_RQMfunctions}

We consider now the approximation of a disk by ellipses, and the corresponding behavior of the RQM-functions.
Let $0<\mu<1$, and define $\xi_\mu$ by
\begin{align} \label{eq:defximu}
   \mu\cosh\xi_\mu=1.
\end{align}
Since $\xi_\mu\to\infty$ as $\mu\to0$, the corresponding elliptical
domain becomes arbitrarily large, so we rescale it by a constant
factor and consider
\begin{equation*}
\mu \Omega_{\xi_\mu} = \{ \mu z \colon z\in  \Omega_{\xi_\mu} \}.
\end{equation*}
Then it is readily seen that
$\mu_1\Omega_{\xi_{\mu_1}}\subseteq \mu_2\Omega_{\xi_{\mu_2} }$ when
$\mu_1\le\mu_2$, and that
\begin{equation*}
\bigcup_{0<\mu<1}  \mu \Omega_{\xi_\mu} = B_1(0),
\end{equation*}
where $B_1(0)=\{z\colon |z|<1\}$ is the unit disk.

For fixed $w=u+iv\in B_1(0)$, let $z=w/\mu$, which is in
$\Omega_\mu$ for sufficiently small $\mu$. One may see from
the relation $u^2+v^2+\mu^2=\mu^2(\cosh^2\xi+\cos^2\eta)$ that the
limiting values of $\mu\cosh\xi$ and $\mu\sinh\xi$ as $\mu\to0$ is $|w|$, and thus  
\eqref{eq:coordinates}  indeed tends to
$u=|w|\cos (\arg w)$, $v=|w|\sin(\arg w)$.

Before proceeding with the asymptotic analysis of the
$ \Mat^\pm_n[q]$, we will need some asymptotic relations of their
component functions.  In \cite[pp.\ 368--369]{McLachlan1951}, the
following expressions are used,
\begin{align*}
  p_{2l}'(q) &= \frac{ (-1)^l\ce_{2l}(0,q)\ce_{2l}(\pi/2,q)}
               {A_0^{(2l)}(q)},  \nonumber \\
  p_{2l+1}'(q) &= \frac{ (-1)^{l+1}\ce_{2l+1}(0,q)\ce_{2l+1}'(\pi/2,q)}
               {\sqrt{q}A_1^{(2l+1)}(q)},  \nonumber\\
  s_{2l+1}'(q) &= \frac{ (-1)^{l}\se_{2l+1}'(0,q)\se_{2l+1}(\pi/2,q)}
               {\sqrt{q}B_1^{(2l+1)}(q)}, \nonumber \\
  s_{2l+2}'(q) &= \frac{ (-1)^l\se_{2l+2}'(0,q)\se_{2l+2}'(\pi/2,q)}
               {qB_0^{(2l+2)}(q)}, 
\end{align*}
for $\l\ge0$. (Here the primes do not refer to derivatives.) We do not need the definitions of the $A_m^{(n)}(q)$,
$B_m^{(n)}(q)$ as Fourier coefficients of $\ce_n$, $\se_n$, but only the
asymptotic relations \cite[p.\ 185]{McLachlan1951}:
\begin{align*}
  A_0^{(2l)}(q) &\sim \frac{q^l}{2^{2l-1}(2l)!}, \quad
    A_1^{(2l+1)}(q) \sim \frac{q^l}{2^{2l}(2l)!}, \\
  B_1^{(2l+1)}(q) &\sim \frac{q^l}{2^{2l}(2l)!}, \quad
  B_2^{(2l+2)}(q) \sim \frac{(l+1)q^l}{2^{2l}(2l+1)!},
\end{align*}
as $q\to0$, for all $l\ge0$, except (not noted in
\cite{McLachlan1951}) that $A_0^0\sim1$. Let $\beta_0=1/\sqrt{2}$ and
for $n\ge1$, let
\begin{align}
  \beta_n = 2^{n-1}n!.
\end{align}

\begin{Lemma}\label{lemm:asympgps}
  $q^{n/2}p_n'(q)\to \beta_n$ and $q^{n/2}s_n'(q)\to \beta_n$ as
  $q\to0$.  
\end{Lemma}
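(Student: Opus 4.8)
The goal is to establish the asymptotic relations $q^{n/2}p_n'(q)\to\beta_n$ and $q^{n/2}s_n'(q)\to\beta_n$ as $q\to0$. The strategy is to substitute the explicit formulas for $p_n'(q)$ and $s_n'(q)$ given just before the statement, and then evaluate each factor asymptotically as $q\to0$, treating the cases $n=0$, $n$ even $\ge2$, and $n$ odd separately as dictated by the four defining formulas. The main inputs are the listed asymptotics for $A_0^{(2l)}$, $A_1^{(2l+1)}$, $B_1^{(2l+1)}$, $B_2^{(2l+2)}$ (with the correction $A_0^0\sim1$), together with the limiting values of the Mathieu functions $\ce_n$, $\se_n$ and their $\eta$-derivatives at $\eta=0$ and $\eta=\pi/2$ as $q\to0$.

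\medskip

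The plan is as follows. First I would record the $q\to0$ limits of the angular Mathieu functions themselves, using the normalization fixed in the preliminaries ($L^2$-norm $\sqrt\pi$ except $\ce_0\equiv1$): as $q\to0$ the equation \eqref{eq:mathieu1} degenerates to $\phi''+a\phi=0$, so $\ce_n(\eta,q)\to\cos n\eta$ (with $\ce_0\equiv1$) and $\se_n(\eta,q)\to\sin n\eta$, with the characteristic values $a_n,b_n\to n^2$. Consequently $\ce_n(0,q)\to1$ for $n\ge1$, $\ce_n(\pi/2,q)\to\cos(n\pi/2)$, $\ce_n'(\pi/2,q)\to -n\sin(n\pi/2)$, $\se_n'(0,q)\to n$, $\se_n(\pi/2,q)\to\sin(n\pi/2)$, and $\se_n'(\pi/2,q)\to n\cos(n\pi/2)$. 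Substituting these limits, together with the stated asymptotics for the Fourier coefficients, into each of the four formulas and multiplying by $q^{n/2}$, the powers of $q$ cancel exactly in every case, leaving a finite nonzero constant; I would then check that this constant equals $\beta_n=2^{n-1}n!$ (and $\beta_0=1/\sqrt2$ in the exceptional case $n=0$, where $A_0^0\sim1$ and $\ce_0\equiv1$ give $p_0'(q)\to1/\sqrt2$ after accounting for the norm of $\ce_0$).

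\medskip

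Carrying this out case by case: for $n=2l$ even, $q^{l}p_{2l}'(q)\sim q^{l}\,\frac{(-1)^l\ce_{2l}(0,q)\ce_{2l}(\pi/2,q)}{A_0^{(2l)}(q)}\sim q^{l}\,\frac{(-1)^l\cdot1\cdot(-1)^l}{q^l/(2^{2l-1}(2l)!)}=2^{2l-1}(2l)!=\beta_{2l}$, using $\ce_{2l}(\pi/2,0)=\cos(l\pi)=(-1)^l$. For $n=2l+1$ odd, $q^{(2l+1)/2}p_{2l+1}'(q)\sim q^{l+1/2}\,\frac{(-1)^{l+1}\ce_{2l+1}(0,q)\ce_{2l+1}'(\pi/2,q)}{\sqrt q\,A_1^{(2l+1)}(q)}$; here $\ce_{2l+1}'(\pi/2,0)=-(2l+1)\sin((2l+1)\pi/2)=-(2l+1)(-1)^l$, so the sign factors combine to $+1$ and the $q^{l+1/2}/\sqrt q=q^l$ cancels the $q^l$ in $A_1^{(2l+1)}$, yielding $(2l+1)\cdot2^{2l}(2l)!=2^{2l}(2l+1)!=\beta_{2l+1}$. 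The two $s_n'$ cases are handled identically using the $\se$-limits, and I would verify the matching power of $q$ and constant for each. The main obstacle, and the point deserving care, is pinning down the boundary values and derivatives of $\ce_n,\se_n$ at $\eta=0,\pi/2$ in the limit $q\to0$ under the chosen normalization (rather than the Whittaker–Watson one), and confirming that the correction $A_0^0\sim1$ with $\ce_0\equiv1$ produces the exceptional value $\beta_0=1/\sqrt2$ consistently; everything else is bookkeeping of powers of $q$ and signs.
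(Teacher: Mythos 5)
Your proposal follows essentially the same route as the paper's proof: both substitute the $q\to0$ limits of $\ce_n$, $\se_n$ and their $\eta$-derivatives at $\eta=0$ and $\eta=\pi/2$ into the displayed quotient formulas for $p_n'(q)$, $s_n'(q)$, and cancel the powers of $q$ against the listed asymptotics of $A_0^{(2l)}$, $A_1^{(2l+1)}$, $B_1^{(2l+1)}$, $B_2^{(2l+2)}$. The only real differences are that the paper cites McLachlan for the limits $\ce_n\to\cos(n\,\cdot\,)$, $\se_n\to\sin(n\,\cdot\,)$ and writes out only the $p_{2l}'$ case (``the remaining cases are similar''), whereas you justify those limits by degeneration of \eqref{eq:mathieu1} to $\phi''+a\phi=0$ with $a_n,b_n\to n^2$ and also carry out the odd case $p_{2l+1}'$ explicitly. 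Both of your displayed computations are correct (your denominator $q^l/(2^{2l-1}(2l)!)$ silently corrects a typo in the paper's display, which has $2^{l-1}$).

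However, the cases you defer as ``bookkeeping'' are exactly where the argument is delicate, and two of your claims about them fail as literally stated. First, for $s_{2l+2}'$: with the transcribed prefactor $(-1)^l$ and the limits $\se_{2l+2}'(0,q)\to 2l+2$, $\se_{2l+2}'(\pi/2,q)\to(2l+2)(-1)^{l+1}$, the numerator tends to $-(2l+2)^2$, and a short computation gives $q^{l+1}s_{2l+2}'(q)\to-\beta_{2l+2}$ --- the wrong sign. So this case is \emph{not} ``handled identically'': for the lemma to hold the prefactor must be $(-1)^{l+1}$, and one must also read the denominator coefficient as $B_2^{(2l+2)}$ rather than the printed $B_0^{(2l+2)}$ (only the former's asymptotics are given, and $\se_{2l+2}$ has no constant Fourier mode); the odd case $s'_{2l+1}$, by contrast, does come out to $+\beta_{2l+1}$ as claimed. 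Second, your parenthetical for $n=0$ does not compute: if $\ce_0$ tends to a constant $c$, then $A_0^{(0)}\to c$ as well, so $p_0'(q)\to c^2/c=c$; taking $\ce_0\equiv1$ and $A_0^{(0)}\sim1$ therefore yields $p_0'\to1$, not $1/\sqrt2$. The exceptional value $\beta_0=1/\sqrt2$ is consistent only with McLachlan's uniform normalization $\|\ce_0\|_{L^2[0,2\pi]}=\sqrt\pi$, i.e.\ $c=1/\sqrt2$, so ``accounting for the norm of $\ce_0$'' actually means abandoning the normalization $\ce_0\equiv1$; this should be stated explicitly rather than asserted as consistent. (Both wrinkles are inherited from the paper's own loose transcription, but since your proof claims to resolve them, the resolutions need to be carried out.)
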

\begin{proof}
  It is shown in \cite[p.\ 367]{McLachlan1951} that for fixed
  $w\in B_1(0)$, with the elliptical coordinates
  \eqref{eq:coordinates} depending on $\mu$ through $(x,y)$,
\begin{align*}
  \ce_n(\eta,q) &\to \cos(n\arg w),\quad \ce_n'(\eta,q) \to -n\sin(n\arg w),\\
  \se_n(\eta,q) &\to \sin(n\arg w),\quad \se_n'(\eta,q)  \to n\cos(n\arg w)
\end{align*}
as $q \to 0$. (The derivatives are with respect to the first variable.) Thus we have for $l>0$
\begin{equation*}
  p_{2l}'(q) \sim \frac{(-1)^l \cos(2l\,0) \cos(2l\,\pi/2)}{q^l/(2^{l-1}(2l)!)} = \frac{2^{2l-1}(2l)!}{q^l} = \beta_{2l}\,q^{-l}
\end{equation*}
as required.

The remaining cases are similar.
\end{proof}

As a final notational convenience, we will write
\begin{equation*} 
   \Phi^+_n(w) = \cos(n\arg w), \quad \Phi^-_n(w) = \sin(n\arg w).
\end{equation*}

\begin{Proposition} \label{prop:psiphilimits} Let $w\in B_1(0)$ be
  fixed, and for $0<\mu<1$, let $z=z(\mu)=w/\mu$ correspond to
  coordinates $(\xi,\eta)=(\xi(w,\mu),\eta(w,\mu))$ under
  \eqref{eq:coordinates}. Then $w\in\mu\Omega_{\xi_\mu}$ for
  sufficiently small $\mu$ (so that $\xi, \eta$ are defined). Suppose
  that $q=q(\mu)>0$ and $q(\mu)\to0$ as $\mu\to0$ in such a way that
  the finite positive limit
  $\alpha = \lim_{\mu\to0} 2\sqrt{q(\mu)}/\mu$ exists. Then
  \begin{align*}
    \phi^\pm_n[q](\eta(w,\mu)) &\to \Phi^\pm_n(w), \\
    \phi^\pm_n[q]'(\eta(w,\mu)) &\to \mp n\Phi^\mp_n(w) ,\\
   q^{n/2}\psi^\pm_n[q](\xi(w,\mu)) &\to
   \beta_n J_n(\alpha|w|), \\
   q^{n/2}\psi^\pm_n[q]'(\xi(w,\mu)) &\to
    \alpha \beta_n |w| J_n'(\alpha|w|)
 \end{align*}
 as $\mu\to0$, where $J_n$ is the Bessel function of the first kind of order $n$.
\end{Proposition}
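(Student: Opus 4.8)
The plan is to split the four claimed limits into two groups: the angular statements (the first two lines, concerning $\phi^\pm_n[q]$ and its first derivative) and the radial statements (the last two lines, concerning $q^{n/2}\psi^\pm_n[q]$ and its first derivative). The angular limits are essentially restatements of the convergence facts quoted in the proof of Lemma~\ref{lemm:asympgps}, namely $\ce_n(\eta,q)\to\cos(n\arg w)$, $\ce_n'(\eta,q)\to -n\sin(n\arg w)$, and the corresponding $\se_n$ relations, which McLachlan \cite[p.\ 367]{McLachlan1951} establishes as $q\to0$. Using the notation $\Phi^+_n(w)=\cos(n\arg w)$, $\Phi^-_n(w)=\sin(n\arg w)$ and recalling from \eqref{eq:psiphi} that $\phi^+_n[q]=\ce_n$, $\phi^-_n[q]=\se_n$, these read precisely $\phi^\pm_n[q](\eta)\to\Phi^\pm_n(w)$ and $\phi^\pm_n[q]'(\eta)\to\mp n\Phi^\mp_n(w)$. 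So the first two lines require only that I verify the sign bookkeeping in the $\mp$/$\Phi^\mp$ convention and confirm that $\eta(w,\mu)\to\arg w$, which follows from the coordinate analysis in Section~\ref{subsec:limiting_RQMfunctions} showing that \eqref{eq:coordinates} tends to $u=|w|\cos(\arg w)$, $v=|w|\sin(\arg w)$.

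For the radial limits I would work from the series representations of the modified Mathieu functions $\Ce_n$, $\Se_n$ in terms of products of Bessel functions, which is the natural bridge to $J_n$. The standard expansions (found in \cite{McLachlan1951}) express $\psi^\pm_n[q](\xi)$ as a sum $\sum_r c_r^{(n)}(q)\,J_{\cdot}(\sqrt{q}\,e^{\pm\xi})$ or, more usefully here, in the asymptotically dominant regime where $\sqrt{q}\cosh\xi$ stays bounded. The key observation is that $q(\mu)\to0$ while $\cosh\xi\to\infty$ in such a coordinated way that $2\sqrt{q}\,\cosh\xi\to 2\sqrt{q}\,|w|/\mu\to\alpha|w|$ by the hypothesis $\alpha=\lim 2\sqrt{q}/\mu$ together with $\mu\cosh\xi\to|w|$. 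Thus the Bessel arguments converge to $\alpha|w|$, and the normalization factors $p_n'(q)$, $s_n'(q)$ governing the leading term carry exactly the weight $q^{-n/2}\beta_n$ from Lemma~\ref{lemm:asympgps}. Multiplying $\psi^\pm_n[q](\xi)$ by $q^{n/2}$ and passing to the limit should collapse the series to the single dominant term $\beta_n J_n(\alpha|w|)$.

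The main obstacle, and where I would spend the most care, is justifying the termwise passage to the limit in the Bessel-function series: one must control the tail uniformly as $q\to0$ and $\xi\to\infty$ simultaneously, since neither the summation index nor the argument is held fixed. The cleanest route is to isolate the leading Bessel term using the asymptotic relations for $A_m^{(n)}(q)$, $B_m^{(n)}(q)$ quoted on p.\ 185 of \cite{McLachlan1951}, show that all higher terms carry strictly higher powers of $q$ (hence vanish after multiplication by $q^{n/2}$), and invoke the boundedness of $J_n$ and $J_n'$ on compact argument ranges to dominate the tail. For the derivative statement $q^{n/2}\psi^\pm_n[q]'(\xi)\to\alpha\beta_n|w|J_n'(\alpha|w|)$, I would differentiate the series in $\xi$, noting by the chain rule that $\partial_\xi\bigl(2\sqrt{q}\cosh\xi\bigr)=2\sqrt{q}\sinh\xi$, whose limit is $\alpha|w|$ by the same argument applied to $\mu\sinh\xi\to|w|$; this produces the extra factor $\alpha|w|$ multiplying $J_n'$. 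The differentiated series must be handled with the same uniform tail estimate, so the analytic heart of the proposition is a single dominated-convergence argument applied in two parallel forms.
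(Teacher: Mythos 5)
Your proposal is correct in substance, but for the radial half it takes a genuinely different route from the paper. The angular limits are handled identically in both: you and the authors simply invoke the convergence facts from \cite[p.\ 367]{McLachlan1951} already quoted in the proof of Lemma~\ref{lemm:asympgps}, together with $\eta(w,\mu)\to\arg w$, and check the sign conventions. For the radial limits, however, the paper's proof is a one-line citation: McLachlan \cite[p.\ 368]{McLachlan1951} states exactly the four normalized limits $q^{n/2}\Ce_n(\xi,q)\to\beta_n J_n(\alpha|w|)$, $q^{n/2}\Ce_n'(\xi,q)\to\alpha\beta_n|w|J_n'(\alpha|w|)$, and the $\Se_n$ analogues, and the authors take these as given. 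You instead undertake to re-derive them from the Bessel-series representations of $\Ce_n$, $\Se_n$, using the coefficient asymptotics for $A_m^{(n)}(q)$, $B_m^{(n)}(q)$, the normalization constants $p_n'$, $s_n'$ from Lemma~\ref{lemm:asympgps}, and the key coordinated limits $2\sqrt{q}\cosh\xi\to\alpha|w|$ and $2\sqrt{q}\sinh\xi\to\alpha|w|$. This mechanism is correct and your chain-rule accounting for the extra factor $\alpha|w|$ in the derivative limit is right. What your route buys is self-containedness and rigor at precisely the point where the paper itself concedes (in the Introduction) that McLachlan's treatment is loose; what it costs is that the analytic heart --- the uniform tail estimate in the simultaneous limit $q\to0$, $\xi\to\infty$ --- is announced but not executed. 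One simplification worth noting there: because the Bessel arguments remain in the bounded regime $2\sqrt{q}\cosh\xi\to\alpha|w|$, the series in $J_{2r}(2\sqrt{q}\cosh\xi)$ (and its odd-order counterparts) can be dominated using the uniform bound $|J_m|\le 1$ together with the decay in $q$ of the off-leading coefficients, so the dominated-convergence step you flag as the main obstacle is feasible, though for the differentiated series you must additionally verify uniformity in $\xi$ near the limiting value to justify interchanging differentiation and limit, a point your sketch correctly identifies but leaves open.
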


\begin{proof}
  The limits for $\phi^\pm_n$ were already remarked in the proof of
  Lemma \ref{lemm:asympgps}.  It was shown in \cite[p.\
  368]{McLachlan1951} that under our assumptions,
  \begin{align*}
   q^{n/2}\Ce_n(\xi,q) &\to \beta_n J_n(\alpha|w|), \\   
   q^{n/2}\Ce_n'(\xi,q) &\to \beta_n\alpha|w| J_n'(\alpha|w|), \\    q^{n/2}\Se_n(\xi,q) &\to \beta_n J_n(\alpha|w|), \\   
   q^{n/2}\Se_n'(\xi,q) &\to \alpha \beta_n |w| J_n'(\alpha|w|). 
  \end{align*}
 Thus we have the asserted asymptotic values of
  $\phi^\pm_n$, $\psi^\pm_n$, and their derivatives.
\end{proof}

We remark that some authors \cite{McLachlan1945,Sato2006,Sato2010}
have written the limit in a loose way, such as in
``$\Ce_n(\xi,q)\to p_n'J_n(\alpha|w|)$,'' leading to some loss of
clarity in expressing the asymptotic behavior.
 
We return to our study of RQM-functions. As $\mu \to 0$, under
controlled convergence of the eigenvalue $\lambda$, we may
obtain a limit of RQMs.

\begin{Theorem} \label{theorem:mathieulimits} Let $q=q(\mu)$ be as in
  Proposition \ref{prop:psiphilimits}. Let
  \[F^\pm_n[\alpha;\mu](x,y) = q(\mu)^{n/2}
    \Mat^\pm_n[2\sqrt{q(\mu)}](\xi,\eta)\] under the change of
  variables \eqref{eq:coordinates}. When $\mu\to 0$, the
  $2\sqrt{q(\mu)}$-metamonogenic functions
  $F^\pm_n[\alpha;\mu](w/\mu)$ tend pointwise in $B_1(0)$ to
  the $\alpha$-metamonogenic limit $F^\pm_n[\alpha]$ defined by
\begin{align*}
 F^\pm_n[\alpha](w) &= \beta_nJ_n(\alpha|w|)\Phi^\pm_n(w)\\
&\ \ - \frac{\beta_n}{|w|} \, (\ei \re w + \ej \im w) J_n'(\alpha|w|)\Phi^\pm_n(w) \\
 &\ \ \mp \frac{n\beta_n}{\alpha |w|^2} \, (\ei \im w - \ej \re w) J_n(\alpha|w|)\Phi^\mp_n(w). 
\end{align*}
\end{Theorem}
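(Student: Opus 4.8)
The plan is to begin from the explicit component expansion of Proposition \ref{prop:mathieucomponents} for $\Mat^\pm_n[2\sqrt q]$, multiply it by $q^{n/2}$, and pass to the limit $\mu\to0$ one factor at a time, feeding in the asymptotics of the Mathieu factors from Proposition \ref{prop:psiphilimits} together with the elementary limits of the geometric coefficients. Writing $\lambda=2\sqrt q$, the scalar part of $F^\pm_n[\alpha;\mu](w/\mu)$ is $q^{n/2}\psi^\pm_n[q](\xi)\,\phi^\pm_n[q](\eta)$, which by Proposition \ref{prop:psiphilimits} converges directly to $\beta_n J_n(\alpha|w|)\Phi^\pm_n(w)$, the first term of $F^\pm_n[\alpha]$. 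The substance of the proof is the vector part.

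First I would record the geometric limits for fixed $w\in B_1(0)$ as $\mu\to0$: namely $\mu\cosh\xi\to|w|$, $\mu\sinh\xi\to|w|$, $\cos\eta\to\re w/|w|$, $\sin\eta\to\im w/|w|$, and $\mu^2(\cosh2\xi-\cos2\eta)\to2|w|^2$, all consequences of $u^2+v^2+\mu^2=\mu^2(\cosh^2\xi+\cos^2\eta)$ noted before Proposition \ref{prop:psiphilimits}, together with $\lambda/\mu=2\sqrt q/\mu\to\alpha$ from the hypothesis. The decisive observation is that the prefactor $2/(\lambda(\cosh2\xi-\cos2\eta))$ vanishes like $\mu/(\alpha|w|^2)$ whereas $\sinh\xi$ and $\cosh\xi$ diverge like $|w|/\mu$. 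To handle this cleanly I would rewrite each hyperbolic factor as $\mu^{-1}(\mu\cosh\xi)$ or $\mu^{-1}(\mu\sinh\xi)$ and the prefactor as $\mu\cdot 2/\bigl((\lambda/\mu)\,\mu^2(\cosh2\xi-\cos2\eta)\bigr)$, so that the powers of $\mu$ cancel exactly and each coefficient becomes a product of factors having a finite limit. Distributing $q^{n/2}$ onto the radial factors and invoking $q^{n/2}\psi^\pm_n[q]'(\xi)\to\alpha\beta_n|w|J_n'(\alpha|w|)$, $q^{n/2}\psi^\pm_n[q](\xi)\to\beta_n J_n(\alpha|w|)$, $\phi^\pm_n[q](\eta)\to\Phi^\pm_n(w)$, and $\phi^\pm_n[q]'(\eta)\to\mp n\Phi^\mp_n(w)$, the products carrying $\psi'\phi$ (i.e.\ $\sinh\xi\cos\eta\,\psi'\phi$ in the $\ei$-slot and $\cosh\xi\sin\eta\,\psi'\phi$ in the $\ej$-slot) assemble into the second term of $F^\pm_n[\alpha]$, while the products carrying $\psi\phi'$ assemble, through the factor $\mp n$ from $\phi'$, into the third term.

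The principal obstacle is precisely this indeterminate $0\cdot\infty$ structure of the vector part: taken separately the prefactor tends to $0$ and the hyperbolic factors tend to $\infty$, so the limit is legitimate only after the regrouping above is performed, so that every surviving factor converges and no stray power of $\mu$ remains. Once the $\mu$-bookkeeping is organized — one $\mu^{-1}$ from each hyperbolic factor, one $\mu$ from the prefactor, and the $\mu^{-2}$ absorbed into $\mu^2(\cosh2\xi-\cos2\eta)$ — what is left is a routine application of the product rule for limits to the four asymptotic relations of Proposition \ref{prop:psiphilimits}, followed by a short simplification using $\cos\eta\to\re w/|w|$ and $\sin\eta\to\im w/|w|$. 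The center $w=0$, where $\arg w$ and the factors $1/|w|$, $1/|w|^2$ are formally singular, must be treated separately; there the convergence follows from the vanishing of $J_n$ at the origin for $n\ge1$ (and by direct inspection for $n=0$).

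Finally, to confirm that $F^\pm_n[\alpha]$ is genuinely $\alpha$-metamonogenic, I would verify the identity $F^\pm_n[\alpha]=-\frac{1}{\alpha}(D-\alpha)\bigl[\beta_n J_n(\alpha|w|)\Phi^\pm_n(w)\bigr]$, in the spirit of Definition \ref{Definition_RQM functions}: its scalar part $\beta_n J_n(\alpha|w|)\Phi^\pm_n(w)$ is the classical separated solution of $\Delta h+\alpha^2 h=0$ in polar coordinates, hence $\alpha$-metaharmonic, so applying $D+\alpha$ and using the factorization $(D+\alpha)(D-\alpha)=-(\Delta+\alpha^2)$ of Proposition \ref{prop:factorhelmholtz} yields $(D+\alpha)F^\pm_n[\alpha]=0$. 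Alternatively, noting that each $F^\pm_n[\alpha;\mu](w/\mu)$ is $(2\sqrt q/\mu)$-metamonogenic in $w$ with $2\sqrt q/\mu\to\alpha$, one may strengthen the pointwise convergence to local uniform convergence on compact subsets of $B_1(0)\setminus\{0\}$ and conclude by the Cauchy-integral argument of Proposition \ref{prop:convergence}(i).
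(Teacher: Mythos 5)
Your proposal is correct and follows essentially the same route as the paper's proof: the scalar part is immediate, and for the vector part you perform exactly the paper's regrouping of powers of $\mu$ (one $\mu$ onto each hyperbolic factor, $\mu^{-2}$ absorbed via $\mu^2(\cosh2\xi-\cos2\eta)\to2|w|^2$, and $\sqrt{q}/\mu\to\alpha/2$ in the prefactor) before applying the four asymptotics of Proposition \ref{prop:psiphilimits} factor by factor. Your closing verification of $\alpha$-metamonogenicity via the identity $F^\pm_n[\alpha]=-\frac{1}{\alpha}(D-\alpha)\bigl[\beta_n J_n(\alpha|w|)\Phi^\pm_n(w)\bigr]$ together with Proposition \ref{prop:factorhelmholtz} is just a tidier packaging of the paper's direct computation using Bessel's equation (the paper explicitly allows either a limiting argument or a direct computation), and your separate treatment of $w=0$ is a small extra care the paper leaves implicit.
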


Of course, if one replaces $\Mat^\pm_n[2\sqrt{q(\mu)}]$
with $\Mat^\pm_n[-2\sqrt{q(\mu)}]$, then $F^\pm_n[\alpha;\mu]$ (resp. $F^\pm_n[\alpha]$) will be replaced with $\overline{F^\pm_n[\alpha;\mu]}$ (resp. $\overline{F^\pm_n[\alpha]}$).

\begin{proof}  
By applying Proposition \ref{prop:psiphilimits} to Proposition \ref{prop:mathieucomponents}, it is immediate that the scalar parts of $F^\pm_n[\alpha](w)$ are as stated. For the vector parts, we recall that
$\mu\cosh\xi \to |w|$, $\mu\sinh\xi \to |w|$, $\mu^2\cosh2\xi\to2|w|^2$ as $\mu\to0$.  Thus for the common factor of the vector part,
\begin{equation*}
-\frac{1}{\mu \sqrt{q}(\cosh2\xi-\cos2\eta)}
  \to -\frac{1}{\alpha|w|^2}
\end{equation*}
while
\begin{equation*}
\ei \, q^{n/2} \big(\mu \sinh \xi \cos \eta \, \psi^{\pm}_n{\,}'(\xi) \phi^\pm_n(\eta) - \mu \cosh \xi \sin \eta \, \psi^\pm_n(\xi) \phi^\pm_n{\,}'(\eta) \big) \to
\end{equation*}
\begin{equation*}
\ei \big((\re w) \beta_n\alpha|w|J_n'(\alpha|w|)\Phi^\pm_m(w)
  - (\im w)\beta_nJ_n(\alpha|w|)(\mp n\Phi^\mp_m(w) \big),
  \end{equation*}
as well as
\begin{equation*}
\ej \, q^{n/2} \big(\mu \cosh \xi \sin \eta \, \psi^\pm_n{\,}'(\xi) \phi^\pm_n(\eta) + \mu \sinh \xi \cos \eta \, \psi^\pm_n(\xi) \phi^\pm_n{\,}'(\eta) \big)\to 
\end{equation*}
\begin{equation*}
\ej \big((\im w) \beta_n\alpha|z|J_n'(\alpha|w|)\Phi^\pm_m(w)
 + (\re w)\beta_nJ_n(\alpha|w|)(\mp n\Phi^\mp_m(w) \big).
\end{equation*}
After regrouping the terms, we have the limiting result.

To show that $F^\pm_n[\alpha]$ are $\alpha$-metamonogenic one can use a limiting argument or a direct computation as follows.  
\begin{align*}
\Sc \left((D + \alpha)F^\pm_n[\alpha]\right) =& \, \frac{\beta_n}{\alpha |w|^2} \, \Phi^\pm_n(w) \Bigl( (\alpha |w|)^2 J_n''(\alpha|w|) + \alpha |w| J_n'(\alpha|w|) \\
&+ ((\alpha |w|)^2 - n^2) J_n(\alpha|w|) \Bigr)\\
=& \, 0,
\end{align*}
which follows from Bessel's differential equation. One may verify similarly that $\VEC \left((D + \alpha)F^\pm_n[\alpha] \right) = 0$.
\end{proof}

An illustration of the approximation of $F^\pm_n[\alpha]$ by
$q(\mu)^{n/2} \Mat^\pm_n[2\sqrt{q(\mu)}]$ is given in Figure
\ref{fig:limitM}.

\begin{figure}[ht!]
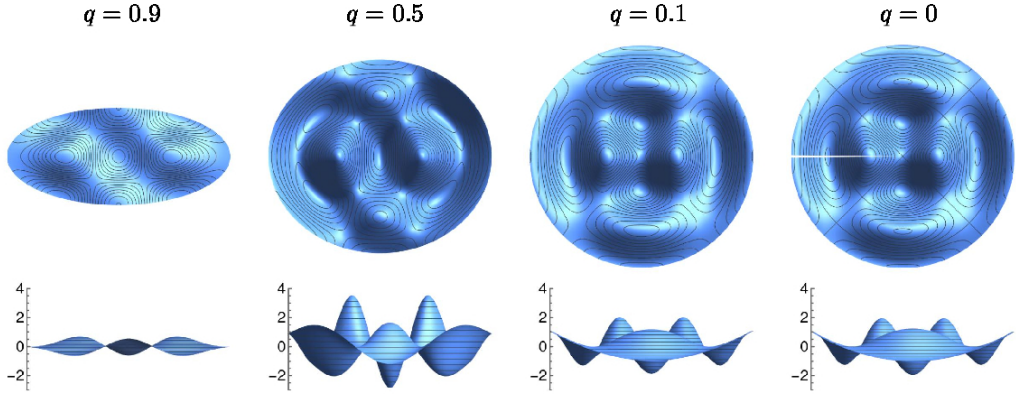

  \pic{fig_limitM}{0,-7.4}{4.3}{scale=.34}
  \caption{Illustration of the convergence of the scalar part
    (classical Mathieu functions) given by Proposition
    \ref{prop:psiphilimits}. \label{fig:limitM}}
\end{figure}

\section{Zero-boundary RQM-functions} \label{Zero-boundary_RQM-functions}

\subsection{Distinguished potentials $q^{\pm}_{n,m}(\xi_\mu)$}

In some applications, one is interested in planar Mathieu functions
which vanish on the boundary of the ellipse. This occurs in studies
of vibrating membranes \cite[p.\ 296]{McLachlan1951}, \cite[Ch.\
32]{ArfkenWeberHarris2013}, or of heat conduction in an elliptical
cylinder \cite{Sato2010}. The theory is somewhat different from what
we have seen so far because a complete system must be constructed with
functions corresponding to distinct eigenvalues $\lambda$.
 
It is shown in \cite{McLachlan1951} that for $\xi=\xi_0$ constant,
$\Ce_n(\xi,q)$ is an oscillating function of $q\in\R$ and therefore
has an increasing sequence of zeros, which we will denote by
\begin{equation*}
  q^{+}_{n,1}(\xi),\ q^{+}_{n,2}(\xi), \dots \ .  
\end{equation*}
A similar statement holds for $\Se_n(\xi,q)$ with zeros $q^{-}_{n,m}(\xi)$.

Let $\alpha_{n,m}\in\R$ denote the $m$-th positive zero of the
function $J_n$. As we saw in Section \ref{subsec:RQMfunctions},
$\Ce_n(\xi,q)$ and $\Se_n(\xi,q)$ are asymptotic to a nonvanishing
function times $J_n(\sqrt{q}\,\xi)$ as $\xi\to\infty$.  (In \cite[p.\
242]{McLachlan1951}, it is stated somewhat misleadingly that the
``asymptotic values'' of the zeros are $(n/2+m+3/4)\pi$, but this is
as $n\to\infty$, not for $\mu\to0$ when $n$ is fixed.) Consider the
specific value $\xi=\xi_\mu$ and note by \eqref{eq:defximu} that
$e^{-\xi_\mu}\sim2/\mu$ as $\mu\to0$. By Proposition
\ref{prop:psiphilimits} and the definition of $\alpha_{n,m}$, we have
the following characterization of $q^{\pm}_{n,m}(\xi_\mu)$ as
$\mu\Omega_\mu$ tends to the unit disk $B_1(0)$.
  
\begin{Lemma}\label{lemm:qlimit} For fixed $n\ge0$, $m > 0$, 
  $\displaystyle \frac{2}{\mu}\sqrt{q^{\pm}_{n,m}(\xi_\mu)}\to \alpha_{n,m}$
    as $\mu\to0$.
\end{Lemma}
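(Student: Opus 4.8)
The plan is to convert the statement about zeros in the variable $q$ into a statement about zeros of a rescaled family of functions that converges to $\beta_n J_n$. By definition $q^{+}_{n,m}(\xi_\mu)$ (resp.\ $q^{-}_{n,m}(\xi_\mu)$) is the $m$-th positive zero of the map $q\mapsto \Ce_n(\xi_\mu,q)=\psi^+_n[q](\xi_\mu)$ (resp.\ of $\psi^-_n[q](\xi_\mu)$). The substitution $q=\alpha^2\mu^2/4$ makes $2\sqrt q/\mu=\alpha$, so I would introduce, for $0<\mu<1$ and $\alpha>0$,
\[
  g^\pm_\mu(\alpha) = q^{n/2}\,\psi^\pm_n[q](\xi_\mu),\qquad q=\frac{\alpha^2\mu^2}{4}.
\]
Since $q^{n/2}\neq0$ for $q>0$, the positive zeros of $g^\pm_\mu$ in $\alpha$ are exactly the images $2\sqrt{q^\pm_{n,m}(\xi_\mu)}/\mu$ of the $q$-zeros, listed in the same increasing order. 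Thus it suffices to prove that the $m$-th positive zero of $g^\pm_\mu$ tends to $\alpha_{n,m}$ as $\mu\to0$.

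The key input is Proposition \ref{prop:psiphilimits}. Evaluating at $\xi=\xi_\mu$ places the relevant point on the boundary $|w|=1$ of the unit disk, because $\mu\cosh\xi_\mu=1$ by \eqref{eq:defximu}, which gives $2\sqrt q\,\cosh\xi_\mu=\alpha$; the Bessel-function limits quoted in the proof of Proposition \ref{prop:psiphilimits} hold for the sequence $\xi=\xi_\mu\to\infty$ under this scaling, so that
\[
  g^\pm_\mu(\alpha)\;\longrightarrow\;\beta_n J_n(\alpha)\qquad(\mu\to0)
\]
for each fixed $\alpha>0$. I would then upgrade this pointwise convergence to convergence uniform on compact subsets of $(0,\infty)$. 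Because $\psi^\pm_n[q](\xi_\mu)$ is analytic in $q$ and $q=\alpha^2\mu^2/4$ is analytic in $\alpha$, each $g^\pm_\mu$ extends to an entire even function of $\alpha$; local uniform boundedness of the family together with Vitali's theorem then promotes the pointwise limit to locally uniform convergence, on a complex neighborhood of $(0,\infty)$ if needed.

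Finally I would extract the convergence of the $m$-th zero. The zeros $\alpha_{n,m}$ of $J_n$ are simple, so $J_n$ changes sign at each and is bounded away from $0$ on any compact set avoiding them. Applying Hurwitz's theorem to the entire functions $g^\pm_\mu\to\beta_n J_n$ shows that, for every fixed $M$ and all sufficiently small $\mu$, $g^\pm_\mu$ has exactly one zero in a prescribed small neighborhood of each of $\alpha_{n,1},\dots,\alpha_{n,M}$ and no other zeros in $(0,\alpha_{n,M}+\epsilon)$. Since the positive zeros of $g^\pm_\mu$ and those of $J_n$ are both listed in increasing order, this forces the $m$-th zero of $g^\pm_\mu$ to lie in the neighborhood of $\alpha_{n,m}$ and hence to converge to it, which is the assertion. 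I expect the main obstacle to be the step upgrading the pointwise limit of Proposition \ref{prop:psiphilimits} to locally uniform convergence with a uniform-boundedness estimate: once that is secured, Hurwitz together with the monotone indexing of the zeros makes the bookkeeping (no spurious, merged, or lost zeros, and correct matching of indices) automatic. A secondary technical point is justifying the evaluation on the boundary $|w|=1$, which lies just outside the open disk to which Proposition \ref{prop:psiphilimits} is literally stated; I would handle this by appealing directly to the underlying asymptotics at $\xi=\xi_\mu$, or equivalently by a monotone limit $|w|\uparrow1$.
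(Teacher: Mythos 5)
Your proposal is correct in substance, and it is worth noting that it supplies considerably more than the paper does: the paper offers no argument at all for this lemma, simply asserting it as an immediate consequence of Proposition \ref{prop:psiphilimits} (i.e.\ of McLachlan's limits $q^{n/2}\Ce_n(\xi,q)\to\beta_n J_n(\alpha|w|)$, $q^{n/2}\Se_n(\xi,q)\to\beta_n J_n(\alpha|w|)$) together with the definition of $\alpha_{n,m}$. Your rescaling $q=\alpha^2\mu^2/4$, the reduction to zeros of $g^\pm_\mu(\alpha)=q^{n/2}\psi^\pm_n[q](\xi_\mu)$, and the Hurwitz argument with simple zeros of $J_n$ make rigorous precisely the zero-indexing bookkeeping (no spurious, merged, or lost zeros) that the paper glosses over; and your handling of the evaluation point is right, since at $\xi=\xi_\mu$ one has $2\sqrt{q}\cosh\xi_\mu=\alpha$ exactly by \eqref{eq:defximu}, and McLachlan's underlying asymptotics are stated for this scaling without any restriction to $|w|<1$, so no boundary-limit argument is actually needed. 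Two caveats on your analytic-continuation step. First, the claim that $g^\pm_\mu$ extends to an \emph{entire} function of $\alpha$ is not literally true: the Mathieu characteristics $a_n(q)$, $b_n(q)$ have branch points at nonreal values of $q$, so $\psi^\pm_n[q](\xi_\mu)$ is analytic only on a complex neighborhood of the real $q$-axis near the origin. This is harmless for your purposes, because $q=\alpha^2\mu^2/4\to0$ as $\mu\to0$, so for any fixed compact $\alpha$-set and all sufficiently small $\mu$ the functions $g^\pm_\mu$ are analytic on a fixed complex neighborhood of that set, which is all Vitali and Hurwitz require; but you should state it that way. Second, the local uniform boundedness needed for Vitali is asserted rather than proved — you correctly identify this as the crux — and it does need an input beyond Proposition \ref{prop:psiphilimits}, e.g.\ the uniformly convergent Bessel-product expansions of $\Ce_n$, $\Se_n$ from which McLachlan's limits are themselves derived. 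Alternatively, one can bypass complex analysis for everything except simplicity: locally uniform convergence on real compacta avoiding the $\alpha_{n,j}$ excludes stray zeros, sign changes of $J_n$ at its simple zeros force existence, and only the ``exactly one zero per neighborhood'' count uses Hurwitz.
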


Write $\zeta^\pm_{n,m}[\mu]=\zeta_n^\pm[q_{n,m}(\xi_\mu)]$. 

 \begin{figure}[t!]
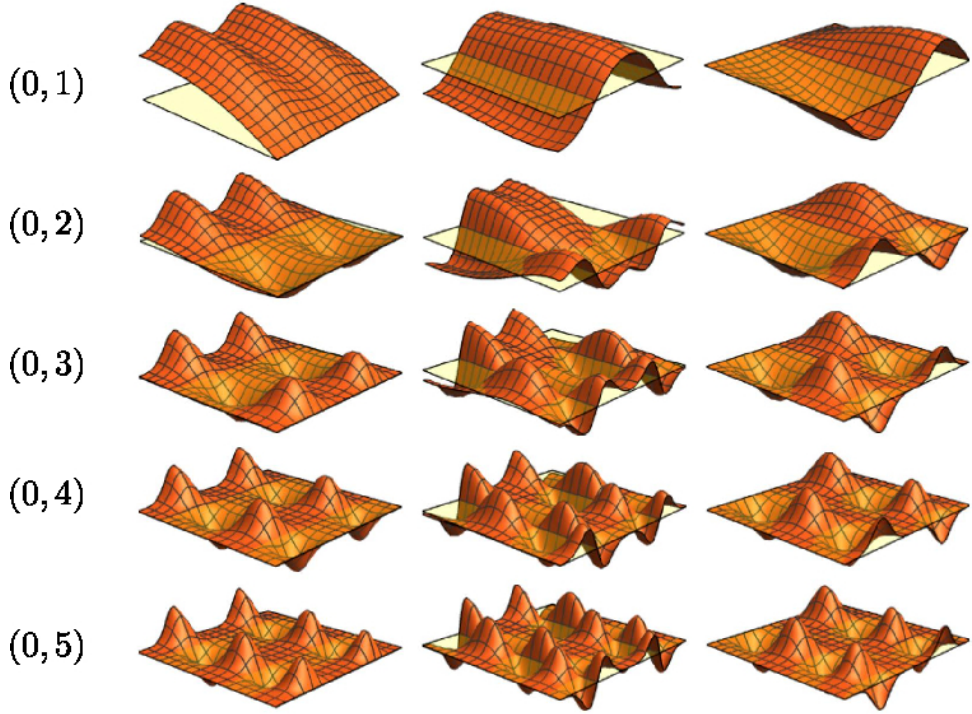

  \pic{fig_mplus_a}{.6,-13.7}{9}{scale=.48}
  \caption{ $Z^{+}_{0,m}[0.5]$ for low values of $m$.  In the left
    column it is observed that the scalar part always vanishes at the
    edge $\xi=\xi_\mu$. Vertical scales vary from image to
    image.\label{fig:mplus_a}}
\end{figure}

\begin{Proposition}[\cite{Gold1927,McLachlan1951}] \label{prop:scalarboundaryzero}
Let $0<\mu<1$. The doubly-indexed family of functions
\begin{equation*}  
\{\zeta^\pm_{n,m}[\mu]\colon n\ge0,\ m>0\}
\end{equation*}
enjoys the following completeness property. Let $F_0(x,y)=f_0(\xi,\eta)$
be any real-valued continuous function in the closed elliptical domain
$\overline{\Omega}_{\xi_\mu}$ such that $F_0(z)=0$
for every boundary point $z\in \partial \Omega_{\xi_\mu}$. Then $f_0$
may be expanded in the real Hilbert space $L^2(R_{\xi_{\mu}},\langle\ \rangle_{\xi_{\mu}})$  into a double
series in the functions $\zeta^\pm_{n,m}[\mu]$, which in fact form an
orthogonal basis of the closed Hilbert subspace that
they generate (i.e., the closure of the subspace of continuous functions with compact support).  Their norms squared are equal to
\begin{equation} \label{eq:norm}
\|\zeta^\pm_{n,m}[\mu]\|_{\xi_\mu}^2 = \int_0^{2\pi}\!\!\!\int_0^{\xi_{\mu}}
              \left(\zeta^\pm_{n,m}[\mu](\xi,\eta)\right)^2
              \frac{\mu^2}{2} (\cosh2\xi-\cos2\eta) \,d\xi d\eta.
\end{equation}
\end{Proposition}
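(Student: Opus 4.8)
The plan is to identify each $\zeta^\pm_{n,m}[\mu]$ as a Dirichlet eigenfunction of the elliptical Laplacian on $\Omega_{\xi_\mu}$, derive pairwise orthogonality from self-adjointness together with the one-dimensional orthogonality already available, and obtain completeness from the spectral theorem for the Dirichlet Laplacian after checking that separation of variables produces every eigenfunction. First I would record that $\zeta^\pm_{n,m}[\mu]$ is a genuine eigenfunction: Proposition \ref{prop:zetahelmholtz} gives $\L\zeta^\pm_{n,m}[\mu]=-4q^\pm_{n,m}(\xi_\mu)\,\zeta^\pm_{n,m}[\mu]$, and by the definition of $q^\pm_{n,m}(\xi_\mu)$ as a zero of $\Ce_n(\xi_\mu,\cdot)$ (resp.\ $\Se_n(\xi_\mu,\cdot)$) the radial factor vanishes at $\xi=\xi_\mu$, so $\zeta^\pm_{n,m}[\mu]$ meets the Dirichlet condition on $\partial\Omega_{\xi_\mu}$ while still obeying the symmetries (a), (b), (c) of $\S(\xi_\mu)$ at the degenerate slit $\xi=0$. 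The norm identity \eqref{eq:norm} then needs no separate argument: it is simply the weighted $L^2$ norm over the rescaled ellipse $\mu\,\Omega_{\xi_\mu}$, the factor $\mu^2$ being the Jacobian of $z\mapsto\mu z$.

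For orthogonality I would split on whether the two eigenvalues agree. If $q^\pm_{n,m}(\xi_\mu)\neq q^{\pm'}_{n',m'}(\xi_\mu)$, I would invoke Green's identity in the \emph{unweighted} Laplacian: since $\tfrac12(\cosh2\xi-\cos2\eta)\,\L=\partial_\xi^2+\partial_\eta^2$ by \eqref{eq:defLmu}, the weighted product $\langle\cdot,\cdot\rangle_{\xi_\mu}$ of two such eigenfunctions is proportional to the boundary integral $\int_{\partial R_{\xi_\mu}}(v\,\partial_n u-u\,\partial_n v)\,ds$. This integral vanishes: the piece on $\xi=\xi_\mu$ dies by the Dirichlet condition, the pieces on $\eta=0$ and $\eta=2\pi$ cancel by the periodicity (a), and the piece on $\xi=0$ cancels under $\eta\mapsto2\pi-\eta$ by exactly the displacement and gradient conditions (b), (c). If instead the eigenvalues coincide, the two functions must differ in angular index or in parity (equal index, parity, and $q$ give the same function, since the zeros $q^\pm_{n,m}(\xi_\mu)$ are distinct for distinct $m$). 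Here I would use the product form: expanding $\cosh2\xi-\cos2\eta$ splits $\langle\cdot,\cdot\rangle_{\xi_\mu}$ into two terms, each a purely radial integral times a purely angular integral. When the parities differ, both angular factors $\int_0^{2\pi}\phi^\pm_n[q]\phi^{\pm'}_{n'}[q]\,d\eta$ and $\int_0^{2\pi}\phi^\pm_n[q]\phi^{\pm'}_{n'}[q]\cos2\eta\,d\eta$ vanish by the even/odd symmetry in $\eta$; when the parities agree but the indices differ, the first term dies through the angular orthogonality of Proposition \ref{prop:orthogonalmathieu}(i), while the second dies through the one-dimensional Sturm-Liouville orthogonality $\int_0^{\xi_\mu}\psi^\pm_n[q]\psi^{\pm'}_{n'}[q]\,d\xi=0$, valid because the two radial factors solve the radial Mathieu equation with the common $q$ but distinct separation constants and share the data $\psi(\xi_\mu)=0$ together with $\psi'(0)=0$ (even) or $\psi(0)=0$ (odd).

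Completeness is the substantive step and the one I expect to be the main obstacle. Because $\Omega_{\xi_\mu}$ is bounded with smooth boundary, the Dirichlet Laplacian has compact resolvent, so its eigenfunctions form an orthogonal basis of $L^2(\Omega_{\xi_\mu})$, which in the weighted coordinates is exactly the closure of $C_c(\Omega_{\xi_\mu})$ named in the statement; the expansion of a continuous $F_0$ vanishing on the boundary is then a special case. What remains is to verify that separation of variables omits no eigenfunction. Fixing an eigenvalue $4q$ and expanding an arbitrary eigenfunction $u(\xi,\cdot)$ in the complete angular system $\{\phi^\pm_n[q]\}$ of Proposition \ref{prop:orthogonalmathieu}(i), I would project the eigenvalue equation onto each $\phi^\pm_{n_0}[q]$; using that $\phi^\pm_{n_0}[q]$ is an angular eigenfunction with characteristic $a_{n_0}(q)$ (resp.\ $b_{n_0}(q)$), the coefficient $\xi\mapsto\langle u(\xi,\cdot),\phi^\pm_{n_0}[q]\rangle$ is forced to solve the radial Mathieu equation, the symmetries at $\xi=0$ pin it to a multiple of $\psi^\pm_{n_0}[q]$, and the Dirichlet condition makes that multiple vanish unless $q=q^\pm_{n_0,m}(\xi_\mu)$ for some $m$. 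Thus every eigenfunction lies in the span of the $\zeta^\pm_{n,m}[\mu]$ sharing its eigenvalue, which with the spectral completeness proves the claim; alternatively one cites the classical analyses \cite{Gold1927,McLachlan1951}. The delicate points are the bookkeeping around possible accidental degeneracies $q^\pm_{n,m}(\xi_\mu)=q^{\pm'}_{n',m'}(\xi_\mu)$ — harmless for orthogonality by the analysis above but needing care in matching eigenspace dimensions — and the regularity of $u$ that legitimizes the angular projection, for which the uniform-convergence half of Proposition \ref{prop:orthogonalmathieu}(i), not merely its $L^2$ half, is what one leans on.
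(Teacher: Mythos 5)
The paper contains no proof of this proposition at all: it is imported from the classical literature, with the citations to Goldstein and McLachlan doing the entire job (the authors even remark elsewhere that McLachlan's rigor is ``rather loose''). Your proposal is therefore not a variant of an argument in the paper but a self-contained reconstruction, and it is essentially correct. Your orthogonality split is sound: Green's identity for the flat Laplacian on the rectangle handles distinct eigenvalues, with the boundary terms at $\xi=0$ killed by exactly the symmetries (b), (c), and the coincident-eigenvalue case reduces via the product structure to angular parity/orthogonality plus the radial Sturm--Liouville identity --- where you should note explicitly that the Wronskian bracket vanishes at $\xi_\mu$ precisely because a common $q$ is by definition a zero of \emph{both} radial factors there, and that distinctness of the characteristics $a_n(q)$ (resp.\ $b_n(q)$) within a parity class for fixed $q>0$ is what makes the Sturm--Liouville step nontrivial (the classical interlacing $a_0<b_1<a_1<b_2<\cdots$ supplies this). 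Your completeness argument --- compact resolvent of the Dirichlet Laplacian, then a projection onto the angular system $\{\phi^\pm_n[q]\}$ in which the $\cos2\eta$ cross-terms cancel against the angular Mathieu equation, leaving the radial equation for each coefficient, with the slit symmetries forcing $c_n'(0)=0$ (even) or $c_n(0)=0$ (odd) and the Dirichlet condition forcing $q=q^\pm_{n,m}(\xi_\mu)$ --- is the right modern substitute for the classical expansion theorems; the degeneracy bookkeeping you flag closes itself, since finite multiplicity of each Dirichlet eigenvalue caps the number of pairs $(n,m)$ sharing a $q$, and your projection shows the separated products span each eigenspace. Two small reconciliations with the paper: the norm formula \eqref{eq:norm} is indeed definitional once one accepts, as you do, that the $\mu^2$ is the Jacobian of the rescaling $z\mapsto\mu z$ (the paper's \eqref{eq:weighted} lacks this factor, so your gloss actually resolves an inconsistency in the text, and matches the weight used in Theorem \ref{th:Zorthogonal}); and your conclusion is formally stronger than the stated one, since the closure of the compactly supported continuous functions is all of $L^2$, so the hedged ``closed subspace they generate'' is the whole space --- the continuity and boundary-vanishing hypotheses in the statement matter only for the stronger pointwise/uniform expansion theorems in the cited references, which your $L^2$ route does not reproduce but also does not need.
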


The book \cite{McLachlan1951} corrects an error in the values of the
norms, which appears in the article \cite{McLachlan1945} by the same
author.
 
The RQM-functions whose scalar parts vanish on the boundary are now
constructed with the particular parameters $q^{\pm}_{n,m}$.
\begin{Definition} \label{Definition_zero-boundary_RQM-functions}
For $0<\mu<1$, the \textit{zero-boundary} RQM-functions $Z_{n,m}^\pm[\mu]$ are defined as follows:
  Let $n\ge0$, $m > 0$. Then
  \begin{equation} \label{RQM-functions}
  Z_{n,m}^\pm[\mu] =
    \Mat^\pm_{n}[\,2\sqrt{q^{\pm}_{n,m}(\xi_\mu)}\,].
  \end{equation}
  \end{Definition}

  Thus $\Sc Z_{n,m}^\pm[\mu] =\zeta^\pm_{n,m}[\mu]$. Although the
  $Z_{n,m}^\pm[\mu]$ are defined in all of $[0,\infty)\times[0,2\pi]$,
  their natural domain may be considered to be $R_{\xi_\mu}$.
  
 We illustrate some of the lowest RQM functions over two potential
periods and for a few $q$-values calculated with \textit{Mathematica}.
Each row in Figures \ref{fig:mplus_a} and \ref{fig:mplus_b} shows the
scalar, $\ei$, and $\ej$ components of a certain
$Z^{+}_{n,m}[\mu](\xi,\eta)$.
  
\begin{figure}[b!]
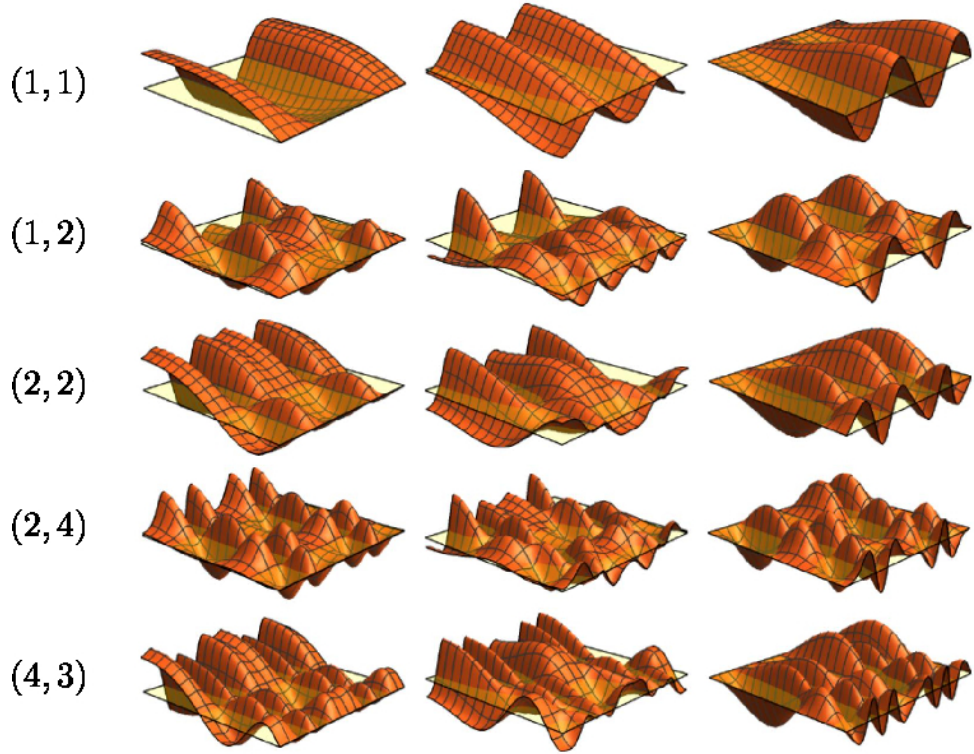
 
   \pic{fig_mplus_b}{.6,-14.5}{9.5}{scale=.48}
   \caption{ $Z^{+}_{n,m}[0.5]$ for selected  $(n,m)$ with $n>0$.
    \label{fig:mplus_b}}
\end{figure}

\begin{figure}[b!]
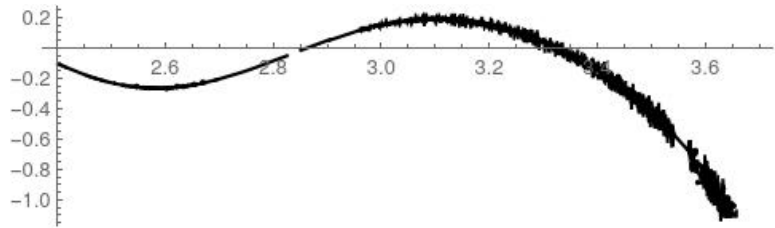

  \pic{fig_badgraph}{2.2,-5.3}{3.3}{scale=.8}
  \caption{Values of $\Ce_2(\xi,q^+_{1,1}(\xi_\mu))$ calculated using
    machine precision arithmetic (with \textit{Mathematica}) for
    $\mu=0.1$. This illustrates the challenge of calculating of
    successive zeros $q^+_{n,m}$ as $m$
    increases.\label{fig:badgraph}}
\end{figure}
  
\subsection{Orthogonality and completeness}

In this part, we prove the orthogonality and completeness of the zero-boundary functions \eqref{RQM-functions}
over the rectangle \eqref{eq:Rectangle} with respect to the inner
product \eqref{eq:weighted} of quaternionic functions. We begin
with the orthogonality.

\begin{Theorem}[Orthogonality] \label{th:Zorthogonal} For fixed $\mu \in (0, 1)$, the collection
  of zero-boundary RQM-functions
\begin{equation} \label{eq:RQMset}
  \{ Z^+_{n,m}[\mu], \, Z^-_{n,m}[\mu]
  \colon  n=0,1,\dots,\  m=1,2,\dots \}
\end{equation}
is orthogonal over the rectangle $R_{\xi_\mu}$ in the
sense of the weighted $L^2$-inner product \eqref{eq:InnerProduct}, and their norms squared are equal to
\begin{align*}
  \|Z^{\pm}_{n,m}[\mu]\|^2_{\xi_\mu} =& \;
                               \|\zeta^\pm_{n,m}[\mu]\|_{\xi_\mu}^2+
                                        \frac{\mu^2}{4 q_{n,m}} \, (1+\delta_{0,n})\pi \left(\int_0^{\xi_\mu} \left(\psi^\pm_{n}[q_{n,m}]'(\xi)\right)^2 d\xi \right) \\
              &+ \frac{\mu^2}{4 q_{n,m}} \left(\int_0^{2\pi} \left(\phi^\pm_{n}[q_{n,m}]'(\eta)\right)^2 d\eta \right) \left(\int_0^{\xi_\mu} \left(\psi^\pm_{n}[q_{n,m}](\xi)\right)^2 d\xi \right),
\end{align*} 
where $\|\zeta^\pm_{n,m}[\mu]\|_{\xi_\mu}$ is given by \eqref{eq:norm}. Here $\delta_{0,n}$ is the  Kronecker symbol.
\end{Theorem}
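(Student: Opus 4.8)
The plan is to exploit that each zero-boundary RQM-function is a scalar function minus a multiple of its own gradient. Writing $u=\Sc Z^\pm_{n,m}[\mu]=\zeta^\pm_{n,m}[\mu]$ and $\lambda=2\sqrt{q_{n,m}}$, the defining relation $\Mat^\pm_n[\lambda]=-\tfrac1\lambda(\D-\lambda)\zeta^\pm_n[\lambda^2/4]$ together with Proposition \ref{prop:mathieucomponents} gives $Z^\pm_{n,m}[\mu]=u-\tfrac1\lambda\D u$, where by Proposition \ref{prop:Dell} the pure vector $\D u$ has Cartesian components $(u_x,u_y)$. First I would record the pointwise identity, valid for any two functions $F=u-\tfrac1\lambda\D u$ and $G=v-\tfrac1{\lambda'}\D v$ of this form: since $\overline F=u+\tfrac1\lambda\D u$ and the scalar part of a product of two pure vectors is minus their dot product,
\[
\Sc\big(\overline F\,G\big)=uv+\frac{1}{\lambda\lambda'}\,\nabla u\cdot\nabla v .
\]
Integrating against the weighted measure of \eqref{eq:norm} thus splits $\langle F,G\rangle_{\xi_\mu}$ into a scalar $L^2$-term and a Dirichlet (gradient) term.

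Next I would handle the Dirichlet term. Because the elliptical coordinates are conformal, its weight cancels, so that $\int\!\!\int_{\Omega_{\xi_\mu}}\nabla u\cdot\nabla v\,dx\,dy=\int_0^{2\pi}\!\!\int_0^{\xi_\mu}(u_\xi v_\xi+u_\eta v_\eta)\,d\xi\,d\eta$. For the orthogonality I would then apply Green's first identity in the Cartesian ellipse $\Omega_{\xi_\mu}$. The decisive point is that every scalar part vanishes on $\partial\Omega_{\xi_\mu}$: by construction $q^\pm_{n,m}(\xi_\mu)$ is a zero of $q\mapsto\psi^\pm_n[q](\xi_\mu)$, so $u\equiv0$ on $\xi=\xi_\mu$. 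Hence the boundary integral drops, and the Helmholtz equation $\Delta v=-{\lambda'}^2 v$ of Proposition \ref{prop:zetahelmholtz} converts the Dirichlet integral into ${\lambda'}^2\int\!\!\int uv$.

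With these ingredients the orthogonality is immediate and uniform. For any two distinct index triples, Proposition \ref{prop:scalarboundaryzero} gives $\langle u,v\rangle_{\xi_\mu}=0$ for the scalar parts; the previous step then forces the Dirichlet contribution to vanish as well, so $\langle Z^\pm_{n,m}[\mu],Z^\pm_{n',m'}[\mu]\rangle_{\xi_\mu}=0$. A pleasant feature of this route is that one never has to separate the cases of coincident and distinct eigenvalues $q_{n,m}$.

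Finally, for the norm I would evaluate the Dirichlet term directly on the diagonal $F=G$. With $u=\psi^\pm_n[q_{n,m}](\xi)\,\phi^\pm_n[q_{n,m}](\eta)$ the integrand separates, giving
\[
\int_0^{2\pi}\!\!\int_0^{\xi_\mu}\!\big(u_\xi^2+u_\eta^2\big)\,d\xi\,d\eta=\Big(\!\int_0^{2\pi}\!(\phi^\pm_n)^2\Big)\!\Big(\!\int_0^{\xi_\mu}\!((\psi^\pm_n)')^2\Big)+\Big(\!\int_0^{2\pi}\!((\phi^\pm_n)')^2\Big)\!\Big(\!\int_0^{\xi_\mu}\!(\psi^\pm_n)^2\Big).
\]
Inserting the angular normalization $\int_0^{2\pi}(\phi^\pm_n)^2\,d\eta=(1+\delta_{0,n})\pi$ and multiplying by $\mu^2/\lambda^2=\mu^2/(4q_{n,m})$ reproduces exactly the two extra terms of the stated formula, while the scalar term contributes $\|\zeta^\pm_{n,m}[\mu]\|_{\xi_\mu}^2$ as in \eqref{eq:norm}. (Green's identity with $u=v$ shows this Dirichlet contribution equals $\|\zeta^\pm_{n,m}[\mu]\|_{\xi_\mu}^2$, so the norm is simply $2\|\zeta^\pm_{n,m}[\mu]\|_{\xi_\mu}^2$.) The hard part will be the analytic justification of Green's identity across the coordinate singularity at the foci and the periodic seam $\eta=0\sim2\pi$: one must check that the $\zeta^\pm_{n,m}[\mu]$ extend to genuine smooth solutions of the Helmholtz equation on the closed ellipse, so that $\partial\Omega_{\xi_\mu}$ is the only boundary and the boundary integral is precisely the one over $\xi=\xi_\mu$. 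This is ensured by the symmetry conditions of Definition \ref{defi:S} together with Proposition \ref{prop:zetahelmholtz}, and the remaining $\mu^2$-bookkeeping is routine.
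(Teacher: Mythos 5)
Your proposal is correct, and it takes a genuinely different route from the paper's at the decisive step. The paper argues componentwise in elliptical coordinates: with $a=Z^\pm_{n_1,m_1}[\mu]$, $b=Z^\pm_{n_2,m_2}[\mu]$ it expands $a_1b_1+a_2b_2$ from Proposition \ref{prop:mathieucomponents}, cancels the cross terms containing $\cosh\xi\sinh\xi\cos\eta\sin\eta$, uses $\cosh2\xi-\cos2\eta=2(\cosh^2\xi-\cos^2\eta)$ to absorb the weight (this is your conformal-invariance observation, written in coordinates), and then separates the resulting integral into $\int\psi_1'\psi_2'\,d\xi\int\phi_1\phi_2\,d\eta+\int\psi_1\psi_2\,d\xi\int\phi_1'\phi_2'\,d\eta$, which it dismisses by the claimed relations $\int_0^{2\pi}\phi_1\phi_2\,d\eta=0=\int_0^{\xi_\mu}\psi_1\psi_2\,d\xi$. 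You instead keep the vector part intact as $-\tfrac1\lambda\D u$, identify the vector--vector contribution as the Dirichlet integral $\tfrac{1}{\lambda\lambda'}\int\nabla u\cdot\nabla v$, and convert it by Green's identity (zero boundary values of $u$ plus $\Delta v=-{\lambda'}^2v$) into $\tfrac{\lambda'}{\lambda}\langle u,v\rangle$, so that everything reduces to the two-dimensional scalar orthogonality of Proposition \ref{prop:scalarboundaryzero}, with the factor $1+\lambda'/\lambda>0$ harmless. This buys real rigor: the paper's separated one-dimensional relations are not literally valid for all index pairs --- for $n_1=n_2$, $m_1\neq m_2$ the angular factors are Mathieu functions of the same order but different $q$, which are not $L^2([0,2\pi])$-orthogonal, and the paper itself labels the radial system nonorthogonal in Proposition \ref{prop:orthogonalmathieu}(ii). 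What actually kills the separated expression is an integration by parts against the Mathieu ODEs using $\psi(\xi_\mu)=0$, periodicity in $\eta$, and parity at $\xi=0$, which is exactly the coordinate form of your Green's identity step; your argument thus supplies the justification the paper elides, and it indeed never needs to split off the case $\lambda$ coincident with some $2\sqrt{q_{n,m}}$. Your diagonal computation reproduces the stated norm via $\int_0^{2\pi}(\phi^\pm_n)^2\,d\eta=(1+\delta_{0,n})\pi$, and your parenthetical remark that the two extra terms sum to $\|\zeta^\pm_{n,m}[\mu]\|^2_{\xi_\mu}$, so that $\|Z^\pm_{n,m}[\mu]\|^2_{\xi_\mu}=2\|\zeta^\pm_{n,m}[\mu]\|^2_{\xi_\mu}$, is a correct simplification not noted in the paper. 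Finally, the regularity point you flag is resolved as you suggest: the symmetry conditions (a)--(c) of Definition \ref{defi:S} give a $C^1$ match across the focal segment and the seam $\eta=0\sim2\pi$, and elliptic regularity for the Helmholtz equation upgrades $\zeta^\pm_{n,m}[\mu]$ to a real-analytic solution near the closed ellipse, so Green's identity applies with $\partial\Omega_{\xi_\mu}$ as the only boundary.
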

\begin{proof}
  Suppose that $(n_1,m_1)\not=(n_2,m_2)$. We will treat the even and
  odd functions simultaneously. Since
  $\Sc(\overline{a}b)=\sum_0^2a_ib_i$ for $a,b\in\R^3$, by taking
  $a=Z^\pm_{n_1,m_1}[\mu](\xi,\eta)$ and
  $b=Z^\pm_{n_2,m_2}[\mu](\xi,\eta)$ (and choosing the $\pm$ signs
  independently) and referring to Proposition
  \ref{prop:mathieucomponents}, we observe first that
  \begin{align*}
    & \int_0^{2\pi} \!\! \int_{0}^{\xi_\mu}a_0b_0\,
      \frac{\mu^2}{2} (\cosh2\xi-\cos2\eta) \, d\xi d\eta =  0
  \end{align*}
  by Proposition \ref{prop:scalarboundaryzero}.

  For the remaining part of the inner product, a straightforward
  computation shows that
\begin{align*}
  a_1b_1 &= \frac{1}{\sqrt{q_1q_2}(\cosh2\xi-\cos2\eta)^2} \times\\
 & \ \big((\sinh^2\xi\cos^2\eta)\psi_1'\psi_2'\phi_1\phi_2-
  (\cosh\xi\sinh\xi\cos\eta\sin\eta)\psi_1\psi_2'\phi_1'\phi_2 \\
 &\   + (\cosh^2\xi\sin^2\eta)\psi_1\psi_2\phi_1'\phi_2' 
  -(\cosh\xi\sinh\xi\cos\eta\sin\eta)\psi_1'\psi_2\phi_1\phi_2'\big),
\end{align*}
where for simplicity we have suppressed the $\pm$ in the notation, and
have written $q_1$, $q_2$ in place of $q_{n_1,m_1}$,
$q_{n_2,m_2}$, etc. Similarly, $a_2b_2$ is an expression in
which the terms containing $\cosh\xi\sinh\xi\cos\eta\sin\eta$ have
the opposite sign as in $a_1b_1$ and cancel upon adding. Since
$\cosh2\xi-\cos2\eta = 2(\cosh^2\xi-\cos^2\eta)$, we arrive at the fact that
\begin{align*}
  \int_0^{2\pi}\!\!\!\int_0^{\xi_\mu}&  (a_1b_1+a_2b_2) \frac{\mu^2}{2} (\cosh2\xi-\cos2\eta) \,d\xi d\eta \\
   =\frac{\mu^{2}}{4\sqrt{q_1q_2}} &\int_0^{2\pi}\!\!\!\int_0^{\xi_\mu}(\psi_1'\psi_2'\phi_1\phi_2 + \psi_1\psi_2\phi_1'\phi_2') (\cosh2\xi-\cos2\eta) \, d\xi d\eta = 0
\end{align*}
by separating the integrals and applying the orthogonality relations
$\int_{0}^{2\pi} \phi_1\phi_2 d\eta = 0 = \int_{0}^{\xi_\mu} \psi_1\psi_2 d\xi$.
This shows the orthogonality of \eqref{eq:RQMset} and the norms are found by taking $(n_1,m_1)=(n_2,m_2)$.
\end{proof}

We now prove a completeness property of the set \eqref{eq:RQMset}.
This is relevant to modeling phenomena in an elliptical domain of
fixed ellipticity such as a lake, a uniform cable with elliptical
cross-section, modes in confocal annular elliptic resonators, etc.\
\cite{AlharganJudah1992,AlharganJudah1996,BaevaBaevKaplan1997,BrCoZa2021,HollandCable1992,Jeffreys1925,LewisDeshpande1979,Mathieu1868,Ruby1996}.

Consider the subcollection
\begin{align}
 \S_0(\xi_\mu) \subseteq\S(\xi_\mu)
\end{align}
of functions
$f\in\S(\xi_\mu)$ whose scalar part $f_0=\Sc f$ extends continuously
to the closed parametric rectangle $\overline{R}_{\xi_\mu}$ and
satisfies $f_0(\xi_\mu,\eta)=0$ for every $\eta\in[0,2\pi]$.

\begin{Theorem}[Completeness] \label{th:Zcomplete} Let
  $\lambda\in\R\setminus\{0\}$ and $0<\mu<1$. Suppose that
  $h \in \M_2(\xi_\mu;\lambda) \cap \S_0(\xi_\mu)$. Then there exist
  real constants $a^{\pm}_{n,m}$ such that
\[\lim_{N_1, \, N_2 \to \infty} \Bigl\| h(\xi,\eta) - \sum_{n=0}^{N_1} \sum_{m=1}^{N_2} a^{\pm}_{n,m} \, Z^{\pm}_{n,m}[\mu] \Bigr\|_{\xi_\mu} = 0.\]
\end{Theorem}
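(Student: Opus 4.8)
The plan is to transfer the problem to the scalar part $h_0=\Sc h$ and reconstruct $h$ from it. Since $h\in\Ker(\D+\lambda)$, applying $\D-\lambda$ and using the factorization of Proposition~\ref{prop:Factorization_Laplace_Operator} gives $(\L+\lambda^2)h=0$ componentwise, so $h_0$ is $\lambda$-metaharmonic. Working in the Cartesian picture $\Omega_{\xi_\mu}$, where $\L$ is the Laplacian $\Delta$ and $\langle\cdot,\cdot\rangle_{\xi_\mu}$ is the ordinary $L^2(\Omega_{\xi_\mu})$ product, the hypothesis $h\in\S_0(\xi_\mu)$ forces $h_0$ to vanish on the bounding ellipse; together these say that $h_0$ is a Dirichlet eigenfunction of $-\Delta$ on $\Omega_{\xi_\mu}$ with eigenvalue $\lambda^2$. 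I would also record that $h_0\in H^1_0(\Omega_{\xi_\mu})$: its trace vanishes by $\S_0$, and $\nabla h_0\in L^2$ because $\VEC h=-\tfrac1\lambda\D h_0\in L^2$.

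Next I would establish the recovery identity $h=-\tfrac1\lambda(\D-\lambda)h_0$, exactly as in the proof of Theorem~\ref{Theorem_completeness}: the right-hand side is $\lambda$-metamonogenic by Proposition~\ref{prop:Factorization_Laplace_Operator} (as $h_0$ is $\lambda$-metaharmonic) and has scalar part $h_0$ (since $\D h_0$ is purely vectorial), so it differs from $h$ by a $\lambda$-metamonogenic function with constant scalar part, which vanishes by Lemma~\ref{lemm:zeroscalar}.

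The core of the argument is an eigenvalue collapse. Expand $h_0=\sum a^\pm_{n,m}\zeta^\pm_{n,m}[\mu]$ by Proposition~\ref{prop:scalarboundaryzero}, where $a^\pm_{n,m}=\langle h_0,\zeta^\pm_{n,m}[\mu]\rangle_{\xi_\mu}/\|\zeta^\pm_{n,m}[\mu]\|_{\xi_\mu}^2$. Each $\zeta^\pm_{n,m}[\mu]$ is $\kappa_{n,m}$-metaharmonic with $\kappa_{n,m}=2\sqrt{q^\pm_{n,m}(\xi_\mu)}$ (Proposition~\ref{prop:zetahelmholtz}) and lies in $H^1_0$. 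Testing the weak form of $-\Delta h_0=\lambda^2 h_0$ against $\zeta^\pm_{n,m}[\mu]$ and that of $-\Delta\zeta^\pm_{n,m}[\mu]=\kappa_{n,m}^2\zeta^\pm_{n,m}[\mu]$ against $h_0$, the common Dirichlet form $\int_{\Omega_{\xi_\mu}}\nabla h_0\cdot\nabla\zeta^\pm_{n,m}[\mu]$ gives $(\lambda^2-\kappa_{n,m}^2)\langle h_0,\zeta^\pm_{n,m}[\mu]\rangle_{\xi_\mu}=0$. Hence $a^\pm_{n,m}=0$ except for the (finitely many, since Dirichlet eigenvalues have finite multiplicity) indices with $\kappa_{n,m}=|\lambda|$. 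For $\lambda>0$ these satisfy $\kappa_{n,m}=\lambda$, so the completion operators coincide and $Z^\pm_{n,m}[\mu]=-\tfrac1\lambda(\D-\lambda)\zeta^\pm_{n,m}[\mu]$; therefore $\sum a^\pm_{n,m}Z^\pm_{n,m}[\mu]$ is $\lambda$-metamonogenic with scalar part $h_0$ and, by Lemma~\ref{lemm:zeroscalar}, equals $h$. Thus the double series in fact terminates and the stated limit is attained. For $\lambda<0$ one has $\kappa_{n,m}=-\lambda$, and the surviving $Z^\pm_{n,m}[\mu]$ must be replaced by their conjugates (using $\overline{\Mat^\pm_n[\kappa]}=\Mat^\pm_n[-\kappa]$); equivalently, the case $\lambda<0$ follows from the case $-\lambda>0$ applied to $\overline h$.

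I expect the main obstacle to be making the collapse rigorous at the boundary. The cleanest route is to confirm $h_0,\zeta^\pm_{n,m}[\mu]\in H^1_0(\Omega_{\xi_\mu})$ and argue solely with the weak eigenvalue equations, so that no normal derivatives at $\xi=\xi_\mu$ need to be controlled; alternatively, one integrates Green's identity over $\Omega_{\xi_\mu-\epsilon}$ and lets $\epsilon\to0$, the boundary integrals vanishing because both $h_0$ and $\zeta^\pm_{n,m}[\mu]$ tend to $0$ on $\xi=\xi_\mu$. The orthogonality of Theorem~\ref{th:Zorthogonal} is not needed once the series is seen to terminate, but it confirms that the $a^\pm_{n,m}$ above are genuinely the $L^2$-expansion coefficients.
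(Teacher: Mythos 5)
Your proof is correct, and while it rests on the same two pillars as the paper's---reconstruction of $h$ from its scalar part via $-\tfrac1\lambda(\D-\lambda)$ together with Lemma \ref{lemm:zeroscalar}, and an eigenvalue collapse forced by the zero boundary data---it is organized along a genuinely different route. The paper assumes $f$ is orthogonal to every $Z^\pm_{n,m}[\mu]$, computes the full quaternionic inner product $\langle f,Z^\pm_{n,m}[\mu]\rangle_{\xi_\mu}$ from Proposition \ref{prop:mathieucomponents}, integrates by parts in elliptical coordinates (using each defining property of $\S_0(\xi_\mu)$ to kill a boundary term), and arrives at the one-sided factor $(2\sqrt{q_{n,m}}+\lambda)$ of \eqref{eq:zeroproduct}, finishing by triviality of the orthogonal complement with a case split at $\lambda=-2\sqrt{q_{n,m}}$. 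You never assume orthogonality: testing only the scalar parts against the weak $H^1_0$ eigenvalue equations yields the symmetric factor $(\lambda^2-\kappa_{n,m}^2)$, so the expansion of $h_0$ from Proposition \ref{prop:scalarboundaryzero} terminates (by finite multiplicity of Dirichlet eigenvalues) and $h$ is exhibited outright as a finite combination. Your route buys three things: a stronger conclusion (a finite sum, not merely an $L^2$-limit); immunity from the delicate final inference in the paper, where vanishing of the orthogonal complement inside the non-closed subspace $\M_2(\xi_\mu;\lambda)\cap\S_0(\xi_\mu)$ is used to deduce containment in the closed span; and a more transparent treatment of $\lambda<0$, where your identification of the surviving terms as $M^\pm_n[\lambda]=\overline{Z^\pm_{n,m}[\mu]}$ matches the paper's remark following its proof, whereas the paper's Case 2 asserts $Z^\pm_{n,m(n)}[\mu]=M^\pm_n[\lambda]$, a sign slip since $Z^\pm_{n,m(n)}[\mu]$ carries the positive parameter $2\sqrt{q_{n,m(n)}}=-\lambda$. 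What the paper's computation buys in exchange is that it stays entirely within the Mathieu/elliptical-coordinate framework with no Sobolev-space input. Your caution about the $\epsilon$-contour alternative is warranted---the flux of $\nabla h_0$ through shrinking contours requires a Fubini-type selection of good radii---so the weak formulation, legitimized by $\VEC h=-\tfrac1\lambda\,\D h_0\in L^2$ giving $h_0\in H^1_0$, is the clean choice.
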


\begin{proof} Consider a square-integrable $f \in \S_0(\xi_\mu)$ which is $\lambda$-metamonogenic. For
  convenience, we write $\zeta^\pm_{n,m} =\zeta^\pm_{n,m}[\mu]$ and
  analogously $\phi^\pm_{n,m}$, $\psi^\pm_{n,m}$.  By Definition
  \ref{Definition_zero-boundary_RQM-functions} and Proposition
  \ref{prop:mathieucomponents},
\begin{align*}
 \langle  f, Z^{\pm}_{n,m}[\mu]  \rangle_{R_{\xi_\mu}}  
  &=  \, \frac{\mu^2}{2} \int_0^{2\pi}\!\!\!\int_0^{\xi_\mu}
    f_0(\xi,\eta) \, \zeta^{\pm}_{n,m}(\xi,\eta) (\cosh2\xi-\cos2\eta) \, d\xi \, d\eta \\
  &\quad + \frac{\mu^2}{2\lambda \sqrt{q_{n,m}}} \int_0^{2\pi}\!\!\!\int_0^{\xi_\mu}
    \bigg( \frac{\partial f_0}{\partial \xi}(\xi,\eta) \psi^{\pm \; \prime}_{n,m}(\xi) \phi^\pm_{n,m}(\eta)\\
   &\quad\hspace{20ex} + \frac{\partial f_0}{\partial \eta}(\xi,\eta) \psi^\pm_{n}(\xi) \phi^{\pm \; \prime}_{n,m}(\eta) \bigg) \, d\xi \, d\eta.
\end{align*}
Assume $f$ is orthogonal to every element of the set \eqref{eq:RQMset} in the sense of \eqref{eq:InnerProduct}. We apply integration by parts:
\begin{align*}
 &0 = \frac{\mu^2}{2} \int_0^{2\pi}\!\!\!\int_0^{\xi_\mu} f_0(\xi,\eta) \, \zeta^\pm_{n,m}(\xi,\eta) (\cosh2\xi-\cos2\eta) \, d\xi \, d\eta \\
&\hspace{-0.15cm} + \frac{\mu^2}{2\lambda \sqrt{q_{n,m}}} \left( \int_0^{2\pi} \Bigl( \left.\frac{\partial f_0}{\partial \xi} \, \psi^\pm_{n,m}(\xi) \right|^{\xi_\mu}_{0} 
- \int_0^{\xi_\mu} \psi^\pm_{n,m}(\xi) \, \frac{\partial^2 f_0}{\partial \xi^2}(\xi,\eta) \, d\xi \Bigr) \, \phi^\pm_{n,m}(\eta) \, d\eta \right. \\
&\qquad \; \qquad + \left. \int_0^{\xi_\mu} \Bigl( \left.\frac{\partial f_0}{\partial \eta} \, \phi^\pm_{n,m}(\eta) \right|^{2\pi}_{0} 
- \int_0^{2\pi} \phi^\pm_{n,m}(\eta) \, \frac{\partial^2 f_0}{\partial \eta^2}(\xi,\eta) \, d\eta \Bigr) \, \psi^\pm_{n,m}(\xi) \, d\xi \right).
\end{align*}
Since $\psi^-_{n,m}(0,\eta)=0 = \psi^\pm_{n,m}(\xi_\mu,\eta)$ and $(\partial f_0/\partial \xi)(0,\eta)$ is an odd function,
\[\int_{0}^{2\pi} \phi^+_{n,m}(\eta) \frac{\partial f_0}{\partial \xi}(0,\eta) \, d\eta = 0.\]
Therefore
\[\int_0^{2\pi} \left.\frac{\partial f_0}{\partial \xi}(\xi,\eta) \, \psi^{+}_{n,m}(\xi) \right|^{\xi_\mu}_{0} \phi^+_{n,m}(\eta) \, d\eta = 0. \]
The corresponding integral with $\psi^{-}_{n,m}(\xi)$, $\phi^{-}_{n,m}(\eta)$ is trivially zero. Also, from the periodicity in $\eta$ it follows that
\[\left.\frac{\partial f_0}{\partial \eta}(\xi,\eta) \, \phi^\pm_{n,m}(\eta) \right|^{2\pi}_{0} = 0, \]
so
\begin{align*}
 0 =& \, \frac{\mu^2}{2} \int_0^{2\pi}\!\!\!\int_0^{\xi_\mu} f_0(\xi,\eta) \, \zeta^\pm_{n,m}(\xi,\eta) (\cosh2\xi-\cos2\eta) \, d\xi \, d\eta \\
&- \frac{\mu^2}{2\lambda \sqrt{q_{n,m}}} \int_0^{2\pi}\!\!\!\int_0^{\xi_\mu} \left( \frac{\partial^2 f_0}{\partial \xi^2}(\xi,\eta) + \frac{\partial^2 f_0}{\partial \eta^2}(\xi,\eta)\right) \psi^\pm_{n,m}(\xi) \phi^\pm_{n,m}(\eta) \, d\xi \, d\eta.
\end{align*}

Since $f_0$ is $\lambda$-metaharmonic,
\begin{equation} \label{eq:zeroproduct}
\left(2\sqrt{q_{n,m}} + \lambda\right) \int_0^{2\pi}\!\!\!\int_0^{\xi_\mu} f_0(\xi,\eta) \, \zeta^\pm_{n,m}(\xi,\eta) (\cosh2\xi-\cos2\eta) \, d\xi \, d\eta = 0.
\end{equation}
It is instructive to observe how each of the properties characterizing
elements of $\S_0(\xi_\mu)$ is used in the steps in arriving at this
formula.

Now suppose first that $\lambda \neq -2 \sqrt{q_{n,m}}$ for all $n,m$
(this automatically holds when $\lambda>0$).  Thus the integral of
\eqref{eq:zeroproduct} vanishes for all $(n,m)$, and then by
Proposition \ref{prop:scalarboundaryzero}, it follows that $f_0 = 0$.
Then by Lemma \ref{lemm:zeroscalar}, in fact, $f = 0$ identically.

On the other hand, assume that $\lambda = -2\sqrt{q_{n_0,m_0}}$
for at least one pair of indices $(n_0,m_0)$. In this situation,
the fact that \eqref{eq:zeroproduct} holds for all $(n,m)$ only implies that
$f_0 \in \mbox{Span}\{\zeta^{\pm}_{n,m}[\mu]: q_{n,
  m}=4\lambda^2\}$.  There is at most one $m$ for any given $n$
such that $q_{n, m}$ has a given value, so we can write the functions
under consideration as $\zeta^{\pm}_{n,m(n)}[\mu]$.  In other words,
we may write the scalar part as
\begin{equation*}
f_0 = \sum_n a^{\pm}_n \, \zeta^{\pm}_{n,m(n)}[\mu],
\end{equation*}
where the sum is over those values of $n$ for which there is an $m(n)$
satisfying $q_{n, m(n)}=4\lambda^2$. Define $g$ by
\eqref{Function_construction}. Following the reasoning of the proof of
Theorem \ref{Theorem_completeness}, we see that
$f=g=\sum a^\pm_nZ^{\pm}_{n,m(n)}[\mu]$, where in fact
$Z^{\pm}_{n,m(n)}[\mu]=M^\pm_n[\lambda]$. But as $f$ is orthogonal to
all $Z_{n,m}^\pm[\mu]$, we have again that $f=0$.

Recall that by construction, $Z_{n,m}^\pm[\mu]\in\S_0(\xi_\mu)$. We
have shown that any $f\in\S_0(\xi_\mu)\cap\M_2(\xi_\mu,\lambda)$, which
is orthogonal to all of the $Z_{n,m}^\pm[\mu]$, must vanish
identically. Thus
$\S_0(\xi_\mu)\cap\M_2(\xi_\mu,\lambda)\subseteq\mbox{Span}\{Z_{n,m}^\pm[\mu]\}$;
i.e., we can expand
$h= \sum_{n=0}^\infty\sum_{m=1}^\infty a^{\pm}_{n,m} \,
Z^{\pm}_{n,m}[\mu]$ as required.
\end{proof} 

The apparent asymmetry of positive and negative $\lambda$ in the proof of Theorem \ref{th:Zcomplete} is resolved by noting that the
conjugate functions $\overline{Z^\pm_{n,m}[\mu]}$ satisfy the same completeness property.

\medskip

\noindent\textit{Degeneration of zero-boundary RQMs as the ellipse
  tends to a disk.} As a consequence of Theorem 
\ref{th:Zcomplete}, all $\lambda$-metamonogenic functions in the
ellipse $\Omega_\mu$ with scalar part tending to zero at the boundary
can be expanded in terms of the expressions of the $Z^\pm_{n,m}[\mu]$
in Cartesian coordinates.  Since
$2\sqrt{q^\pm_{n,m}(\mu)}/\mu \to \alpha_{n,m}$ as $\mu\to0$, by
Definition \ref{Definition_zero-boundary_RQM-functions} and Theorem
\ref{theorem:mathieulimits}, we have the $\alpha_{n,m}$-metamonogenic
limit functions
\begin{equation} \label{eq:RQM_limiting_case}
   Z^{\pm}_{n,m}[0](w) = \lim_{\mu\to0}q_{n,m}[\mu]^{n/2}Z^{\pm}_{n,m}[\mu](\xi,\eta).
\end{equation}

\begin{Corollary}
  The set $\{Z^{\pm}_{n,m}[0]\colon n\ge0,\ m\ge1\}$ is an orthogonal
  Hilbert basis for the space of continuous functions on $B_1(0)$ with
  scalar part vanishing on the boundary circle, and their norms
  squared are given by
\begin{equation*}
\| Z^{\pm}_{n,m}[0] \|^2_{B_1(0)} = \beta^2_{n,m} J_{n+1}(\alpha_{n,m}) (1+\delta_{0,n})\pi.
\end{equation*}
\end{Corollary}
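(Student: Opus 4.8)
The plan is to reduce everything to the disk, where the metric is flat, after first making the limit functions explicit. By Lemma~\ref{lemm:qlimit} we have $2\sqrt{q^{\pm}_{n,m}(\xi_\mu)}/\mu\to\alpha_{n,m}$, so applying Theorem~\ref{theorem:mathieulimits} to \eqref{eq:RQM_limiting_case} identifies $Z^{\pm}_{n,m}[0]=F^{\pm}_n[\alpha_{n,m}]$; in particular its scalar part $\beta_nJ_n(\alpha_{n,m}|w|)\Phi^{\pm}_n(w)$ vanishes on the boundary circle $|w|=1$ exactly because $\alpha_{n,m}$ is a zero of $J_n$, and each $Z^{\pm}_{n,m}[0]$ is $\alpha_{n,m}$-metamonogenic. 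I would then work in polar coordinates $w=re^{i\theta}$, so that $\Phi^{+}_n=\cos n\theta$, $\Phi^{-}_n=\sin n\theta$ and $dA=r\,dr\,d\theta$.

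For the inner product $\langle Z^{\pm}_{n_1,m_1}[0],Z^{\pm}_{n_2,m_2}[0]\rangle_{B_1(0)}=\Sc\int_{B_1(0)}\overline{Z_1}\,Z_2\,dA$, the structural point is that the two vector coefficients $\ei\re w+\ej\im w$ and $\ei\im w-\ej\re w$ appearing in Theorem~\ref{theorem:mathieulimits} are pointwise orthogonal in $\R^2$ and each has squared length $|w|^2$. Hence all mixed vector terms cancel and the integrand splits into products of a purely angular factor and a purely radial one. The angular integrals $\int_0^{2\pi}\Phi^{\pm}_{n_1}\Phi^{\pm}_{n_2}\,d\theta$ annihilate everything unless $n_1=n_2$ and the $\pm$ signs agree; fixing such an $n$ and writing $\alpha_i=\alpha_{n,m_i}$, the surviving part (for $n\ge1$) is a multiple of $\int_0^1\bigl[J_n(\alpha_1r)J_n(\alpha_2r)+J_n'(\alpha_1r)J_n'(\alpha_2r)+\tfrac{n^2}{\alpha_1\alpha_2r^2}J_n(\alpha_1r)J_n(\alpha_2r)\bigr]r\,dr$, the $n^2$-term being absent when $n=0$. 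Integrating the second and third terms by parts and invoking Bessel's equation, with boundary contributions vanishing because $J_n(\alpha_i)=0$, collapses them onto $\tfrac{\alpha_1}{\alpha_2}\int_0^1J_n(\alpha_1r)J_n(\alpha_2r)\,r\,dr$, so the whole expression is a multiple of the Fourier--Bessel integral $\int_0^1J_n(\alpha_1r)J_n(\alpha_2r)\,r\,dr$, which vanishes for $m_1\ne m_2$.

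Setting $(n_1,m_1)=(n_2,m_2)$ produces the norm. The same integration by parts gives the diagonal identity $\int_0^1\bigl[J_n'(\alpha_{n,m}r)^2+\tfrac{n^2}{\alpha_{n,m}^2r^2}J_n(\alpha_{n,m}r)^2\bigr]r\,dr=\int_0^1J_n(\alpha_{n,m}r)^2\,r\,dr$, so the combined vector contribution equals the scalar one, and the Lommel evaluation $\int_0^1J_n(\alpha_{n,m}r)^2\,r\,dr=\tfrac12J_{n+1}(\alpha_{n,m})^2$ (using $J_n(\alpha_{n,m})=0$ and $J_n'(\alpha_{n,m})=-J_{n+1}(\alpha_{n,m})$) finishes the radial part. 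Multiplying by the angular normalizations $(1\pm\delta_{0,n})\pi$ and adding the scalar and vector pieces, one arrives at $\|Z^{\pm}_{n,m}[0]\|^2_{B_1(0)}=\beta_n^2(1+\delta_{0,n})\pi\,J_{n+1}(\alpha_{n,m})^2$, the asserted value.

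For completeness I would mirror the proof of Theorem~\ref{th:Zcomplete}, substituting polar for elliptical coordinates and replacing Proposition~\ref{prop:scalarboundaryzero} by the classical completeness of the Fourier--Bessel system $\{J_n(\alpha_{n,m}|w|)\Phi^{\pm}_n\}$ in $L^2(B_1(0))$. Given $\alpha>0$ and an $\alpha$-metamonogenic $F$ whose scalar part $F_0$ vanishes on the boundary and is orthogonal to every $Z^{\pm}_{n,m}[0]$, an integration by parts exactly in the spirit of the one producing \eqref{eq:zeroproduct}---using the boundary vanishing of $F_0$ and $(\Delta+\alpha^2)F_0=0$---rewrites the vector contribution of each inner product as a scalar one and yields $(\alpha+\alpha_{n,m})\langle F_0,\beta_nJ_n(\alpha_{n,m}|w|)\Phi^{\pm}_n\rangle=0$. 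Since $\alpha,\alpha_{n,m}>0$, all Fourier--Bessel coefficients of $F_0$ vanish, so $F_0=0$ and hence $F=0$ by Lemma~\ref{lemm:zeroscalar}; together with the norms above this exhibits the $Z^{\pm}_{n,m}[0]$ as an orthogonal Hilbert basis. The step I expect to be the main obstacle is precisely this family of Bessel identities: showing that the scalar and the two vector radial integrals collapse---through integration by parts and Bessel's equation---onto the single weighted integral $\int_0^1J_n(\alpha_1r)J_n(\alpha_2r)\,r\,dr$, where the vanishing of the boundary terms hinges on each $\alpha_{n,m}$ being a zero of $J_n$ and the $n=0$ case requires separate bookkeeping of the angular normalization.
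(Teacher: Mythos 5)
Your proposal is correct, and while its skeleton coincides with the paper's (identify $Z^\pm_{n,m}[0]=F^\pm_n[\alpha_{n,m}]$ via Lemma \ref{lemm:qlimit} and Theorem \ref{theorem:mathieulimits}, note that the two vector coefficients are pointwise orthogonal so the inner product splits into angular times radial integrals, and kill $n_1\neq n_2$ by trigonometric orthogonality), your treatment of the radial integrals is genuinely different from the paper's and arguably cleaner. The paper rewrites $J_n'J_n'+\tfrac{n^2}{\alpha_1\alpha_2\rho^2}J_nJ_n$ using the order-shifting recurrences $2J_n'=J_{n-1}-J_{n+1}$ and $\tfrac{2n}{u}J_n=J_{n-1}+J_{n+1}$ (misprinted in the text as $J_{n+1}\mp J_{n+1}$), obtaining $\tfrac12 J_{n-1}J_{n-1}+J_nJ_n+\tfrac12 J_{n+1}J_{n+1}$ and then invoking the cross-product (Lommel) orthogonality relation; there the boundary contributions of the three integrals only cancel in the aggregate, since the $\alpha_{n,m_i}$ are zeros of $J_n$ but not of $J_{n\pm1}$. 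Your Sturm--Liouville route --- integrate $\int_0^1 f'g'\,r\,dr$ by parts and use Bessel's equation, with the single boundary term dying because $J_n(\alpha_{n,m_i})=0$ --- collapses the vector contribution onto $\tfrac{\alpha_1}{\alpha_2}\int_0^1 J_n(\alpha_1 r)J_n(\alpha_2 r)\,r\,dr$ in one stroke, handles off-diagonal vanishing and the diagonal identity simultaneously, and via the Lommel evaluation yields $\|Z^\pm_{n,m}[0]\|^2_{B_1(0)}=\beta_n^2(1+\delta_{0,n})\pi\,J_{n+1}(\alpha_{n,m})^2$; this corroborates that the unsquared $J_{n+1}(\alpha_{n,m})$ in the Corollary's statement is a typo (as is the subscript $\beta_{n,m}$, since only $\beta_n$ is defined). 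Finally, you actually prove more than the paper does: its proof establishes only orthogonality and the norms, leaving the Hilbert-basis claim implicit, whereas you mirror the argument of Theorem \ref{th:Zcomplete}, replacing Proposition \ref{prop:scalarboundaryzero} by classical Fourier--Bessel completeness to get $(\alpha+\alpha_{n,m})\langle F_0,\beta_nJ_n(\alpha_{n,m}|w|)\Phi^\pm_n\rangle=0$, hence $F_0=0$ and $F=0$ by Lemma \ref{lemm:zeroscalar}. What the paper's recurrence route buys is a formulation entirely in terms of tabulated Bessel identities with no differentiation under the integral; what yours buys is shorter bookkeeping, transparent vanishing of boundary terms, and an explicit completeness argument.
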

\begin{proof}
  Suppose that $(n_1,m_1) \neq (n_2,m_2)$. By Theorem
  \ref{theorem:mathieulimits} and equation
  \eqref{eq:RQM_limiting_case}, direct computations show that
\begin{align*}
&\langle Z^{\pm}_{n_1,m_1}[0], Z^{\pm}_{n_2,m_2}[0] \rangle_{B_1(0)} \\
=& \, \beta_{n_1,m_1} \beta_{n_2,m_2} \int_{0}^{2\pi} \phi_{n_1}^{\pm}(\eta) \phi_{n_2}^{\pm}(\eta) \, d\eta \\
&\times \int_{0}^{1} \Bigl(J_{n_1}(\alpha_{n_1,m_1}\rho) J_{n_2}(\alpha_{n_2,m_2}\rho) + J^{\prime}_{n_1}(\alpha_{n_1,m_1}\rho) J^{\prime}_{n_2}(\alpha_{n_2,m_2}\rho) \Bigr) \rho \, d\rho \\
&+ \frac{n_1 n_2 \beta_{n_1,m_1} \beta_{n_2,m_2}}{\alpha_{n_1,m_1} \alpha_{n_2,m_2}} \int_{0}^{2\pi} \phi_{n_1}^{\mp}(\eta) \phi_{n_2}^{\mp}(\eta) \, d\eta 
\int_{0}^{1} J_{n_1}(\alpha_{n_1,m_1}\rho) J_{n_2}(\alpha_{n_2,m_2}\rho) \frac{1}{\rho} \, d\rho \\
=& \, 0
\end{align*}
when $n_1 \neq n_2$.

Now, let $n_1=n_2=n$. Using the well known recurrence relations for the Bessel functions
\begin{align*}
2J^{\prime}_{n}(u) &= J_{n+1}(u) - J_{n+1}(u), \\
\frac{2n}{u} J_{n}(u) &= J_{n+1}(u) + J_{n+1}(u),
\end{align*}
and the orthogonality relation
\begin{align*}
&(\alpha^2_{n,m_2} - \alpha^2_{n,m_1}) \int J_{n}(\alpha_{n,m_1}\rho) J_{n}(\alpha_{n,m_2}\rho) \rho \, d\rho \\
&= \rho \Bigl( \alpha_{n,m_1} J_{n-1}(\alpha_{n,m_1}\rho) J_{n}(\alpha_{n,m_2}\rho) - \alpha_{n,m_2} J_{n}(\alpha_{n,m_1}\rho) J_{n-1}(\alpha_{n,m_2}\rho) \Bigr)
\end{align*}
we find
\begin{align*}
&\langle Z^{\pm}_{n,m_1}[0], Z^{\pm}_{n,m_2}[0] \rangle_{B_1(0)} \\
=& \, \beta_{n,m_1} \beta_{n,m_2} (1+\delta_{0,n}) \pi \Bigl( \frac{1}{2} \int_{0}^{1} J_{n-1}(\alpha_{n,m_1}\rho) J_{n-1}(\alpha_{n,m_2}\rho) \rho \, d\rho \\
&+ \int_{0}^{1} J_{n}(\alpha_{n,m_1}\rho) J_{n}(\alpha_{n,m_2}\rho) \rho \, d\rho + \frac{1}{2} \int_{0}^{1} J_{n+1}(\alpha_{n,m_1}\rho) J_{n+1}(\alpha_{n,m_2}\rho) \rho \, d\rho \Bigr) \\
=& \, 0
\end{align*}
for $m_1 \neq m_2$. The norms are obtaining by equating the indexes.
\end{proof}


\section{Time-dependent metaharmonic and metamonogenic functions\label{sec:application}}

The $\lambda$-RQM functions relate to some partial differential
equations of mathematical physics. We will examine perhaps the
simplest possible example here. Consider the imaginary-time wave
equation $(\Delta + K^2\frac{\partial^2}{\partial t^2})V=0$ for an
${\R^3}$-valued function $V(x,y,t)$ in an elliptical domain
$\Omega_{\xi_0}$. This differs from the standard wave equation by
replacing $K^2$ with $-K^2$, i.e.\ applying the Wick transformation
$t=i\tau$ to the time variable
\cite{Burgess2003}.
Converting the spatial variables to elliptical
coordinates, we have the equivalent form
\begin{align} \label{eq:helmholtzK}
  \big(\L + K^2\frac{\partial^2}{\partial t^2} \big) v = 0,
\end{align}
where $\L$ is defined by \eqref{eq:defLmu}.  

The time-dependent solution $v = v(\xi,\eta,t)$ of
\eqref{eq:helmholtzK} should satisfy at least
$v(\cdot,\cdot,t)\in\S(\xi_0)$ for every $t\ge0$. The point of
interest is to examine the natural reduced-quaternionic extensions of
the real-valued solutions. One may note the analogy to the process of
holomorphic extension of the classical trigonometric and related
functions from the real to the complex domain. Since the operator
$\L + K^2 (\partial^2/\partial t^2)$ has only real ingredients, it
applies to $v=v_0 + \ei v_1 + \ej v_2$ by operating independently on
each component.

\def\uv{\underline{v}}

We consider the boundary value problem for solutions with scalar part
vanishing at the boundary $\Omega_{\xi_0}$. Thus we assume an
$\R$-valued initial condition $\uv(\xi,\eta)\in\S_0(\xi_0)$. One finds
coefficients $a^\pm_{n,m}\in\R$ according to Proposition
\ref{prop:scalarboundaryzero}, representing this continuous function as
\begin{align}  \label{eq:vexpansion}
  \uv = \sum_{n=0}^\infty\sum_{m=1}^\infty a^\pm_{n,m} \zeta^\pm_{n,m} . 
\end{align}
Introduce the constants
\begin{align}  \label{eq:defomega}
  \omega_{n,m} = \frac{2\sqrt{q_{n,m}(\mu_0)}}{K},
\end{align}
where the functions $q_{n,m}$ given in Section
\ref{Zero-boundary_RQM-functions} are applied to $\mu_0=1/\cosh\xi_0$,
cf.\ \eqref{eq:defximu}.  

\begin{Theorem} \label{theo:wavefunction} The function $v$ given by
\begin{equation} \label{eq:totalwavefunction}
v(\xi,\eta,t) =
 \sum_{n=0}^{\infty} \sum_{m=1}^{\infty} a^\pm_{n,m}\,
  Z^{\pm}_{n,m}[\mu_0](\xi,\eta)\, e^{\omega_{n,m} t}         
\end{equation}   is a reduced-quaternionic solution of the imaginary-time wave equation \eqref{eq:helmholtzK} in $\S_0(\xi_0)$, where
 \[a^\pm_{n,m} = \frac{\displaystyle \Sc \int_{0}^{2\pi} \!\!
     \int_{0}^{\xi_0} \overline{Z^{\pm}_{n,m}[\mu_0]}(\xi,\eta)
     \uv(\xi,\eta) \mu^2_0 (\cosh^2 \xi - \cos^2\eta) \,d\xi d\eta}{\|
     Z^{\pm}_{n,m}[\mu_0]\|^2_{\xi_0}}.\] In fact, $v$ is the only
 real-analytic solution of \eqref{eq:helmholtzK} whose
 scalar part $v_0$ at time $t=0$ is equal to $\uv$ and
 \begin{align} \label{eq:vtime0}
   \left.\frac{\partial}{\partial t} \right|_{t=0} v_0 =
   \sum_{n=0}^\infty \sum_{m=1}^\infty a^\pm_{n,m} \, \omega^\pm_{n,m} \, \zeta^\pm_{n,m}[\mu_0].
 \end{align}
  Further, $v$ is a solution of the
 time-dependent Moisil-Teodorescu equation
  \begin{align}    \label{eq:timedepmetamonog}
     \big(\D + K\frac{\partial}{\partial t} \big )v=0 .
  \end{align}
 \end{Theorem}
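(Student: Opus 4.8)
The plan is to reduce the whole statement to two eigen-relations for each building block $Z^{\pm}_{n,m}[\mu_0]$ together with the boundary vanishing of its scalar part. Writing $\lambda_{n,m}=2\sqrt{q_{n,m}(\mu_0)}$, I recall from Definition \ref{Definition_zero-boundary_RQM-functions} and Proposition \ref{prop:mathieucomponents} that $Z^{\pm}_{n,m}[\mu_0]=\Mat^{\pm}_n[\lambda_{n,m}]$ is $\lambda_{n,m}$-metamonogenic, so $\D Z^{\pm}_{n,m}[\mu_0]=-\lambda_{n,m}Z^{\pm}_{n,m}[\mu_0]$; applying $\D$ once more and using $\D^2=-\L$ (which drops out of Proposition \ref{prop:Factorization_Laplace_Operator}), the scalar operator $\L$ acts componentwise as $\L Z^{\pm}_{n,m}[\mu_0]=-\lambda_{n,m}^2 Z^{\pm}_{n,m}[\mu_0]=-4q_{n,m}Z^{\pm}_{n,m}[\mu_0]$. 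Moreover $\Sc Z^{\pm}_{n,m}[\mu_0]=\zeta^{\pm}_{n,m}$ vanishes at $\xi=\xi_0$, since $\psi^{\pm}_n[q_{n,m}](\xi_0)=0$ by the very definition of the zeros $q^{\pm}_{n,m}(\xi_{\mu_0})$, and $\xi_{\mu_0}=\xi_0$. The definition \eqref{eq:defomega} is arranged precisely so that $K\omega_{n,m}=\lambda_{n,m}$.

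With these relations in hand I would verify both equations term by term. For a single summand $u_{n,m}=Z^{\pm}_{n,m}[\mu_0](\xi,\eta)\,e^{\omega_{n,m}t}$ one finds $(\D+K\partial_t)u_{n,m}=(-\lambda_{n,m}+K\omega_{n,m})u_{n,m}=0$, which is \eqref{eq:timedepmetamonog} for each term. The imaginary-time wave equation \eqref{eq:helmholtzK} then comes for free: since $\D$ commutes with $\partial_t$, one has $(\D-K\partial_t)(\D+K\partial_t)=\D^2-K^2\partial_t^2=-(\L+K^2\partial_t^2)$, so every solution of \eqref{eq:timedepmetamonog} solves \eqref{eq:helmholtzK} (one may also check $(\L+K^2\partial_t^2)u_{n,m}=(-4q_{n,m}+K^2\omega_{n,m}^2)u_{n,m}=0$ directly). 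Membership in $\S_0(\xi_0)$ at each fixed $t$ is inherited from the $Z^{\pm}_{n,m}[\mu_0]$: the symmetries (a)--(c) pass to the sum, and $v_0(\cdot,\cdot,t)=\sum a^{\pm}_{n,m}\zeta^{\pm}_{n,m}\,e^{\omega_{n,m}t}$ vanishes on $\xi=\xi_0$ termwise.

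Next I would pass from these termwise identities to the series and match the data. Setting $t=0$ recovers $v_0(\cdot,\cdot,0)=\sum a^{\pm}_{n,m}\zeta^{\pm}_{n,m}=\uv$, namely the expansion \eqref{eq:vexpansion} furnished by Proposition \ref{prop:scalarboundaryzero}, while differentiating the time factor once gives \eqref{eq:vtime0}. The integral formula for $a^{\pm}_{n,m}$ follows from orthogonality (Theorem \ref{th:Zorthogonal}): because $\uv$ is real-valued, $\Sc(\overline{Z^{\pm}_{n,m}[\mu_0]}\,\uv)=\zeta^{\pm}_{n,m}\,\uv$, so pairing $\uv$ against $Z^{\pm}_{n,m}[\mu_0]$ in \eqref{eq:InnerProduct} collapses to the scalar pairing of $\uv$ with $\zeta^{\pm}_{n,m}$, and dividing by the appropriate squared norm isolates the coefficient. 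For uniqueness I would first handle the scalar part: $v_0$ solves $(\L+K^2\partial_t^2)v_0=0$ with the two Cauchy data $v_0(\cdot,\cdot,0)=\uv$ and $\partial_t v_0(\cdot,\cdot,0)$ given by \eqref{eq:vtime0}; expanding in the complete orthogonal system $\{\zeta^{\pm}_{n,m}\}$ decouples this into the scalar ODEs $T''=\omega_{n,m}^2T$, and the matched data $T'(0)=\omega_{n,m}T(0)$ select the pure exponential $T(t)=a^{\pm}_{n,m}e^{\omega_{n,m}t}$, so $v_0$ is uniquely determined. The full reduced-quaternionic $v$ is then pinned down within the class of solutions of \eqref{eq:timedepmetamonog}: for two such solutions with equal scalar part the difference $w$ satisfies $(\D+K\partial_t)w=0$ with $w_0\equiv0$, and I would aim to force $w=0$ via Lemma \ref{lemm:zeroscalar}.

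I expect two genuine obstacles, and they are analytic rather than algebraic. First, because the Wick rotation replaces oscillatory time factors by the growing exponentials $e^{\omega_{n,m}t}$ with $\omega_{n,m}\to\infty$ as $m\to\infty$, the series for $v$ and for its $\partial_t$- and $\D$-derivatives need not converge for $t>0$ unless the coefficients $a^{\pm}_{n,m}$ decay super-exponentially; justifying the termwise differentiation (hence the very existence of $v$) reflects the ill-posedness of the imaginary-time wave equation and explains the real-analyticity hypothesis. I would control this by uniform convergence on compact subsets of $\Omega_{\xi_0}\times[0,T]$ through Proposition \ref{prop:convergence} and the completeness of Theorem \ref{th:Zcomplete}, tacitly restricting to data $\uv$ whose expansion coefficients decay fast enough. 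Second, and more delicate, is making the uniqueness statement precise: the scalar Cauchy data alone do \emph{not} constrain the vector parts, and Lemma \ref{lemm:zeroscalar} (which requires a fixed nonzero eigenvalue) does not by itself exclude time-independent monogenic remainders with vanishing scalar part. I therefore expect uniqueness to hold only within the class of solutions lying in the closed span generated by the $Z^{\pm}_{n,m}[\mu_0]$, and identifying this class carefully is where I anticipate the real work.
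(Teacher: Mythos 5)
Your constructive half is essentially the paper's proof: the paper verifies \eqref{eq:timedepmetamonog} termwise from the fact that $Z^{\pm}_{n,m}[\mu_0]$ is $K\omega_{n,m}$-metamonogenic (this is exactly how \eqref{eq:defomega} is rigged), obtains \eqref{eq:helmholtzK} from the factorization $\L+K^2\partial_t^2=-(\D-K\partial_t)(\D+K\partial_t)$, and passes to the series with a bare appeal to Proposition \ref{prop:convergence}. Your caveat about the growing factors $e^{\omega_{n,m}t}$ is well taken --- the paper imposes no coefficient decay and does not justify termwise differentiation for $t>0$ any more carefully than you do. One small point you implicitly handle better than the theorem's own display: once the pairing collapses to $\int \zeta^{\pm}_{n,m}\,\underline{v}$ (since $\underline{v}$ is real-valued and scalar), the ``appropriate squared norm'' isolating the coefficient of \eqref{eq:vexpansion} is $\|\zeta^{\pm}_{n,m}[\mu_0]\|^2_{\xi_0}$, not the larger quaternionic norm $\|Z^{\pm}_{n,m}[\mu_0]\|^2_{\xi_0}$ appearing in the theorem's denominator.

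The divergence is in the uniqueness argument, and here your proposal is incomplete in one place and prescient in another. For the scalar part the paper does not decompose into modes: it sets $w=v-u$, notes that $(\L+K^2\partial_t^2)w_0=0$ with both Cauchy data vanishing at $t=0$, and invokes the Cauchy--Kovalevskaya theorem to get $w_0\equiv0$; this is the actual role of the real-analyticity hypothesis, which you attributed instead to series convergence. Your modal route $T''=\omega_{n,m}^2T$ would also work, but it additionally requires the competitor's scalar part to remain in the zero-boundary class for all $t>0$ in order to justify the $\zeta^{\pm}_{n,m}$-expansion and the integrations by parts --- an assumption the theorem does not make, and which Cauchy--Kovalevskaya sidesteps entirely. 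For the vector part you stop exactly short of the paper's one genuinely clever step, so let me name it: from $\D w=-K\,\partial_t w$ with $w_0\equiv0$, left multiplication by $\ei,\ej$ carries $\ei\R\oplus\ej\R$ into $\R\oplus\ek\R$, so the left side takes values in $\R\oplus\ek\R$ while the right side takes values in $\ei\R\oplus\ej\R$; these subspaces meet only in $\{0\}$, forcing $\D w=0$ \emph{and} $\partial_t w=0$ separately. This reduces matters to a time-independent monogenic $w$ with vanishing scalar part --- precisely the residual case you anticipated. And your distrust of the endgame is justified: the paper then cites Lemma \ref{lemm:zeroscalar}, which is stated only for $\lambda\in\R\setminus\{0\}$ and fails at $\lambda=0$ (a constant $w=\ei c$ is monogenic with zero scalar part, solves both \eqref{eq:helmholtzK} and \eqref{eq:timedepmetamonog}, and has vanishing scalar Cauchy data). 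So the ``delicate point'' you flagged is a genuine soft spot in the paper's own proof, not a defect peculiar to your plan; a fully correct statement needs either a normalization excluding such remainders or a uniqueness claim restricted to the scalar part (equivalently, uniqueness modulo time-independent monogenic functions with zero scalar part), in the spirit of your closed-span restriction.
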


\begin{figure}[p!]
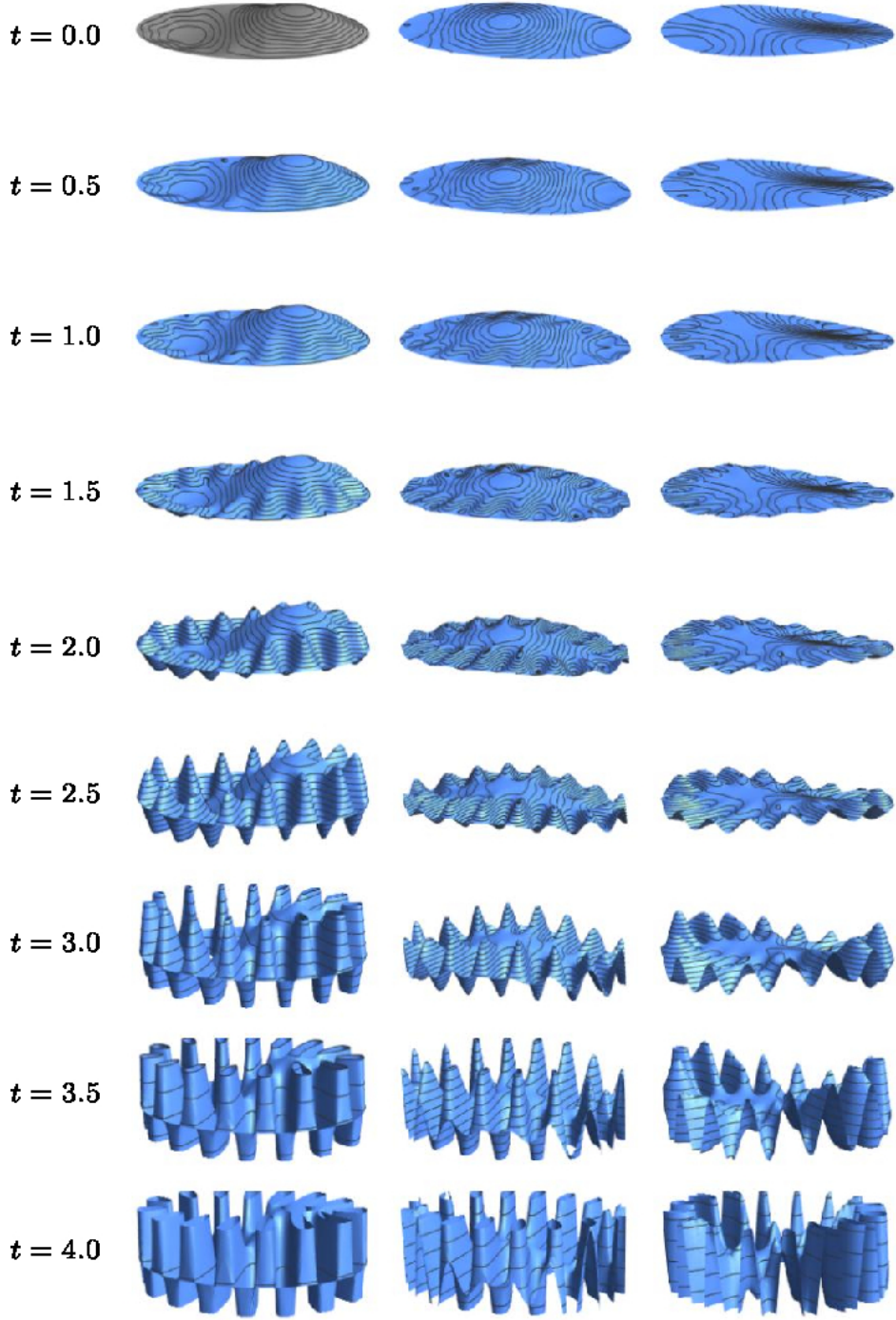

  \pic{figwave}{.2,-24.4}{17}{scale=.505}
  \caption{Time evolution of wave function \eqref{eq:vexpansion} in an ellipse of eccentricity $\mu_0=0.7$.  Here $v$ is defined by coefficients
$a^+_{n,0}=a^+_{n,1}=a^-_{n,1}=1$, $a^+_{n,15}=a^-_{n,15}=0.0001$, and
all other $a^\pm_{n,m}=0$ and wave parameter $K=10$.  The initial condition $\uv$ created by these parameters is shown
in the upper left of the figure.
\label{fig:wave}}
\end{figure}

\begin{proof}
  First we show \eqref{eq:timedepmetamonog}. In analogy to \eqref{eq:Factorization},
  \begin{align*} 
    \L + K^2\frac{\partial^2}{\partial t^2} =
    -\big(\D-K\frac{\partial}{\partial t}\big)
     \big(\D+K\frac{\partial}{\partial t}\big).
  \end{align*}
But 
\begin{align*}
  \big(\D+K\frac{\partial}{\partial t}\big)
  Z^{\pm}_{n,m}[\mu_0](\xi,\eta)\, e^{\omega_{n,m} t} &=
  (\D + K\omega_{n,m}) Z^{\pm}_{n,m}[\mu_0] e^{\omega_{n,m} t} =0
\end{align*}
because $Z^{\pm}_{n,m}[\mu_0]$ is $K\omega_{n,m}$-metamonogenic by
\eqref{eq:defomega}. Substitute this into \eqref{eq:totalwavefunction}
to obtain \eqref{eq:timedepmetamonog}, in the light of Proposition
\ref{prop:convergence}. It follows automatically that $v$ satisfies \eqref{eq:helmholtzK} as well.

By construction, $v_0(\xi,\eta,0)=\uv(\xi,\eta)$, and it is easy to
verify \eqref{eq:vtime0}. Suppose $u(\xi,\eta,t)$ is another solution
sharing the property $u_0(\xi,\eta,0)=\uv(\xi,\eta)$ and with
$(\partial u_0/\partial t)_{t=0}$  equal to the right-hand side of
\eqref{eq:vtime0}.  Consider the difference $w=v-u$, whose scalar part
$w_0(\xi,\eta,t)$ vanishes identically at time $t=0$. The
reduced-quaternionic equation
$(\L + K^2\,\partial^2/\partial t^2)w = 0$ tells us that in particular
\[ (\L + K^2\frac{\partial^2}{\partial t^2})w_0 = 0 ,\] which together
with the vanishing initial conditions and the fact that $w_0$ is
real-analytic implies by the Cauchy-Kovalevskaya theorem
\cite{Petrovsky1954} that $w_0(\xi,\eta,t)=0$ for all $t>0$ and
all $(\xi,\eta)\in R_{\xi_0}$.

Recalling \eqref{eq:timedepmetamonog}, we see that
$\D w = -K\,\partial w/\partial t$.  Since $w_0=0$, the left-hand side
of this equality lies in the subspace $\R\oplus\ek\R$  of $\H$  while
the right-hand side is in $\ei\R\oplus\ej\R$. It follows that $\D w=0$
with $w$ independent of $t$. By Lemma \ref{lemm:zeroscalar}, we have
$w=0$, which yields the uniqueness of $v$.
\end{proof}

\begin{table}[!t]
\begin{center}
 \begin{tabular}{r |r@{.}l r@{.}l|r@{.}l r@{.}l }
    $m$ & \multicolumn{2}{c}{$q^+_{0m}$}  &\multicolumn{2}{c|}{$q^-_{0m}$} &
  \multicolumn{2}{c}{$\omega^+_{0m}$} &\multicolumn{2}{c}{$\omega^-_{0m}$}\\ \hline
  1 & 2&21929 & 3&08131 & 0&297946 & 0&351073 \\
  2 & 3&91836 & 4&7426 & 0&395897 & 0&43555 \\
  3 & 6&14002 & 6&85948 & 0&495581 & 0&523812 \\
  4 & 8&86358 & 9&43514 & 0&595435 & 0&614333 \\
  5 & 12&0539 & 12&4629 & 0&694373 & 0&706057 \\
  6 & 15&6683 & 15&9304 & 0&791664 & 0&798259 \\
  7 & 19&6714 & 19&8229 & 0&887049 & 0&890458 \\
  8 & 24&0454 & 24&1259 & 0&980723 & 0&982362 \\
  9 & 28&787 & 28&827 & 1&07307 & 1&07382 \\
  10 & 33&8979 & 33&9168 & 1&16444 & 1&16476 \\
  11 & 39&3799 & 39&3885 & 1&25507 & 1&25521 \\
  12 & 45&2335 & 45&2373 & 1&34512 & 1&34517 \\
  13 & 51&4582 & 51&4599 & 1&43469 & 1&43471 \\
  14 & 58&053 & 58&0537 & 1&52385 & 1&52386 \\
  15 & 65&0165 & 65&0168 & 1&61266 & 1&61266  
  \end{tabular}
\end{center}
\caption{Values of parameters $q^\pm_{mn}$, $\omega^\pm_{mn}$ used in example of
  a function \eqref{eq:vexpansion}. Only $m=0$, $1$, and $15$ appear with nonzero coefficients $a^\pm_{mn}$. This permits appreciating the effect of the larger
  exponents $\omega^\pm_{15,1}$ which give dominating terms for larger values
  of $t$ as shown in the lower rows of Figure \ref{fig:wave}.
  \label{tab:wave}}
\end{table}

One may use $\uv$ to generate a function in $R_{\xi_1}$ for arbitrary
$\xi_1>0$ simply by rescaling the $\xi$-variable, that is,
$\uv[\mu_1](\xi,\eta)=\uv[\mu_1]((\xi_1/\xi_0)\xi,\eta)$ where
$\mu_1=1/\cosh\xi_1$.  The solutions $v[\mu]$ of \eqref{eq:helmholtzK}
given by Theorem \ref{theo:wavefunction} form a one-parameter family
of functions whose limit as $\mu\to0$ may be obtained via Theorem
\ref{theorem:mathieulimits}.

An example of a wave function \eqref{eq:vexpansion} is given in Figure
\ref{fig:wave}, and the relevant coefficients $q^\pm_{mn}$,
$\omega^\pm_{mn}$ are listed in Table \ref{tab:wave}.


\section{Concluding remarks}

We have introduced a generalization of the classical
$\lambda$-metaharmonic Mathieu functions \cite{Mathieu1868} within the
reduced-quaternionic analysis setting, called $\lambda$-RQM
functions. The $\lambda$-RQM functions are orthogonal to one another
with respect to the $L^2$-norm over confocal ellipses for the same
Mathieu parameter $q$ and form a complete system in the Hilbert space
of square-integrable $\lambda$-metamonogenic functions. Further, we
have studied the zero-boundary $\lambda$-RQM-functions, which are
$\lambda$-RQM functions whose scalar part vanishes on the boundary of
the ellipse. The limiting values of the $\lambda$-RQM functions as the
eccentricity of the ellipse tends to zero are expressed in terms of
Bessel functions of the first kind and form a complete orthogonal
system for $\lambda$-metamonogenic functions with respect to the
$L^2$-norm on the unit disk. The fundamental properties of the
zero-boundary $\lambda$-RQM functions and their connections to the
time-dependent solutions of the imaginary-time wave equation in the
elliptical coordinate system were also discussed. Examples of the
$\lambda$-RQM functions were calculated using \textit{Mathematica}.
 
When the idea of $\lambda$-RQM functions is applied to the ordinary
(real-time) wave equation obtained by using $-K^2$ in place of $K^2$,
one finds that the complex-valued operator $D+i(\partial/\partial t)$
appears. While the functions we have studied here have been expressed
for simplicity in the context of real quaternions, all results extend
immediately to reduced complex quaternions.

It is of interest to explore other properties of the imaginary-time
wave function \eqref{eq:totalwavefunction} and its transition to
circular membranes in more detail. We will extend our results in a
future paper to Mathieu functions taking values in the space of full
quaternions (with a right $\H$-linear structure), using the
theoretical basis presented in \cite{LuPeRoSha2013,Morais2014}.
 
The $\lambda$-RQM functions give an elegant generalization of the
classical Mathieu functions inspired by the factorization of the
imaginary-time wave operator \eqref{eq:helmholtzK}. We hope that this
knowledge will enrich the understanding of such functions and
Moisil-Teodorescu operators in higher dimensions and serve as tools in
the application of quaternionic analysis to mathematical physics and
related fields.


\newcommand{\authors}[1]{\textsc{#1}}
\newcommand{\booktitle}[1]{\textit{#1}}
\newcommand{\articletitle}[1]{``#1''}
\newcommand{\journalname}[1]{\textit{#1}}
\newcommand{\volnum}[1]{\textbf{#1}}

\end{document}